\documentclass[12pt]{amsart}
\usepackage[margin=1in]{geometry}
\usepackage{amsmath}
\usepackage{amssymb}
\usepackage{amsfonts}
\usepackage{amsthm}
\usepackage{mathrsfs}
\usepackage{enumerate}
\usepackage[hidelinks]{hyperref}

\hypersetup{
    pdftitle={Finite Good Witnesses for Generalized Curve Projections at the Rectifiable Endpoint},
    pdfauthor={Caleb Marshall}
}

\newtheorem{theorem}{Theorem}[section]
\newtheorem{proposition}[theorem]{Proposition}
\newtheorem{lemma}[theorem]{Lemma}
\newtheorem{corollary}[theorem]{Corollary}

\theoremstyle{definition}
\newtheorem{definition}[theorem]{Definition}
\newtheorem{example}[theorem]{Example}
\newtheorem{remark}[theorem]{Remark}

\numberwithin{equation}{section}

\DeclareMathOperator{\vspan}{Span}
\DeclareMathOperator{\rank}{rank}
\DeclareMathOperator{\Aff}{Aff}

\begin{document}
\title[Finite Good Witnesses for Generalized Curve Projections]{Finite Good Witnesses for Generalized Curve Projections at the Rectifiable Endpoint}

\author[C.~Marshall]{Caleb Marshall}
\address{Department of Mathematics \\ The University of Toronto, St. George Campus, Toronto, ON, Canada}
\email{caleb.marshall@utoronto.ca}

\date{August 10, 2026}

\begin{abstract}
For a $1$-rectifiable set $E \subset \mathbb{R}^d$ of positive length, a theorem of Federer shows that, among any $d$ linearly independent orthogonal projections of $E$, at least one has positive length. We develop a version of this finite-witness principle for generalized curve projections.

Given scalar-valued mappings
$
\varphi_1,\ldots,\varphi_d:\mathbb{R}^d\longrightarrow\mathbb{R},
$
we introduce the canonical encoding map
$
\mathsf{H}:=(\varphi_1,\ldots,\varphi_d).
$
Where $D\mathsf{H}$ is invertible, a local bilipschitz change of variables and Federer's theorem show that $\varphi_j(E)$ has positive length for some $j$. If a positive-length portion of $E$ lies in the critical set, we instead argue intrinsically on a $C^1$ hypersurface containing it, provided that $\mathsf{H}$ retains full tangential rank there. This yields an abundance of deterministic finite witnesses, as well as structural bounds for those exceptional parameters where the good witness property fails. A new fold non-degeneracy condition makes our constructions stable under perturbations of the underlying parameters. At this rectifiable endpoint, these conclusions complement work of Peres--Schlag, which developed exceptional set estimates for fractal sets of Hausdorff dimension strictly greater than one.

We then apply our framework to several nonlinear projection families. Affinely independent pinned squared-distance maps satisfy the tangential and fold conditions, whereas two planar radial projections lose tangential rank along their critical line. We also verify the hypotheses for more exotic examples, including nonlinear anisotropic distances, Bregman functionals arising from smooth approximations of polyhedral norms, and families whose critical hypersurfaces have prescribed $C^2$ geometry.
\end{abstract}

\subjclass[2020]{Primary 28A75; Secondary 28A78}
\keywords{rectifiable sets, generalized projections, projection theorems, exceptional set estimates, fold singularities}

\maketitle

\section{Introduction}

Let $d \geq 2$ and let $\mathcal{H}^1$ denote the $1$-dimensional Hausdorff measure on $\mathbb{R}^d$. A Borel set $E\subseteq\mathbb{R}^d$ is said to be \emph{$1$-rectifiable} if
$$
\mathcal{H}^1\bigg(E\setminus\bigcup_{j=1}^{\infty}f_j(\mathbb{R})\bigg)=0
\qquad\text{for Lipschitz maps }f_j : \mathbb{R} \rightarrow \mathbb{R}^d.
$$
Conversely, a Borel set $E \subset \mathbb{R}^d$ is \textit{purely $1$-unrectifiable} if $\mathcal{H}^1 (E \cap f(\mathbb{R})) = 0$ for every Lipschitz function $f: \mathbb{R} \rightarrow \mathbb{R}^d$. In this article, we study the relationship between the length of $1$-rectifiable sets of positive, finite $\mathcal{H}^1$ measure (which we refer to as $\boldsymbol{1}$-\textbf{sets}) and the $\mathcal{H}^1$ measure of their projections onto curves, by developing a general framework for determining whether a given family of $d$ scalar mappings has the following property.

\begin{definition}[Good Witness Property]
\label{def:goodwitnessproperty-intro}
Let
$$
\mathsf{\Phi}:=\{\varphi_1,\ldots,\varphi_d\}
$$
be a family of mappings $\varphi_j:\mathbb{R}^d\rightarrow\mathbb{R}$. We say that $\mathsf{\Phi}$ has the \textbf{good witness property for rectifiable $1$-sets} (or, simply, the \textbf{good witness property}) if, for every rectifiable $1$-set $E\subseteq\mathbb{R}^d$, there exists an index $j_E\in\{1,\ldots,d\}$ such that
$$
\mathcal{H}^1\big(\varphi_{j_E}(E)\big)>0.
$$
The mapping $\varphi_{j_E}$ is called a \textbf{good witness for $E$}.
\end{definition}

The canonical classical example of this phenomenon is Federer's Projection Theorem, which relates the $\mathcal{H}^1$ measure of a given $1$-rectifiable set to the $\mathcal{H}^1$ measure of its projections onto any given orthonormal frame. We phrase Federer's result in language that is best adapted for our further use, and refer the reader to Federer's seminal textbook (see \cite[Theorem 3.2.27]{Federer1996}) for a full exposition of this result.

\begin{theorem}[Federer's Projection Theorem]\label{thm:federerprojection-intro}
Let $E \subset \mathbb{R}^d$ be any $1$-rectifiable set, and let $e_1,\ldots,e_d$ denote any orthonormal basis of $\mathbb{R}^d$. Writing $x=\sum_i x_i e_i$, let
$$
\pi_j(x):=x_j,
\qquad j\in\{1,\ldots,d\}.
$$
If
$$
a_j:=\int_{\pi_j(E)}
\mathcal{H}^0\big(\pi_j^{-1}(y)\cap E\big)\,d\mathcal{H}^1(y),
$$
then
$$
\big(a_1^2+\cdots+a_d^2\big)^{1/2}
\leq \mathcal{H}^1(E)
\leq a_1+\cdots+a_d.
$$
\end{theorem}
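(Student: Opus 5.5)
We will reduce both inequalities to a pointwise statement about unit tangent vectors by means of the area formula for rectifiable sets. Since $E$ is $1$-rectifiable, up to an $\mathcal{H}^1$-null set we may write $E\subseteq\bigcup_k f_k(\mathbb{R})$ with $f_k$ Lipschitz. By a standard decomposition (Lipschitz extension plus Rademacher and Lusin-type arguments), we may further arrange that the pieces are disjoint Borel subsets $E_k\subseteq \Gamma_k$, where each $\Gamma_k$ is a $C^1$ embedded arc. Consequently $E$ has an approximate tangent line $T_xE$ spanned by a unit vector $\tau(x)$ at $\mathcal{H}^1$-a.e.\ $x\in E$. If $\mathcal{H}^1(E)=\infty$, the upper bound is trivial; for the lower bound we argue on subsets of finite measure and then take a monotone limit. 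So we assume throughout that $\mathcal{H}^1(E)<\infty$.

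The key step is the coarea (area) formula for the Lipschitz map $\pi_j|_E$, applied to the rectifiable set $E$. It gives
$$
a_j=\int_{\pi_j(E)}\mathcal{H}^0\big(\pi_j^{-1}(y)\cap E\big)\,d\mathcal{H}^1(y)=\int_E \big|\langle \tau(x),e_j\rangle\big|\,d\mathcal{H}^1(x).
$$
Here the tangential Jacobian of the linear map $\pi_j=\langle\cdot,e_j\rangle$ restricted to the line $T_xE$ is exactly $|\langle\tau(x),e_j\rangle|$. To justify this, we prove it on each arc piece $E_k$ by parametrizing $\Gamma_k$ by arclength, $\gamma_k$, and applying the one-dimensional area formula to $t\mapsto \langle\gamma_k(t),e_j\rangle$ on $\gamma_k^{-1}(E_k)$. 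We then sum over $k$, using disjointness of the $E_k$ and countable additivity of the multiplicity integrand. I expect this step to be the main obstacle: one must check that the multiplicity function is measurable and that null sets in $E$ project to null sets, which holds because $\pi_j$ is $1$-Lipschitz.

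With this identity in hand, both inequalities follow from the pointwise estimates
$$
\Big(\sum_j\langle\tau,e_j\rangle^2\Big)^{1/2}=1\le\sum_j|\langle\tau,e_j\rangle|,
$$
valid for every unit vector $\tau$ because $\{e_j\}$ is orthonormal. Integrating the right-hand inequality over $E$ gives $\mathcal{H}^1(E)\le a_1+\cdots+a_d$. For the left-hand inequality, write $(a_1,\dots,a_d)=\int_E v(x)\,d\mathcal{H}^1(x)$, where
$$
v(x)=\big(|\langle\tau(x),e_1\rangle|,\dots,|\langle\tau(x),e_d\rangle|\big)
$$
is a vector-valued function with $|v(x)|=1$. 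Minkowski's integral inequality (the triangle inequality for the Euclidean norm of a Bochner integral) then yields
$$
\Big(\sum_j a_j^2\Big)^{1/2}=\Big|\int_E v\,d\mathcal{H}^1\Big|\le\int_E|v|\,d\mathcal{H}^1=\mathcal{H}^1(E),
$$
which completes the argument.
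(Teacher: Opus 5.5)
Your proof is correct. The paper does not prove this statement itself but cites Federer's Theorem 3.2.27, and your route is the standard proof of that result specialized to $m=1$: the area formula on $C^1$ arc pieces gives $a_j=\int_E|\langle\tau,e_j\rangle|\,d\mathcal{H}^1$, and then one uses $1=(\sum_j\langle\tau,e_j\rangle^2)^{1/2}\le\sum_j|\langle\tau,e_j\rangle|$ together with Minkowski's inequality.

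One small slip in your first paragraph: when $\mathcal{H}^1(E)=\infty$ it is the left-hand inequality that is trivial, and it is $\mathcal{H}^1(E)\le a_1+\cdots+a_d$ that needs an argument. This costs nothing, because your piecewise identity for $a_j$ holds for every $1$-rectifiable $E$ (such sets are automatically $\sigma$-finite) with no finiteness assumption, so integrating the pointwise bound handles that case directly.
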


If $E$ is a rectifiable $1$-set, the right-hand inequality forces $a_j>0$ for at least one $j$, and hence $\mathcal{H}^1\big(\pi_j(E)\big)>0$. Thus, the coordinate projections associated with every orthonormal basis have the good witness property. Our purpose is to develop a nonlinear version of this finite-witness phenomenon for generalized curve projections.

To this end, for each family $\mathsf{\Phi}:=\{\varphi_1,\ldots,\varphi_d\}$ of $C^2$ scalar mappings $\varphi_j:\mathbb{R}^d\rightarrow\mathbb{R}$, define its associated \textbf{canonical encoding map} as
$$
\mathsf{H}_{\mathsf{\Phi}}(x)
:=
\big(\varphi_1(x),\ldots,\varphi_d(x)\big).
$$
We denote its associated set of critical points by
$$
\Sigma_{\mathsf{\Phi}}
:=
\big\{
x\in\mathbb{R}^d:
\rank D_x\mathsf{H}_{\mathsf{\Phi}}(x)\leq d-1
\big\}.
$$

A common principle underlying this work is as follows: away from the critical points of $\mathsf{H}_{\mathsf{\Phi}}$, the Inverse Function Theorem makes the encoding locally bilipschitz, after which Federer's Projection Theorem may be applied to its coordinate functions. Thus the good witness property follows whenever a positive-length portion of $E$ lies in the regular region. We will prove this result for an arbitrary collection of $d$ scalar maps $\varphi_1,\ldots,\varphi_d$ in Section \ref{subsec:regularcurveprojections-genexample}.

The principal difficulty is that a positive-length portion of a rectifiable set may instead lie inside the critical set $\Sigma_{\mathsf{\Phi}}$. In order to address this further complication, we introduce the following two differential-geometric conditions.

\begin{definition}[Critical-Hypersurface and Tangential-Immersion Conditions]
\label{def:criticalhypersurfacetangentialimmersion-genexample}
Let 
$$
\mathsf{\Phi}:=\{\varphi_1,\ldots,\varphi_d\}
$$
be a collection of $C^2$ mappings $\varphi_j:\mathbb{R}^d\rightarrow\mathbb{R}$ with canonical encoding map $\mathsf{H}_{\mathsf{\Phi}}:\mathbb{R}^d\rightarrow\mathbb{R}^d$.

\begin{enumerate}
    \item The family $\mathsf{\Phi}$ satisfies the \textbf{critical-hypersurface condition} if there exists a $C^1$ embedded submanifold $M\subset\mathbb{R}^d$ satisfying $\dim M=d-1$ and
    $$
    \Sigma_{\mathsf{\Phi}}
    =
    \big\{
    x\in\mathbb{R}^d:
    \rank D_x\mathsf{H}_{\mathsf{\Phi}}(x)\leq d-1
    \big\}
    \subseteq M.
    $$

    \item Relative to such a hypersurface $M$, the family $\mathsf{\Phi}$ satisfies the \textbf{tangential-immersion condition} if, for every $x\in M$,
    $$
    \rank\big(
    D_x\mathsf{H}_{\mathsf{\Phi}}(x)\vert_{T_xM}
    \big)
    =d-1.
    $$
    Equivalently, for every $x\in M$, the linear functionals
    $$
    D_x\varphi_j(x)\vert_{T_xM}:T_xM\rightarrow\mathbb{R},
    \qquad j=1,\ldots,d,
    $$
    span the cotangent space $(T_xM)^*$.
\end{enumerate}
The family $\mathsf{\Phi}$ is called \textbf{admissible} if there exists a $C^1$ hypersurface $M$ relative to which both conditions hold.
\end{definition}

\begin{theorem}[Finite Witnesses Through the Critical Set]
\label{thm:dcurvesrefined-genexample}
Let $\mathsf{\Phi}:=\{\varphi_1,\ldots,\varphi_d\}$ be an admissible family of mappings $\varphi_j:\mathbb{R}^d\rightarrow\mathbb{R}$, where $d\geq2$. 

Then every Borel $1$-rectifiable set $E\subseteq\mathbb{R}^d$ satisfying $\mathcal{H}^1(E)>0$ has a good witness; that is, there exists some $j_E\in\{1,\ldots,d\}$ such that
$$
\mathcal{H}^1\big(\varphi_{j_E}(E)\big)>0.
$$
In particular, $\mathsf{\Phi}$ has the good witness property for rectifiable $1$-sets.
\end{theorem}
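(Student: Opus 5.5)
We split $E$ along the critical set: write $E = (E\setminus\Sigma_{\mathsf{\Phi}})\cup(E\cap\Sigma_{\mathsf{\Phi}})$. Since $\mathcal{H}^1(E)>0$, at least one piece has positive length, and both pieces are Borel and $1$-rectifiable. Since $\mathcal{H}^1(\varphi_j(E))\geq\mathcal{H}^1(\varphi_j(F))$ for $F\subseteq E$, it suffices to find a good witness for whichever piece has positive length.

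\emph{Regular case.} Suppose $\mathcal{H}^1(E\setminus\Sigma_{\mathsf{\Phi}})>0$. The set $\mathbb{R}^d\setminus\Sigma_{\mathsf{\Phi}}$ is open. By the Inverse Function Theorem it is covered by countably many open balls $B_k$ on which $\mathsf{H}:=\mathsf{H}_{\mathsf{\Phi}}$ is bilipschitz onto its image. Some $F:=E\cap B_k$ has positive length. Then $\mathsf{H}(F)$ is $1$-rectifiable with $\mathcal{H}^1(\mathsf{H}(F))>0$, because bilipschitz maps preserve rectifiability and positivity of $\mathcal{H}^1$. Apply Theorem~\ref{thm:federerprojection-intro} in the standard basis to $\mathsf{H}(F)$. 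The right-hand inequality gives $a_j>0$ for some $j$, hence $\mathcal{H}^1(\pi_j(\mathsf{H}(F)))>0$. Since $\pi_j\circ\mathsf{H}=\varphi_j$, this gives $\mathcal{H}^1(\varphi_j(F))>0$.

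\emph{Critical case.} Suppose $\mathcal{H}^1(E\cap\Sigma_{\mathsf{\Phi}})>0$, so that $F:=E\cap\Sigma_{\mathsf{\Phi}}\subseteq M$.
\begin{enumerate}
\item \emph{Chart.} Near each $x_0\in M$, choose a $C^1$ chart $\psi:U\to M$ with $U\subseteq\mathbb{R}^{d-1}$ open, so that $\psi$ is bilipschitz onto a relatively open piece $V\subseteq M$ after shrinking.
\item \emph{Local diffeomorphism.} The tangential-immersion condition says $D(\mathsf{H}\circ\psi)$ has rank $d-1$. So some $d-1$ of the functionals $D\varphi_j|_{T_xM}$ are linearly independent at $\psi(u_0)$, say those with $j\neq i$. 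Hence $G:=(\varphi_j\circ\psi)_{j\neq i}:U\to\mathbb{R}^{d-1}$ has invertible differential at $u_0$. By the inverse function theorem, after shrinking $U$, $G$ is a $C^1$ bilipschitz diffeomorphism.
\item \emph{Countable cover.} By Lindelöf, countably many such patches cover $M$. So some patch $V$ has $\mathcal{H}^1(F\cap V)>0$.
\item \emph{Transfer.} The set $G(\psi^{-1}(F\cap V))\subseteq\mathbb{R}^{d-1}$ is $1$-rectifiable, Borel, and of positive length.
\item \emph{Federer in dimension $d-1$.} Apply Theorem~\ref{thm:federerprojection-intro} in $\mathbb{R}^{d-1}$ with the standard basis. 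Some coordinate projection has positive length, i.e.\ $\mathcal{H}^1(\varphi_j(F\cap V))>0$ for some $j\neq i$.
\end{enumerate}

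When $d=2$, the Federer step is trivial: $\mathbb{R}^1$ is the target itself. In that case $G$ is a single $\varphi_j$ restricted to the curve $M$ with nonvanishing tangential derivative, which is locally bilipschitz.

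\emph{Main obstacle.} The delicate point is the bookkeeping in the critical case.
\begin{itemize}
\item The index $i$ omitted by the rank condition varies with the point, so the countable cover must be indexed by both the patch and the choice of $d-1$ independent functionals. This is harmless because there are only finitely many index choices.
\item $M$ is only $C^1$, so we must check that local charts are bilipschitz. They are, since $C^1$ embeddings are locally bilipschitz on compact neighbourhoods, and $G=(\varphi_j)_{j\neq i}\circ\psi$ is $C^1$.
\item Measurability is needed throughout: Lipschitz images of Borel sets are analytic, hence $\mathcal{H}^1$-measurable, which suffices for Federer's theorem.
\end{itemize}
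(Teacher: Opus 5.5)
Your proposal is correct and follows essentially the same route as the paper. You handle the regular part by local bilipschitz inversion of $\mathsf{H}$ plus Federer's theorem. On $E\cap\Sigma_{\mathsf{\Phi}}\subseteq M$, you use tangential immersion to select $d-1$ coordinates whose restriction to $M$ is locally bilipschitz, then apply Federer's theorem in $\mathbb{R}^{d-1}$ (trivially when $d=2$). The differences are cosmetic. You use a Lindel\"of cover of the open regular set, where the paper uses compact exhaustion with finite covers (Theorem~\ref{thm:curvedfed-genexample}, Corollary~\ref{cor:dcurves-genexample}). You also use explicit charts with the Euclidean inverse function theorem, where the paper uses the manifold inverse function theorem over the finite open cover $\{U_I\}$ of $M$.
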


We remark that the standard convention that the empty set is an embedded hypersurface of $\mathbb{R}^d$ is adopted. Thus, when $\Sigma_{\mathsf{\Phi}}=\emptyset$, both conditions hold vacuously with $M=\emptyset$. The tangential-immersion terminology is quite literal, since the rank condition states precisely that $\mathsf{H}_{\mathsf{\Phi}}\vert_M$ is a $C^1$ immersion.

By definition, the critical-hypersurface and tangential-immersion conditions are pointwise geometric hypotheses. However, as we shall show by direct example, these conditions need not be open under perturbation of the underlying parameters.

For this work, the additional condition which provides this stability under open perturbations is the following fold non-degeneracy, which we state relative to an arbitrary underlying parameter space $\mathcal{A} \subseteq \mathbb{R}^m$ for some $m \geq 1$.

\begin{definition}[Fold Non-Degeneracy]
\label{def:foldnondegeneracy-genexample}
Let $\mathcal{A}\subseteq\mathbb{R}^m$ be an open parameter set. Suppose
$$
(\alpha,x)\longmapsto\varphi_\alpha(x)
$$
is a $C^2$ mapping on $\mathcal{A}\times\mathbb{R}^d$. For each parameter tuple $\boldsymbol{\alpha}=(\alpha_1,\ldots,\alpha_d)\in\mathcal{A}^d$, define the associated canonical encoding map
$$
\mathsf{H}_{\boldsymbol{\alpha}}(x)
:=
\big(
\varphi_{\alpha_1}(x),\ldots,\varphi_{\alpha_d}(x)
\big),
$$
and further let
$$
\Sigma_{\boldsymbol{\alpha}}
:=
\big\{
x\in\mathbb{R}^d:
\rank D_x\mathsf{H}_{\boldsymbol{\alpha}}(x)\leq d-1
\big\},
\qquad\text{and let}
J_{\boldsymbol{\alpha}}(x)
:=
\det D_x\mathsf{H}_{\boldsymbol{\alpha}}(x).
$$
If $U\subseteq\mathbb{R}^d$ is any open set, the encoding $\mathsf{H}_{\boldsymbol{\alpha}}$ is said to be \textbf{fold non-degenerate on $U$} if every $x\in U\cap\Sigma_{\boldsymbol{\alpha}}$ satisfies the following conditions simultaneously:
\begin{enumerate}
    \item The \textbf{rank-stability condition}, which means that
\begin{equation}\label{eq:foldrankstability-genexample}
\rank D_x\mathsf{H}_{\boldsymbol{\alpha}}(x)=d-1.
\end{equation}
\item The \textbf{kernel-transversality condition}, which means that
\begin{equation}\label{eq:foldkerneltransversality-genexample}
D_xJ_{\boldsymbol{\alpha}}(x)[v]\neq0,
\qquad \forall v\in
\ker D_x\mathsf{H}_{\boldsymbol{\alpha}}(x) \setminus \{0\}.
\end{equation}
\end{enumerate}
\end{definition}

The geometric interpretation of fold non-degeneracy is as follows. First, the condition \eqref{eq:foldrankstability-genexample} means that, for each critical point $x \in U\cap\Sigma_{\boldsymbol{\alpha}}$, the space $\ker D_x \mathsf{H}_{\boldsymbol{\alpha}}(x)$ is one-dimensional and hence determines a unique direction, represented up to sign by a unit vector $n_x$. Condition \eqref{eq:foldkerneltransversality-genexample} then requires that the Jacobian determinant $J_{\boldsymbol{\alpha}}$ vanish transversely along the unique direction lost by the differential of a corank-one encoding. We remark that Condition \eqref{eq:foldkerneltransversality-genexample} is a standard fold condition for a mapping between spaces of the same dimension; see, for example, \cite{GG1973}.

Our main motivation for introducing fold non-degeneracy is the following openness result for the good witness property, under the assumption that the underlying family has the fold non-degeneracy property in a compact neighbourhood of the rectifiable set $E$.

\begin{theorem}[Stable Finite Witnesses]
\label{thm:introstablefinitewitnesses}
Let $\mathcal{A}\subseteq\mathbb{R}^m$ be open, suppose $(\alpha,x)\mapsto\varphi_\alpha(x)$ is jointly $C^2$, and fix a tuple $\boldsymbol{\alpha}^0=(\alpha_1^0,\ldots,\alpha_d^0)\in\mathcal{A}^d$. Let $K\subset U\subseteq\mathbb{R}^d$, where $K$ is compact and $U$ is open, and suppose that $\mathsf{H}_{\boldsymbol{\alpha}^0}$ is fold non-degenerate at every critical point in $\overline{U}$. Then there are open neighbourhoods
$$
\alpha_j^0\in U_j\subseteq\mathcal{A},
\qquad j=1,\ldots,d,
$$
with the following property: for every Borel $1$-rectifiable set $E\subseteq K$ satisfying $\mathcal{H}^1(E)>0$, there is an index $j_E\in\{1,\ldots,d\}$ such that
$$
\mathcal{H}^1\big(\varphi_\alpha(E)\big)>0
\qquad\text{for every }\alpha\in U_{j_E}.
$$
The neighbourhoods depend only on $\boldsymbol{\alpha}^0$ and $K$; only the successful index may depend on $E$.
\end{theorem}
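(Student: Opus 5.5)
The plan is to argue by contraposition. The reason this works is that the good witness property for a \emph{single} tuple $\boldsymbol{\beta}$ only needs $H_{\boldsymbol{\beta}}$ to be fold non-degenerate near $K$. Suppose some $E$ had no index $j$ that works uniformly on $U_j$. Then for each $j$ there would be a bad parameter $\beta_j\in U_j$ with $\mathcal{H}^1(\varphi_{\beta_j}(E))=0$. Assembling these into the tuple $\boldsymbol{\beta}=(\beta_1,\ldots,\beta_d)\in U_1\times\cdots\times U_d$ gives an encoding $\mathsf{H}_{\boldsymbol{\beta}}$ all of whose coordinates kill $E$. So it suffices to choose the $U_j$ so that every tuple in the product has the good witness property for rectifiable $1$-sets inside $K$. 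The index then depends only on $E$, and the neighbourhoods only on $\boldsymbol{\alpha}^0$ and $K$.

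\textbf{Step 1 (openness of fold non-degeneracy).} Fix a compact $K'$ with $K\subset \operatorname{int}K'\subset K'\subset \overline U$. Since $(\alpha,x)\mapsto\varphi_\alpha(x)$ is jointly $C^2$, the maps $(\boldsymbol{\beta},x)\mapsto D_x\mathsf{H}_{\boldsymbol{\beta}}(x)$ and $(\boldsymbol{\beta},x)\mapsto J_{\boldsymbol{\beta}}(x)$ are jointly $C^1$.

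At a critical point $x$ of $\mathsf{H}_{\boldsymbol{\alpha}^0}$, condition \eqref{eq:foldkerneltransversality-genexample} gives $D_xJ_{\boldsymbol{\alpha}^0}(x)\neq0$. Together with \eqref{eq:foldrankstability-genexample}, the following quantity is then strictly positive:
\[
\min\{\,|D_xJ(x)[v]| + |D_x\mathsf{H}(x)v| \;:\; |v|=1\,\}
\;+\;\sigma_{d-1}(D_x\mathsf{H}(x)).
\]
Here $\sigma_{d-1}$ is the second-smallest singular value. This positivity holds on a neighbourhood of the compact set $\Sigma_{\boldsymbol{\alpha}^0}\cap K'$, and by continuity it persists for $\boldsymbol{\beta}$ near $\boldsymbol{\alpha}^0$. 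Away from that neighbourhood, $|J_{\boldsymbol{\alpha}^0}|$ is bounded below on $K'$, so the points there stay regular for $\boldsymbol{\beta}$ nearby.

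A standard compactness argument then yields product neighbourhoods $U_1\times\cdots\times U_d$ such that, for every $\boldsymbol{\beta}$ in the product and every $x\in\Sigma_{\boldsymbol{\beta}}\cap K'$, three things hold:
\begin{itemize}
\item $\rank D_x\mathsf{H}_{\boldsymbol{\beta}}(x)=d-1$;
\item $D_xJ_{\boldsymbol{\beta}}(x)\neq 0$;
\item the kernel line $\ker D_x\mathsf{H}_{\boldsymbol{\beta}}(x)$ is not contained in $\ker D_xJ_{\boldsymbol{\beta}}(x)$.
\end{itemize}
I expect this uniformity step to be the main technical obstacle. It is where the joint $C^2$ hypothesis is used, since it gives the $C^1$ dependence of $J_{\boldsymbol{\beta}}$ on $(\boldsymbol{\beta},x)$.

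\textbf{Step 2 (good witness for each $\boldsymbol{\beta}$).} Fix $\boldsymbol{\beta}$ in the product and a rectifiable $1$-set $E\subseteq K$. Near $\Sigma_{\boldsymbol{\beta}}\cap K$, the set $M_{\boldsymbol{\beta}}:=\{J_{\boldsymbol{\beta}}=0\}$ is a $C^1$ hypersurface with $T_xM_{\boldsymbol{\beta}}=\ker D_xJ_{\boldsymbol{\beta}}(x)$. By Step 1, the one-dimensional kernel of $D_x\mathsf{H}_{\boldsymbol{\beta}}(x)$ is transverse to $T_xM_{\boldsymbol{\beta}}$. Hence $D_x\mathsf{H}_{\boldsymbol{\beta}}(x)|_{T_xM_{\boldsymbol{\beta}}}$ is injective, which is exactly the tangential-immersion condition, locally near $K$.

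We can now either invoke a localized form of Theorem~\ref{thm:dcurvesrefined-genexample} or argue directly. For the direct argument, let $\tau(x)$ be the approximate tangent of $E$, defined $\mathcal{H}^1$-a.e. By the area formula on Lipschitz pieces, $\mathcal{H}^1(\varphi_{\beta_j}(E))=0$ forces $D\varphi_{\beta_j}(x)\tau(x)=0$ for a.e.\ $x\in E$. If this held for every $j$, then $\tau(x)\in\ker D_x\mathsf{H}_{\boldsymbol{\beta}}(x)$ a.e., so a.e.\ point of $E$ lies in $\Sigma_{\boldsymbol{\beta}}\subseteq M_{\boldsymbol{\beta}}$.

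For a rectifiable set contained in a $C^1$ hypersurface, $\tau(x)\in T_xM_{\boldsymbol{\beta}}$ a.e. This contradicts the transversality of the kernel established in Step 1. Hence some $j$ satisfies $\mathcal{H}^1(\varphi_{\beta_j}(E))>0$.

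\textbf{Step 3 (conclusion).} Apply the contrapositive from the first paragraph. If for each $j$ there were $\beta_j\in U_j$ with $\mathcal{H}^1(\varphi_{\beta_j}(E))=0$, the tuple $\boldsymbol{\beta}=(\beta_1,\ldots,\beta_d)$ would violate Step 2. Therefore some $j_E$ satisfies $\mathcal{H}^1(\varphi_\alpha(E))>0$ for all $\alpha\in U_{j_E}$.
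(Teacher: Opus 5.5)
Your proof is correct. Its outer structure matches the paper's. Fold non-degeneracy is made to persist, by continuity and compactness, on a compact neighbourhood of $K$ for all tuples near $\boldsymbol{\alpha}^0$ (Theorem~\ref{thm:localfoldstability-genexample}). Then the product $(U_1\cap\mathcal{B}_E)\times\cdots\times(U_d\cap\mathcal{B}_E)$ is shown to be empty (Theorem~\ref{thm:stablegoodwitnessneighbourhoods-genexample}).

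In Step 1 the paper builds an explicit continuous kernel field $v_{\boldsymbol{\beta},y}=(-B_{\boldsymbol{\beta},y}^{-1}c_{\boldsymbol{\beta},y},1)$ from an invertible $(d-1)\times(d-1)$ minor. You replace it with the continuity of $m(\boldsymbol{\beta},x):=\min_{|v|=1}\big(|D_xJ_{\boldsymbol{\beta}}(x)[v]|+|D_x\mathsf{H}_{\boldsymbol{\beta}}(x)v|\big)$. This is the same mechanism in coordinate-free form. As displayed, however, you add $\sigma_{d-1}$ to this minimum, and positivity of the sum does not yield your three bullets at nearby critical points. Each term must be kept positive separately.

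In fact $m>0$ alone suffices. At a critical point, a kernel of dimension at least two would meet $\ker D_xJ_{\boldsymbol{\beta}}(x)$, which has codimension at most one, nontrivially. So $m>0$ forces rank $d-1$, $D_xJ_{\boldsymbol{\beta}}(x)\neq 0$, and kernel transversality all at once.

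The genuine difference is in Step 2. For a fixed nearby tuple, the paper splits $E$ into two pieces:
\begin{enumerate}
\item On $E\setminus\Sigma_{\boldsymbol{\beta}}$ it uses Corollary~\ref{cor:dcurves-genexample}, that is, local bilipschitz inversion of $\mathsf{H}_{\boldsymbol{\beta}}$ followed by Federer's projection theorem.
\item On $E\cap\Sigma_{\boldsymbol{\beta}}$ it reruns the proof of Theorem~\ref{thm:dcurvesrefined-genexample}: it chooses an index set $I$ making $\mathsf{H}^I\vert_M$ a local diffeomorphism, then applies Federer in $\mathbb{R}^{d-1}$.
\end{enumerate}
Your area-formula argument handles both pieces in one stroke. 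Null images force $\tau_E(x)\in\ker D_x\mathsf{H}_{\boldsymbol{\beta}}(x)$ almost everywhere. This places almost all of $E$ in $\Sigma_{\boldsymbol{\beta}}$, and then places $\tau_E(x)$ in $T_xM_{\boldsymbol{\beta}}\cap\ker D_x\mathsf{H}_{\boldsymbol{\beta}}(x)=\{0\}$.

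Your route buys several things:
\begin{enumerate}
\item It is shorter and needs neither Federer's theorem nor bilipschitz charts.
\item It relies only on the area formula, which the paper already uses in Proposition~\ref{prop:fixedgoodwitnessopen-genexample}, and on the fact that approximate tangents of a rectifiable subset of a $C^1$ hypersurface lie in its tangent spaces.
\item It sidesteps the paper's informal ``localized'' reuse of Theorem~\ref{thm:dcurvesrefined-genexample}, which was stated for a global hypersurface $M\supseteq\Sigma_{\mathsf{\Phi}}$.
\end{enumerate}
The paper's route, in exchange, stays within its encoding-plus-Federer framework. On the regular region it also gives the quantitative inequality of Theorem~\ref{thm:curvedfed-genexample}, which your qualitative argument does not produce.
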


The proof will show that, on a fixed compact set $K\subset\mathbb{R}^d$, fold non-degeneracy persists under sufficiently small changes of the parameter tuple. We will further demonstrate that, under a corresponding \textit{global} fold-stability hypothesis, the same conclusion holds without compactly localizing $E$.

The critical-hypersurface, tangential-immersion and fold non-degeneracy properties are all motivated by two geometrically simple families of curve projections: pinned squared-distance maps and planar radial angle maps. In particular, these initial examples will be used to clarify the role of the three conditions above, and to give nice working examples of each of these properties in action. We consider both examples independently, and before any proofs of our more general results, in Section \ref{sec:pinneddistances-examples} and Section \ref{sec:radialprojections-examples}, respectively.

\subsection{Background and related work}
\label{subsec:backgroundrelatedwork-intro}

The classical projection theory of planar $1$-sets began with Besicovitch's Projection Theorem. In its planar form, the theorem states that if a Borel set $E\subseteq\mathbb{R}^2$ satisfies
$$
0<\mathcal{H}^1(E)<\infty
$$
and is purely $1$-unrectifiable, then
$$
\mathcal{H}^1\big(P_L(E)\big)=0
$$
for almost every line $L\in G(2,1)$, where $P_L$ denotes orthogonal projection onto $L$. Besicovitch proved this result in \cite{Besicovitch1939}, and Federer subsequently extended it to $m$-dimensional sets in $\mathbb{R}^d$. The resulting Besicovitch--Federer Projection Theorem characterizes purely $m$-unrectifiable sets of finite $\mathcal{H}^m$ measure by the almost-everywhere vanishing of their orthogonal projections; see \cite{Federer1996} and \cite[Theorem 18.1]{Mattila1995}.

For the present paper, a finite consequence of Federer's theorem is especially relevant. Let $L_1,\ldots,L_d\subseteq\mathbb{R}^d$ be linearly independent lines, and let $P_{L_j}$ denote orthogonal projection onto $L_j$. Then every Borel $1$-rectifiable set $E\subseteq\mathbb{R}^d$ satisfying $\mathcal{H}^1(E)>0$ obeys
$$
\mathcal{H}^1\big(P_{L_j}(E)\big)>0
$$
for at least one $j\in\{1,\ldots,d\}$. Indeed, if $e_j$ is a unit vector spanning $L_j$, then
$$
T(x):=\big(\langle x,e_1\rangle,\ldots,\langle x,e_d\rangle\big)
$$
is an invertible linear mapping, and Federer's Projection Theorem applied to the rectifiable set $T(E)$ gives the conclusion; see Theorem~\ref{thm:federerprojection-intro} and \cite[Theorem 3.2.27]{Federer1996}. We refer to this observation as the \emph{$d$-lines corollary of Federer's theorem}. It is the deterministic finite-witness counterpart of the almost-everywhere projection theorem and the classical linear model which motivated the good witness property developed in this work.

For nonlinear parameterized families, a complementary line of research studies almost-everywhere projection theorems and quantitative estimates for exceptional parameters. Peres and Schlag developed a general transversality framework for such families by studying the normalized two-point difference quotient
$$
(\alpha;x,y)\longmapsto
\frac{\Pi_\alpha(x)-\Pi_\alpha(y)}{|x-y|},
$$
where $x,y\in\mathbb{R}^d$ and $\alpha\in\mathcal{A}\subseteq\mathbb{R}^m$. Their principal condition requires the parameter dependence to separate points whose images are suitably close. Together with appropriate smoothness assumptions, this converts energy estimates for a measure into almost-everywhere projection theorems and Hausdorff-dimension estimates for exceptional parameters \cite{PS2000}. Orponen subsequently obtained sharp exceptional-parameter estimates for slicing problems in this generalized-projection setting \cite{Orponen2014}.

Among the developments closest to the examples in this paper, Bongers and Taylor used quantitative transversality to obtain lower bounds for nonlinear variants of Favard length, including curve projections, radial visibility, and higher-dimensional surface analogues \cite{BT2023}. A recent comparison principle of \L aba, McDonald, and Taylor locally relates generalized projections to orthogonal projections and transfers quantitative Favard-length estimates between the two settings \cite{LMT2026}. These results place pinned-distance, radial-projection, and variable-radii circle problems within a broader nonlinear projection scheme. The emphasis in the present paper is different: rather than averaging over parameter spaces, it studies a finite-witness problem at the $1$-rectifiable dimensional endpoint. The objective is to find, from a prescribed collection of $d$ scalar mappings, a concrete index for which the image of a given positive-length rectifiable set has positive length.

More recently, Li and Taylor established a quantified two-projection theorem for nonlinear projection families, with applications that include pinned distances and radial projections \cite{LT2026}. Their quantitative parameter-space approach is complementary to the deterministic finite-tuple method developed here. Fraser's nonlinear projection theorem for Assouad dimension provides another complementary framework, with applications to distance sets, radial projections, and distances induced by curved norms \cite{Fraser2023}.

This paper was developed in tandem with the companion paper \cite{BBMTCompanion}. The two papers share the same finite nonlinear encoding principle but pursue complementary aims: the companion paper applies it to pinned distance sets, radial projections, and unions of circles with variable radii, while the present paper develops the \textit{critical-hypersurface}, \textit{tangential-immersion}, and \textit{fold non-degeneracy conditions}. These differential-geometric conditions do not appear in the companion paper, and we posit that these more general differential-geometric conditions provide a unified framework for the same three canonical examples presented in \cite{BBMTCompanion}.

\subsection*{Organization of the paper}

These two motivating families are presented before the main results from the abstract are proven. Section~\ref{sec:pinneddistances-examples} verifies the critical-set, tangential-rank, fold, and global openness properties for pinned distances, while Section~\ref{sec:radialprojections-examples} identifies the corresponding failure and localized openness phenomena for radial projections. Their contrast motivates the hypotheses used throughout the remainder of the paper. Section~\ref{sec:generalizedcurveprojections-genexample} then develops the finite-witness framework, the corresponding exceptional-parameter estimates, and the local and global parameter-space openness supplied by fold non-degeneracy. Section~\ref{sec:exoticcurveprojections-esoteric} treats more exotic examples of generalized curve projections, including: pinned nonlinear anisotropic distance functions, Bregman functionals associated with smoothed polyhedral norms, and critical hypersurfaces with arbitrary prescribed $C^2$ geometry. We also include a short appendix, which records the differential-geometric facts used in our proofs in language that is easily adapted for our arguments.

\subsection*{Acknowledgments}
This work grew out of experiences fostered by the American Institute of Mathematics through the SQuaREs project \emph{Covering Fractals by Curves}, with group members P. Bright, R. Bongers and K. Taylor. The author is supported by an \emph{Arts and Science Postdoctoral Fellowship} at the University of Toronto as well as a \textit{Canadian Postdoctoral Research Award} from the Natural Sciences and Engineering Research Council of Canada. The author is also grateful to Alex Iosevich and Ignacio Uriarte-Tuero for ongoing, engaging mathematical discussions.

\section{Pinned distances}
\label{sec:pinneddistances-examples}

This section determines exactly how the pinned-distance family interacts with the critical-hypersurface, tangential-immersion, and fold non-degeneracy conditions introduced in Definitions~\ref{def:criticalhypersurfacetangentialimmersion-genexample} and \ref{def:foldnondegeneracy-genexample}. These conditions are specific to the present paper. The calculations below verify them for pinned squared-distance maps; the general proofs appear in Section~\ref{sec:generalizedcurveprojections-genexample}.

Let
$$
\mathcal{P}
:=
\{p_1,\ldots,p_d\}\subseteq\mathbb{R}^d
$$
be an affinely independent set. Thus,
$$
L_{\mathcal{P}}
:=
\vspan\{p_2-p_1,\ldots,p_d-p_1\}
$$
has dimension $d-1$, and
$$
M_{\mathcal{P}}
:=
\Aff(\mathcal{P})
=
p_1+L_{\mathcal{P}}
$$
is an affine hyperplane. For each pin $p\in\mathbb{R}^d$, write
$$
d_p(x):=|x-p|
\qquad\text{and}\qquad
\Delta_p(E):=d_p(E)=\big\{|x-p|:x\in E\big\}.
$$
For each $p_j\in\mathcal{P}$, set
$$
\varphi_{p_j}(x):=d_{p_j}(x)^2=|x-p_j|^2,
$$
and form the canonical encoding map
$$
\mathsf{H}_{\mathcal{P}}(x)
:=
\big(\varphi_{p_1}(x),\ldots,\varphi_{p_d}(x)\big).
$$

\begin{proposition}[Admissibility of Pinned Squared Distances]
\label{prop:pinneddistanceadmissible-examples}
Let $\mathcal{P}\subseteq\mathbb{R}^d$ be any affinely independent set of $d$ pins. Then
$$
\Sigma_{\mathcal{P}}
:=
\big\{x\in\mathbb{R}^d:
\rank D_x\mathsf{H}_{\mathcal{P}}(x)\leq d-1\big\}
=
M_{\mathcal{P}}.
$$
Moreover,
$$
\rank\big(
D_x\mathsf{H}_{\mathcal{P}}(x)
\vert_{T_xM_{\mathcal{P}}}
\big)
=d-1,
\qquad x\in M_{\mathcal{P}}.
$$
Consequently, the pinned squared-distance family satisfies the critical-hypersurface and tangential-immersion conditions. Moreover, $\mathsf{H}_{\mathcal{P}}$ is fold non-degenerate on $\mathbb{R}^d$.
\end{proposition}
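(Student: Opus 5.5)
The plan is to compute everything explicitly from
$$
D_x\varphi_{p_j}(x)=2(x-p_j)^{T}.
$$
Thus $D_x\mathsf{H}_{\mathcal{P}}(x)$ is the $d\times d$ matrix $A(x)$ with rows $2(x-p_j)^T$. Subtracting the first row from the others leaves rows $2(p_1-p_j)^T$ for $j\geq2$. These are constant in $x$ and linearly independent by affine independence, and they span $L_{\mathcal{P}}$. Hence $\rank A(x)\geq d-1$ for every $x$.

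For the critical set, $\rank A(x)=d-1$ exactly when the first row $x-p_1$ lies in $L_{\mathcal{P}}$, that is, when $x\in p_1+L_{\mathcal{P}}=M_{\mathcal{P}}$. So $\Sigma_{\mathcal{P}}=M_{\mathcal{P}}$. This is a $C^\infty$ affine hyperplane, so the critical-hypersurface condition holds. The same computation gives rank stability \eqref{eq:foldrankstability-genexample}.

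For the tangential rank, fix $x\in M_{\mathcal{P}}$, so that $T_xM_{\mathcal{P}}=L_{\mathcal{P}}$. Suppose $v\in L_{\mathcal{P}}$ lies in the kernel of $A(x)\vert_{L_{\mathcal{P}}}$. Then $\langle p_1-p_j,v\rangle=0$ for all $j\geq2$, so $v\perp L_{\mathcal{P}}$. Since also $v\in L_{\mathcal{P}}$, we get $v=0$. The restriction is therefore injective on a $(d-1)$-dimensional space and has rank $d-1$. Equivalently, the row-reduced functionals $\langle p_1-p_j,\cdot\rangle$ already span $(L_{\mathcal{P}})^*$.

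For kernel transversality, let $\nu$ be a unit normal to $L_{\mathcal{P}}$ and write $x=p_1+y+t\nu$ with $y\in L_{\mathcal{P}}$ and $t\in\mathbb{R}$. The row operations above do not change the determinant. Expanding the first row $2(y+t\nu)$, the $y$-part is a combination of the other rows and contributes nothing, so
$$
J(x)=\det A(x)=c\,t,
$$
where $c=\det\big(2\nu,2(p_1-p_2),\ldots,2(p_1-p_d)\big)\neq0$. Thus $J$ is affine in $x$ with $D_xJ=c\,\nu^T$.

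At $x\in M_{\mathcal{P}}$ the kernel of $A(x)$ is the set of $v$ orthogonal to $L_{\mathcal{P}}$ (from rows $j\geq2$) and to $x-p_1\in L_{\mathcal{P}}$. So the kernel is $\mathbb{R}\nu$. Then $D_xJ(x)[s\nu]=cs\neq0$ for $s\neq0$, which gives \eqref{eq:foldkerneltransversality-genexample} on all of $\mathbb{R}^d$.

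I expect no step to be a genuine obstacle. The only care needed is the determinant identity $J=c\,t$, where one must justify that the row operations and the decomposition $x-p_1=y+t\nu$ reduce the determinant to a single nonzero multiple of $t$.
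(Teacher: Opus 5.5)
Your proposal is correct and follows essentially the same route as the paper's proof: the same row reduction, the same expansion of the first row along $L_{\mathcal{P}}$ and its normal to obtain $J(x)=c\,\langle x-p_1,\nu\rangle$ with $c\neq0$, and the same kernel and tangential-injectivity computations. The only difference is cosmetic: you read off $\Sigma_{\mathcal{P}}=M_{\mathcal{P}}$ directly from the rank of the row-reduced matrix, whereas the paper reads it off from the zero set of the determinant formula.
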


\begin{proof}
The gradients of the coordinate functions form the rows of the derivative matrix, and we thus have that
$$
D_x\mathsf{H}_{\mathcal{P}}(x)
=
2\begin{pmatrix}
(x-p_1)^{\mathsf T}\\
\vdots\\
(x-p_d)^{\mathsf T}
\end{pmatrix}.
$$
Subtracting the first row from each of the remaining rows gives
\begin{equation}\label{eq:pinneddistancerowreduction-examples}
\det D_x\mathsf{H}_{\mathcal{P}}(x)
=
2^d
\det\begin{pmatrix}
(x-p_1)^{\mathsf T}\\
(p_1-p_2)^{\mathsf T}\\
\vdots\\
(p_1-p_d)^{\mathsf T}
\end{pmatrix}.
\end{equation}
Let $n_{\mathcal{P}}$ be a unit normal to $L_{\mathcal{P}} := \vspan \{p_2 - p_1,\ldots,p_d - p_1\}$. Since $p_1-p_2,\ldots,p_1-p_d$ form a basis of $L_{\mathcal{P}}$, expansion of \eqref{eq:pinneddistancerowreduction-examples} in the normal direction shows that there is a nonzero constant $c_{\mathcal{P}}$ such that
\begin{equation}\label{eq:pinneddistancejacobian-examples}
J_{\mathcal{P}}(x)
:=
\det D_x\mathsf{H}_{\mathcal{P}}(x)
=
c_{\mathcal{P}}
\langle x-p_1,n_{\mathcal{P}}\rangle.
\end{equation}
It follows immediately that $\Sigma_{\mathcal{P}}=M_{\mathcal{P}}$.

Fix $x\in M_{\mathcal{P}}$. Since the rows of $D_x\mathsf{H}_{\mathcal{P}}(x)$ all lie in $L_{\mathcal{P}}$, whereas their pairwise differences span $L_{\mathcal{P}}$, the matrix $D_x\mathsf{H}_{\mathcal{P}}(x)$ has rank $d-1$. More precisely, if $v\in T_xM_{\mathcal{P}}=L_{\mathcal{P}}$ and
$$
D_x\mathsf{H}_{\mathcal{P}}(x)[v]=0,
$$
then subtraction of the first coordinate from the others gives
$$
\langle p_j-p_1,v\rangle=0,
\qquad \forall j \in \{2,\ldots,d\}.
$$
Since these differences span $L_{\mathcal{P}}$, this then implies that $v =0$. Thus the restriction to $T_xM_{\mathcal{P}}$ has rank $d-1$, which is precisely the tangential-immersion condition.

Finally, we verify the fold non-degeneracy condition. Since
$$
\ker D_x\mathsf{H}_{\mathcal{P}}(x)
=
\vspan\{n_{\mathcal{P}}\},
\qquad \forall x\in M_{\mathcal{P}},
$$
the identity \eqref{eq:pinneddistancejacobian-examples} gives
$$
D_xJ_{\mathcal{P}}(x)[n_{\mathcal{P}}]
=c_{\mathcal{P}}\neq0,
$$
and the same non-vanishing holds for every nonzero vector in $\vspan\{n_{\mathcal{P}}\}$. This is precisely the fold non-degeneracy condition.\qedhere
\end{proof}

Thus, the previous proposition and Theorem~\ref{thm:dcurvesrefined-genexample} yield a finite-witness conclusion without requiring the rectifiable set to avoid the affine span of the pins. We state this formally in the following corollary.

\begin{corollary}[Pinned-Distance Finite Witnesses]
\label{cor:pinneddistancefinitewitness-examples}
Let $E\subseteq\mathbb{R}^d$ be a Borel $1$-rectifiable set satisfying $\mathcal{H}^1(E)>0$, and let $\mathcal{P}=\{p_1,\ldots,p_d\}\subseteq\mathbb{R}^d$ be affinely independent. Then there exists $j_*\in\{1,\ldots,d\}$ such that
$$
\mathcal{H}^1\big(\Delta_{p_{j_*}}(E)\big)>0.
$$
Moreover, the set
$$
\mathcal{B}_E
:=
\big\{p\in\mathbb{R}^d:
\mathcal{H}^1(\Delta_p(E))=0\big\}
$$
is contained in an affine subspace of dimension at most $d-2$.
\end{corollary}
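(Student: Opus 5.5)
The first assertion will follow by combining Proposition~\ref{prop:pinneddistanceadmissible-examples} with Theorem~\ref{thm:dcurvesrefined-genexample}. The proposition shows that $\{\varphi_{p_1},\ldots,\varphi_{p_d}\}$ is admissible with $M=M_{\mathcal{P}}$, so the theorem gives an index $j_*$ with $\mathcal{H}^1(\varphi_{p_{j_*}}(E))>0$. It then remains to pass from squared distances to distances. We have $\varphi_{p}(E)=g(\Delta_p(E))$, where $g(t)=t^2$ is restricted to $[0,\infty)$. On any bounded interval $[0,R]$ the map $g$ is Lipschitz with constant $2R$.

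Write $E=\bigcup_R (E\cap B(p,R))$. Suppose $\mathcal{H}^1(\Delta_p(E))=0$. Then $\Delta_p(E\cap B(p,R))\subseteq[0,R]$ is null for every $R$. Its image under the Lipschitz map $g|_{[0,R]}$ is therefore null, and taking the countable union over $R$ gives $\mathcal{H}^1(\varphi_p(E))=0$. Contrapositively, $\mathcal{H}^1(\varphi_{p_{j_*}}(E))>0$ forces $\mathcal{H}^1(\Delta_{p_{j_*}}(E))>0$.

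For the structural bound on $\mathcal{B}_E$, argue by contradiction. If $\mathcal{B}_E$ were not contained in any affine subspace of dimension at most $d-2$, its affine hull would have dimension at least $d-1$. That hull is spanned by points of $\mathcal{B}_E$, so $\mathcal{B}_E$ contains $d$ affinely independent points $q_1,\ldots,q_d$. To see this, choose a maximal affinely independent subset of $\mathcal{B}_E$. Its affine span equals $\Aff(\mathcal{B}_E)$, so it has at least $d$ points, and any $d$ of them are affinely independent. The first part of the corollary, applied with pins $\{q_1,\ldots,q_d\}$, yields some $j$ with $\mathcal{H}^1(\Delta_{q_j}(E))>0$. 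This contradicts $q_j\in\mathcal{B}_E$.

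The only step needing care is the Lipschitz transfer from $\varphi_p$ to $d_p$, because $g$ is not globally Lipschitz. The localization to balls handles this, and unbounded $E$ is allowed since only countable unions are involved. Everything else is a direct appeal to the earlier results.
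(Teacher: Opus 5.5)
Your proof is correct and follows the paper's route. You combine Proposition~\ref{prop:pinneddistanceadmissible-examples} with Theorem~\ref{thm:dcurvesrefined-genexample}, transfer from squared distances to distances by countable Lipschitz localization, and then run the affine-independence contradiction that the paper packages as Proposition~\ref{prop:parameterizedexceptionalset-genexample}(2). Proving only the direction you need, via $t\mapsto t^2$ on $[0,R]$, and applying the first part directly to pins in $\mathcal{B}_E$ are minor streamlinings of the same argument.
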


\begin{proof}
Theorem~\ref{thm:dcurvesrefined-genexample} and Proposition~\ref{prop:pinneddistanceadmissible-examples} show that, for some $j_*$,
$$
\mathcal{H}^1\big(
\{|x-p_{j_*}|^2:x\in E\}
\big)>0.
$$
The functions $t\mapsto t^2$ and $t\mapsto\sqrt{t}$ are bilipschitz after restriction to compact subintervals of $(0,\infty)$. A countable localization therefore shows that positivity of length for a distance set is equivalent to positivity of length for the corresponding squared-distance set. This proves the finite-witness conclusion.

Every affinely independent $d$-tuple of pins is admissible by Proposition~\ref{prop:pinneddistanceadmissible-examples}. The exceptional-set conclusion now follows from Proposition~\ref{prop:parameterizedexceptionalset-genexample}.\qedhere
\end{proof}

The fold calculation also makes the finite-witness conclusion an open condition in the pin variables $\mathcal{P}$. In particular, if we let
$$
\mathfrak{F}_{\Delta}
:=
\big\{
(p_1,\ldots,p_d)\in(\mathbb{R}^d)^d:
p_1,\ldots,p_d\text{ are affinely independent}
\big\},
$$
then Proposition~\ref{prop:pinneddistanceadmissible-examples} shows that every tuple in $\mathfrak{F}_{\Delta}$ is globally fold non-degenerate.

\begin{corollary}[Stable Pinned-Distance Witnesses]
\label{cor:stablepinneddistancewitnesses-examples}
Fix an affinely independent tuple 
$$
\boldsymbol{p}^0=(p_1^0,\ldots,p_d^0).
$$
There then exist open neighbourhoods $p_j^0\in U_j\subseteq\mathbb{R}^d$ such that every tuple
$$
(p_1,\ldots,p_d)\in U_1\times\cdots\times U_d
$$
is affinely independent and its squared-distance encoding is globally fold non-degenerate. Moreover, for every Borel $1$-rectifiable set $E\subseteq\mathbb{R}^d$ satisfying $\mathcal{H}^1(E)>0$, there is an index $j_E\in\{1,\ldots,d\}$ for which
$$
\mathcal{H}^1\big(\Delta_p(E)\big)>0
\qquad
\text{for every }p\in U_{j_E}.
$$
The neighbourhoods depend only on $\boldsymbol{p}^0$, while the successful index may depend on $E$.
\end{corollary}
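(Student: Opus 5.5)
We first choose the neighbourhoods and then find the index. Affine independence of $(p_1,\ldots,p_d)$, viewed in $\mathbb{R}^d$, is equivalent to $\det(p_2-p_1,\ldots,p_d-p_1,\xi)\neq 0$ for some vector $\xi$. Equivalently, $\{p_j-p_1\}_{j\ge2}$ has rank $d-1$, which is the nonvanishing of some $(d-1)\times(d-1)$ minor. This is an open condition in $(\mathbb{R}^d)^d$. We therefore choose small balls $U_j\ni p_j^0$ so that every tuple in $U_1\times\cdots\times U_d$ is affinely independent. Proposition~\ref{prop:pinneddistanceadmissible-examples} then applies verbatim to each such tuple, and gives global fold non-degeneracy, admissibility, and critical set $M_{\mathcal P}=\Aff(\mathcal P)$. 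This settles the first assertion. No compactness is needed, because the computation in that proposition is exact for every affinely independent tuple, not a perturbative estimate.

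For the uniform witness, we argue by contradiction. Fix a Borel $1$-rectifiable $E$ with $\mathcal{H}^1(E)>0$, and suppose that for each $j$ there is a pin $q_j\in U_j$ with $\mathcal{H}^1(\Delta_{q_j}(E))=0$. The tuple $(q_1,\ldots,q_d)$ lies in $U_1\times\cdots\times U_d$, so it is affinely independent. Corollary~\ref{cor:pinneddistancefinitewitness-examples}, applied to $\mathcal{P}=\{q_1,\ldots,q_d\}$, then produces some $j_*$ with $\mathcal{H}^1(\Delta_{q_{j_*}}(E))>0$, which is a contradiction. Hence some index $j_E$ has the property that $\mathcal{H}^1(\Delta_p(E))>0$ for every $p\in U_{j_E}$. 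This is exactly the claim, and the $U_j$ depend only on $\boldsymbol p^0$.

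The key step is the observation that the product structure of $U_1\times\cdots\times U_d$ allows the bad pins to be chosen independently in each factor. Openness of affine independence then turns the pointwise finite-witness statement into the uniform one.

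The main obstacle I expect is confirming that Corollary~\ref{cor:pinneddistancefinitewitness-examples} may be invoked for arbitrary affinely independent pins, not only for $\boldsymbol p^0$. It may. Its proof rests on Theorem~\ref{thm:dcurvesrefined-genexample} and Proposition~\ref{prop:pinneddistanceadmissible-examples}, and both hold for every affinely independent tuple. They also hold without localizing $E$ to a compact set, because admissibility is global; this is why, unlike in Theorem~\ref{thm:introstablefinitewitnesses}, no compact $K$ is needed. Alternatively, one could cite the global version of Theorem~\ref{thm:introstablefinitewitnesses} under global fold stability, but the contradiction argument above is self-contained.
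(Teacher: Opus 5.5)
Your proposal is correct and follows essentially the same route as the paper. Both arguments choose product neighbourhoods inside the open set of affinely independent tuples, invoke Proposition~\ref{prop:pinneddistanceadmissible-examples} for global fold non-degeneracy, and then pick a bad pin in each factor to reach a contradiction. The paper packages that last step as Corollary~\ref{cor:globalstablegoodwitnessneighbourhoods-genexample}, while you inline it. You also route it through Corollary~\ref{cor:pinneddistancefinitewitness-examples}, which already handles the passage from squared distances to distances and uses only admissibility; the paper performs that bilipschitz localization separately.
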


\begin{proof}
Choose the product neighbourhoods so that
$$
U_1\times\cdots\times U_d
\subseteq
\mathfrak{F}_{\Delta}.
$$
Global fold non-degeneracy throughout this product follows from Proposition~\ref{prop:pinneddistanceadmissible-examples}. Corollary~\ref{cor:globalstablegoodwitnessneighbourhoods-genexample}, followed by the countable bilipschitz localization between distances and squared distances used above, gives the conclusion.\qedhere
\end{proof}

\section{Planar radial projections}
\label{sec:radialprojections-examples}

Within the framework introduced here, the radial-projection family provides the sharp contrast with pinned distances: planar radial angle maps satisfy the critical-hypersurface condition, but fail both tangential immersion and fold non-degeneracy along the critical line determined by their vantage points.

Let $q_1,q_2\in\mathbb{R}^2$ be distinct vantage points and write
$$
L_{\mathcal{Q}}
:=
\Aff\{q_1,q_2\}.
$$
The radial projection based at $q_j$ is
$$
\pi_{q_j}(x)
:=
\frac{x-q_j}{|x-q_j|},
\qquad \forall x \in \mathbb{R}^2 \setminus \{q_j\}.
$$
Near any point different from $q_1$ and $q_2$, we may choose smooth angle lifts $\varphi_1$ and $\varphi_2$ satisfying
$$
\pi_{q_j}(x)
=
\big(\cos\varphi_j(x),\sin\varphi_j(x)\big).
$$
On any common open domain $U$ for such lifts, define
$$
\mathsf{H}_{\mathcal{Q}}(x)
:=
\big(\varphi_1(x),\varphi_2(x)\big),
$$
where $\varphi_j$ is thus the \textit{planar radial angle map} associated to the vantage point $q_j$.

We then have the following result detailing the underlying geometry of radial angle maps (and, by immediate consequence, radial projections).

\begin{proposition}[Critical Geometry of Planar Radial Projections]
\label{prop:radialprojectioncriticalgeometry-examples}
On every common angle-chart domain $U\subseteq\mathbb{R}^2\setminus\{q_1,q_2\}$, one has
$$
\Sigma_{\mathcal{Q}}\cap U
=
L_{\mathcal{Q}}\cap U.
$$
Thus the pair of radial angle maps satisfies the critical-hypersurface condition locally. At every $x\in L_{\mathcal{Q}}\cap U$, however,
$$
\rank\big(
D_x\mathsf{H}_{\mathcal{Q}}(x)
\vert_{T_xL_{\mathcal{Q}}}
\big)
=0.
$$
Consequently, the tangential-immersion condition fails along the critical line. The encoding is also not fold non-degenerate there.
\end{proposition}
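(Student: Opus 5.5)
The plan is a direct computation of the gradients of the angle lifts, after which each claim follows by linear algebra. On any chart domain where $\varphi_j$ is a smooth lift of the angle of $x-q_j$, the standard formula for the gradient of the polar angle gives
$$
\nabla\varphi_j(x)=\frac{(x-q_j)^{\perp}}{|x-q_j|^2},\qquad (a,b)^{\perp}:=(-b,a).
$$
This formula does not depend on the choice of lift, since any two lifts differ by a constant in $2\pi\mathbb{Z}$. Hence
$$
J_{\mathcal{Q}}(x)=\det D_x\mathsf{H}_{\mathcal{Q}}(x)=\frac{\det\big((x-q_1)^{\perp},(x-q_2)^{\perp}\big)}{|x-q_1|^2|x-q_2|^2}=\frac{\det(x-q_1,x-q_2)}{|x-q_1|^2|x-q_2|^2},
$$
because rotation by $\pi/2$ preserves determinants. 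Next, $\det(x-q_1,x-q_2)=\det(x-q_1,q_1-q_2)$, which is an affine function of $x$ with nonzero linear part. It vanishes exactly when $x-q_1$ is parallel to $q_2-q_1$, that is, when $x\in L_{\mathcal{Q}}$. This gives $\Sigma_{\mathcal{Q}}\cap U=L_{\mathcal{Q}}\cap U$. Since $L_{\mathcal{Q}}$ is a line, it is a smooth embedded hypersurface, so the critical-hypersurface condition holds locally.

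For the tangential rank, fix $x\in L_{\mathcal{Q}}\cap U$. The tangent space $T_xL_{\mathcal{Q}}$ is spanned by $u:=q_2-q_1$. Both $x-q_1$ and $x-q_2$ are parallel to $u$, so each $\nabla\varphi_j(x)$ is a multiple of $u^{\perp}$. Therefore $D_x\varphi_j(x)[u]=\langle\nabla\varphi_j(x),u\rangle=0$ for $j=1,2$, and the restriction of $D_x\mathsf{H}_{\mathcal{Q}}(x)$ to $T_xL_{\mathcal{Q}}$ is the zero map, which has rank $0$. This is geometrically clear: moving along the line through both vantage points does not change either viewing angle. Since $d-1=1>0$, the tangential-immersion condition fails.

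The fold condition is the only step needing care, and I expect it to be the main (mild) obstacle because the statement is phrased negatively. It suffices to exhibit one violation of one of the two clauses of Definition~\ref{def:foldnondegeneracy-genexample}. At $x\in L_{\mathcal{Q}}\cap U$ both rows of $D_x\mathsf{H}_{\mathcal{Q}}(x)$ are nonzero multiples of $u^{\perp}$, because $x\neq q_j$. So the rank is exactly $1=d-1$, and rank stability holds. The kernel is then $\vspan\{u\}=T_xL_{\mathcal{Q}}$. Write $J_{\mathcal{Q}}=A/B$ with $A(x)=\det(x-q_1,q_1-q_2)$ and $B>0$ smooth on $U$. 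Since $A$ vanishes on $L_{\mathcal{Q}}$, we get $D_xJ_{\mathcal{Q}}(x)=DA(x)/B(x)$ there. Also $DA(x)[u]=\det(u,q_1-q_2)=0$. Hence $D_xJ_{\mathcal{Q}}(x)[u]=0$ for the nonzero kernel vector $u$. The kernel-transversality condition \eqref{eq:foldkerneltransversality-genexample} therefore fails: the kernel direction is tangent to the critical line rather than transverse to it. So the encoding is not fold non-degenerate at any point of $L_{\mathcal{Q}}\cap U$, which completes the argument.
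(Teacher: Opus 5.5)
Your proposal is correct and follows essentially the same route as the paper. Both arguments use the gradient formula $\nabla\varphi_j(x)=(x-q_j)^{\perp}/|x-q_j|^2$ and the resulting Jacobian $J_{\mathcal{Q}}(x)=\det(x-q_1,x-q_2)/(|x-q_1|^2|x-q_2|^2)$. Both observe that the gradients are normal to $L_{\mathcal{Q}}$ along the line. Both conclude that kernel transversality fails because the kernel is exactly $T_xL_{\mathcal{Q}}$, along which $J_{\mathcal{Q}}$ vanishes identically. Your explicit checks that the rank is exactly one (since $x\neq q_j$) and that $DA(x)[u]=\det(u,q_1-q_2)=0$ simply spell out steps the paper states more briefly.
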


\begin{proof}
Let $R(a,b):=(-b,a)$ denote counterclockwise rotation through $\pi/2$. A direct calculation gives that 
$$
\nabla\varphi_j(x)
=
\frac{R(x-q_j)}{|x-q_j|^2},
\qquad j=1,2.
$$
It follows that
\begin{equation}\label{eq:radialprojectionjacobian-examples}
J_{\mathcal{Q}}(x)
:=
\det D_x\mathsf{H}_{\mathcal{Q}}(x)
=
\frac{\det(x-q_1,x-q_2)}
{|x-q_1|^2|x-q_2|^2},
\end{equation}
up to a harmless choice of orientation. The numerator vanishes exactly when $x,q_1,q_2$ are collinear. Hence the critical set in $U$ is $L_{\mathcal{Q}}\cap U$.

If $x\in L_{\mathcal{Q}}\cap U$, then both vectors $x-q_j$ are parallel to $T_xL_{\mathcal{Q}}$, so their rotations are normal to the line. Thus each differential $D_x\varphi_j(x)$ vanishes on $T_xL_{\mathcal{Q}}$, which means that
$$
D_x\mathsf{H}_{\mathcal{Q}}(x)
\vert_{T_xL_{\mathcal{Q}}}=0.
$$
This is precisely the failure of tangential immersion.

At such a point the matrix $D_x\mathsf{H}_{\mathcal{Q}}(x)$ has rank one and its kernel is exactly $T_xL_{\mathcal{Q}}$. Since $J_{\mathcal{Q}}$ vanishes identically along the line,
$$
D_xJ_{\mathcal{Q}}(x)[v]=0,
\qquad
\forall v\in\ker D_x\mathsf{H}_{\mathcal{Q}}(x),
$$
which is precisely the negation of the fold non-degeneracy condition. \qedhere
\end{proof}

Although fold non-degeneracy fails on the critical line, the noncritical radial-projection conclusion remains open on compact sets disjoint from this closed line and hence at positive distance from it.

\begin{proposition}[Stable Radial Witnesses Away from the Critical Line]
\label{prop:stableradialwitnesses-examples}
Fix distinct points $q_1^0,q_2^0\in\mathbb{R}^2$ and a compact set
$$
K\subseteq
\mathbb{R}^2\setminus\Aff\{q_1^0,q_2^0\}.
$$
There exist open neighbourhoods $q_j^0\in U_j\subseteq\mathbb{R}^2$, $j=1,2$, such that
$$
K\cap\Aff\{q_1,q_2\}=\emptyset
\qquad
\text{for every }(q_1,q_2)\in U_1\times U_2.
$$
Moreover, for every Borel $1$-rectifiable set $E\subseteq K$ satisfying $\mathcal{H}^1(E)>0$, there is an index $j_E\in\{1,2\}$ for which
$$
\mathcal{H}^1\big(\pi_q(E)\big)>0
\qquad
\text{for every }q\in U_{j_E}.
$$
\end{proposition}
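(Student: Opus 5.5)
The plan is to prove the two claims separately: first a purely geometric statement about lines through nearby vantage points, then the witness statement by a compactness argument that reduces to the $d$-lines corollary of Federer's theorem.

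\emph{Step 1 (separation from the line).} Set $\delta:=\operatorname{dist}(K,\Aff\{q_1^0,q_2^0\})>0$, which is positive because $K$ is compact and the line is closed. The function
$$
F(q_1,q_2,x):=\frac{|\det(x-q_1,q_2-q_1)|}{|q_2-q_1|}
$$
is the distance from $x$ to $\Aff\{q_1,q_2\}$. It is continuous on $\{q_1\neq q_2\}\times\mathbb{R}^2$. Since $K$ is compact, $\min_{x\in K}F(q_1,q_2,x)$ depends continuously on $(q_1,q_2)$. We therefore choose small balls $U_j$ around $q_j^0$ such that this minimum exceeds $\delta/2$ on $U_1\times U_2$, with the balls disjoint and $\overline{U_j}\cap K=\emptyset$. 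This gives $K\cap\Aff\{q_1,q_2\}=\emptyset$ for every pair in $U_1\times U_2$.

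\emph{Step 2 (local bilipschitz charts, uniformly in the pins).} Cover $K$ by finitely many small discs $B_1,\ldots,B_N$. Each disc should be small enough that a smooth angle lift $\varphi_q$ of $\pi_q$ exists on $B_i$ simultaneously for all $q\in\overline{U_1}\cup\overline{U_2}$; this is possible since these pins stay at positive distance from $K$. By the Jacobian formula \eqref{eq:radialprojectionjacobian-examples} and Step 1, $|J_{(q_1,q_2)}(x)|\ge c>0$ uniformly for $x$ in a neighbourhood of $K$ and $(q_1,q_2)\in\overline{U_1}\times\overline{U_2}$, after possibly shrinking the $U_j$.

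Fix a rectifiable $E\subseteq K$ with $\mathcal{H}^1(E)>0$. Then $\mathcal{H}^1(E\cap B_i)>0$ for some $i$; replace $E$ by $E\cap B_i$. At a density point $x_0$ of $E$ where an approximate tangent line $\ell$ exists, the set $E$ is concentrated near $x_0+\ell$ on small scales. The gradients $\nabla\varphi_q(x_0)=R(x_0-q)/|x_0-q|^2$ for $q\in U_1$ and $q\in U_2$ are not both orthogonal to $\ell$. Indeed, for fixed pins the two gradients are linearly independent by the Jacobian bound, so at least one of them, say the one with index $j_E$, satisfies $\langle\nabla\varphi_{q_{j_E}^0}(x_0),\ell\rangle\neq0$.

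\emph{Step 3 (positivity for every pin in $U_{j_E}$).} This is the main obstacle, because the index must work for all $q\in U_{j_E}$ at once, whereas Federer's theorem only produces a witness for one fixed pair. I would handle this as follows. Shrink $B_i$ and restrict $E$ to a subset $E'$ of positive length lying on a single $C^1$ curve $\gamma$ whose tangent stays within a small angle of $\ell$; this uses rectifiability and Lusin-type approximation by $C^1$ curves. Then, after shrinking $U_{j_E}$ once more so that the bound depends only on the finitely many charts and not on $E$, we have $|\langle\nabla\varphi_q,\gamma'\rangle|\ge c'>0$ on $E'$ for all $q\in U_{j_E}$. Consequently $\varphi_q\circ\gamma$ is bilipschitz on the relevant parameter interval, so $\mathcal{H}^1(\varphi_q(E'))>0$, and hence $\mathcal{H}^1(\pi_q(E))>0$.

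The delicate point is that the smallness of $U_j$ must be fixed before $E$ is chosen. To arrange this, I would choose the $U_j$ small enough that, for every $x$ near $K$ and every unit direction $v$, one has
$$
\max_{j}\ \min_{q\in U_j}|\langle\nabla\varphi_q(x),v\rangle|\ \ge\ c''>0.
$$
This follows from compactness of $K\times S^1$ together with the uniform linear independence of the two gradients given by the Jacobian bound. The index $j_E$ is then taken to be the $j$ attaining this maximum for $(x_0,\ell)$, which makes the neighbourhoods independent of $E$, as required.
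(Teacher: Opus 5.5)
Your argument is correct, but it takes a substantially longer route than the paper, which never has to confront what you call the main obstacle. After shrinking $U_1\times U_2$ so that $\det(x-q_1,x-q_2)\neq0$ on $U_1\times U_2\times K$ (essentially your Step 1), the paper applies the fixed-pair result, Corollary~\ref{cor:dcurves-genexample} (via finitely many common angle charts), to each pair $(q_1,q_2)\in U_1\times U_2$ separately. This shows that no pair consists of two bad vantage points, that is, $(U_1\cap\mathcal{B}_E)\times(U_2\cap\mathcal{B}_E)=\emptyset$, so one factor is empty. Uniformity over $q\in U_{j_E}$, and independence of the neighbourhoods from $E$, then come for free from this pigeonhole, with no further shrinking and no quantitative estimate.

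You instead choose $j_E$ from the tangent direction of $E$ at one point. You then use a lower bound $\max_j\inf_{q\in U_j}|\langle\nabla\varphi_q(x),v\rangle|\geq c''$ on $K\times S^1$, fixed before $E$, together with equicontinuity in $(x,v)$ over $q\in\overline{U_j}$ and a $C^1$ curve carrying a positive-length piece $E'$ of $E$. This is valid, and it actually bypasses Federer's theorem entirely, despite your opening plan. It also gives more than the paper's proof: an explicit rule for the good index, and a lower bound $\mathcal{H}^1(\pi_q(E))\geq\frac{c''}{2}\mathcal{H}^1(E')$ that is uniform in $q\in U_{j_E}$.

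The cost is extra machinery (covering rectifiable sets by $C^1$ curves, tangent selection, equicontinuity) and reliance on the explicit gradient formula. By contrast, the paper's product argument is purely set-theoretic once the fixed-tuple result is known. It is the same device that drives Theorem~\ref{thm:localgoodparameterneighbourhood-genexample} and Theorem~\ref{thm:stablegoodwitnessneighbourhoods-genexample}.

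One cleanup: the phrase ``after shrinking $U_{j_E}$ once more'' in Step 3 would make the neighbourhood depend on $E$. Your final paragraph is what actually makes the argument work, so that shrinking should be deleted, along with the now-unused first choice of $j_E$ in Step 2.
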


\begin{proof}
The determinant numerator in \eqref{eq:radialprojectionjacobian-examples} depends continuously on $(q_1,q_2,x)$ and is nonzero on the compact set $\{(q_1^0,q_2^0)\}\times K$. After shrinking product neighbourhoods $U_1\times U_2$, it remains nonzero for every $(q_1,q_2,x)\in U_1\times U_2\times K$. Thus every corresponding encoding has no critical point in $K$.

For each pair in this product, a finite localization to common angle charts followed by Corollary~\ref{cor:dcurves-genexample} shows that at least one of its two radial maps sends $E$ to a set of positive length.

Let $\mathcal{B}_E$ denote the bad vantage points for $E$. We have therefore proved
$$
(U_1\cap\mathcal{B}_E)
\times
(U_2\cap\mathcal{B}_E)
=
\emptyset.
$$
At least one factor must be empty, which proves the asserted stable good-witness conclusion.\qedhere
\end{proof}

As was previously observed in \cite{OS2011} and also in \cite{BBMTCompanion}, the failure of the good witness property is geometrically sharp. Indeed, if a positive-length rectifiable set $E$ is contained in $L_{\mathcal{Q}}$, then each of $\pi_{q_1}(E\setminus\{q_1\})$ and $\pi_{q_2}(E\setminus\{q_2\})$ contains at most two points. Thus both radial images have zero length. On the other hand, if
$$
\mathcal{H}^1(E\setminus L_{\mathcal{Q}})>0,
$$
then a countable localization to common angle charts, followed by Corollary~\ref{cor:dcurves-genexample}, shows that at least one of the two radial images has positive length. See also the work of Orponen and Sahlsten on radial projections of rectifiable sets \cite{OS2011}. 

In summary, the critical-hypersurface condition identifies the only possible obstruction, while the failure of tangential immersion explains why that obstruction is naturally and intrinsically excluded by the non-flatness hypothesis on $E$. It also explains the precise limit of parameter-space openness. Moreover, Proposition~\ref{prop:stableradialwitnesses-examples} is uniform only while the compact spatial set stays away from the moving critical line. No global analogue of Corollary~\ref{cor:stablepinneddistancewitnesses-examples} can hold for a radial pair: for each tuple $(q_1,q_2)$, a positive-length subset of $\Aff\{q_1,q_2\}$ has zero-length radial image from both vantage points.

\section{Generalized curve projections: framework and proofs}\label{sec:generalizedcurveprojections-genexample}

The discussion now turns from the specific pinned-distance and radial-projection models to \emph{generalized curve projections}. These are \emph{families} of mappings $\mathsf{\Phi} := \{\varphi_{\alpha}\}_{\alpha \in \mathcal{A}}$, which are indexed by a common topological space $\mathcal{A}$ (which is often assumed to be an open subset of real Euclidean space). As in earlier sections, a family of $C^2$ scalar mappings $\varphi_1,\ldots,\varphi_d : \mathbb{R}^d \rightarrow \mathbb{R}$ is again combined into a single canonical encoding map $\mathsf{H} : \mathbb{R}^d \rightarrow \mathbb{R}^d$. At a positive distance from the set of critical points of $\mathsf{H}$, the Manifold Inverse Function Theorem (which we have stated as Proposition~\ref{prop:manifoldinversefunction-diffgeo} in the Appendix) provides a locally bilipschitz change of variables, after which Federer's Projection Theorem applies.

\subsection{A variant of Federer's Theorem for generalized curve projections}\label{subsec:regularcurveprojections-genexample}

Fix a family
$$
\mathsf{\Phi} := \{\varphi_1,\ldots,\varphi_d\}
$$
of $C^2$ mappings $\varphi_j : \mathbb{R}^d \rightarrow \mathbb{R}$, whose associated canonical encoding map $\mathsf{H} :=\mathsf{H}_{\mathsf{\Phi}} : \mathbb{R}^d \rightarrow \mathbb{R}^d$ is given by
$$
\mathsf{H}_{\mathsf{\Phi}}(x)
:=
\big(\varphi_1(x),\ldots,\varphi_d(x)\big),
\quad x \in \mathbb{R}^d.
$$
We define its set of critical points by
$$
\Sigma_{\mathsf{\Phi}}
:=
\big\{x \in \mathbb{R}^d : \rank D_x\mathsf{H}_{\mathsf{\Phi}}(x) \leq d-1\big\}.
$$

\begin{remark}
Although the results are stated for mappings defined on all of $\mathbb{R}^d$, each argument is local and remains valid, with no change in the proof, for mappings $\varphi_j : \Omega \rightarrow \mathbb{R}$ defined on a common non-empty and open subset $\Omega \subseteq \mathbb{R}^d$. This observation will be used later when generalized distance functions are discussed.
\end{remark}

The first result records the compact localization principle underlying the arguments from the previous sections.

\begin{theorem}[A Variant of Federer's Theorem for Curve Projections]\label{thm:curvedfed-genexample}
Let 
$$
\mathsf{\Phi}:=\{\varphi_1,\ldots,\varphi_d\}
$$
be a family of $C^2$ mappings $\varphi_j : \mathbb{R}^d \rightarrow \mathbb{R}$ with associated canonical encoding map 
$$
\mathsf{H}_{\mathsf{\Phi}}(x):=\big(\varphi_1(x),\ldots,\varphi_d(x)\big),
\qquad x\in\mathbb{R}^d.
$$
Suppose that $E \subset \mathbb{R}^d$ is a Borel $1$-rectifiable set contained in a compact set $K \subset \mathbb{R}^d$ satisfying
$$
K \cap \Sigma_{\mathsf{\Phi}} = \emptyset.
$$
Then there exists a constant $C_{\mathsf{\Phi},K}>0$ such that
\begin{equation}\label{eq:curvedfedererinequality-genexample}
\mathcal{H}^1(E)
\leq
C_{\mathsf{\Phi},K}
\sum_{j=1}^{d}
\int_{\mathbb{R}}
\mathcal{H}^0\big(\varphi_j^{-1}(t) \cap E\big)
\,d\mathcal{H}^1(t).
\end{equation}
\end{theorem}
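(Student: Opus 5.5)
The plan is to cover $K$ by finitely many balls on which $\mathsf{H}_{\mathsf{\Phi}}$ is bilipschitz onto its image, apply Federer's Projection Theorem (Theorem~\ref{thm:federerprojection-intro}) to the image of each piece of $E$, and sum. Since $K\cap\Sigma_{\mathsf{\Phi}}=\emptyset$, the derivative $D_x\mathsf{H}_{\mathsf{\Phi}}(x)$ is invertible at every $x\in K$. By the Inverse Function Theorem (Proposition~\ref{prop:manifoldinversefunction-diffgeo}), each $x\in K$ has an open ball $B_x$ such that $\mathsf{H}:=\mathsf{H}_{\mathsf{\Phi}}|_{B_x}$ is a $C^1$ diffeomorphism onto an open set. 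Shrinking $B_x$ to a ball with compact closure inside that neighbourhood, both $\mathsf{H}$ and its inverse are Lipschitz there, with constants controlled by $\sup\|D\mathsf{H}\|$ and $\sup\|(D\mathsf{H})^{-1}\|$ on $\overline{B_x}$. By compactness, finitely many balls $B_1,\ldots,B_N$ cover $K$. Let $L$ be the maximum over $i$ of the Lipschitz constants of $(\mathsf{H}|_{B_i})^{-1}$.

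Partition $E$ into the Borel pieces $E_i:=E\cap B_i\setminus\bigcup_{k<i}B_k$. Each $E_i$ is $1$-rectifiable, and so is $\mathsf{H}(E_i)$, since Lipschitz images of rectifiable sets are rectifiable and $\mathsf{H}$ is Lipschitz on $B_i$. Because $(\mathsf{H}|_{B_i})^{-1}$ is $L$-Lipschitz,
$$
\mathcal{H}^1(E_i)\le L\,\mathcal{H}^1(\mathsf{H}(E_i)).
$$
Apply Theorem~\ref{thm:federerprojection-intro} to $\mathsf{H}(E_i)$ with the standard basis. The $j$-th coordinate projection composed with $\mathsf{H}$ is exactly $\varphi_j$, and $\mathsf{H}$ is injective on $E_i$. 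Hence
$$
\mathcal{H}^0\big(\pi_j^{-1}(t)\cap\mathsf{H}(E_i)\big)=\mathcal{H}^0\big(\varphi_j^{-1}(t)\cap E_i\big),
$$
and therefore
$$
\mathcal{H}^1(\mathsf{H}(E_i))\le\sum_{j=1}^d\int_{\mathbb{R}}\mathcal{H}^0\big(\varphi_j^{-1}(t)\cap E_i\big)\,d\mathcal{H}^1(t).
$$

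Summing over $i$ gives $\mathcal{H}^1(E)=\sum_i\mathcal{H}^1(E_i)$. The $E_i$ are disjoint, so for each $t$ we have $\sum_i\mathcal{H}^0(\varphi_j^{-1}(t)\cap E_i)=\mathcal{H}^0(\varphi_j^{-1}(t)\cap E)$, and monotone convergence lets the sum over $i$ pass inside the integral. This yields \eqref{eq:curvedfedererinequality-genexample} with $C_{\mathsf{\Phi},K}=L$. The constant depends only on $\mathsf{\Phi}$ and $K$, because the covering is chosen before $E$.

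The only delicate step is the measurability of the counting functions $t\mapsto\mathcal{H}^0(\varphi_j^{-1}(t)\cap E_i)$ and the applicability of Federer's theorem to images of Borel sets. I would handle this as follows. On $B_i$, the set $\mathsf{H}(E_i)$ is Borel, being the image of a Borel set under a homeomorphism onto an open set. Federer's theorem already includes the measurability of the corresponding multiplicity functions on the image side, and these transfer to the $E_i$ through the identity above. If needed, one can replace Borel by $\mathcal{H}^1$-measurable and use Federer's statement for $\mathcal{H}^1$-measurable rectifiable sets. None of this affects the constant.
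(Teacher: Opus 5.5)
Your proposal is correct and follows the paper's proof essentially step for step: a finite cover of $K$ by neighbourhoods on which $\mathsf{H}_{\mathsf{\Phi}}$ is bilipschitz, the disjoint Borel partition $E_i:=E\cap B_i\setminus\bigcup_{k<i}B_k$, Federer's Projection Theorem applied to each $\mathsf{H}_{\mathsf{\Phi}}(E_i)$ with the counting-function identity coming from injectivity, and summation with $C_{\mathsf{\Phi},K}$ equal to the largest inverse Lipschitz constant. Your extra care about Borel measurability and the monotone-convergence interchange makes explicit what the paper leaves implicit. One phrase is slightly loose: the mean value theorem does not directly bound the inverse Lipschitz constant by $\sup\|(D\mathsf{H})^{-1}\|$, since $\mathsf{H}(B_x)$ need not be convex. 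The constant still exists, however, because a $C^1$ map is Lipschitz on compact subsets of its open domain.
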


\begin{proof}
The critical set $\Sigma_{\mathsf{\Phi}}$ is closed, and $D_x\mathsf{H}_{\mathsf{\Phi}}(p)$ is invertible at every $p\in K$. The Inverse Function Theorem, in the form recorded in Proposition~\ref{prop:manifoldinversefunction-diffgeo}, therefore gives a sufficiently small open neighbourhood $p\in V_p\subset\mathbb{R}^d$ such that
$$
\mathsf{H}_{\mathsf{\Phi}}\big\vert_{V_p}
:
V_p \rightarrow \mathsf{H}_{\mathsf{\Phi}}(V_p)
$$
is bilipschitz.

By the compactness of $K$, we may choose a finite subcover $V_1,\ldots,V_N$ of such neighbourhoods, and then form a pairwise-disjoint partition of $E$ subordinate to this cover by setting
$$
E_1 := E \cap V_1,
\qquad
E_m := E \cap \bigg(V_m \setminus \bigcup_{\ell=1}^{m-1}V_\ell\bigg),
\quad 2 \leq m \leq N.
$$
For each $m$, let
$$
A_m := \mathsf{H}_{\mathsf{\Phi}}(E_m)
$$
and let
$$
L_m
:=
\operatorname{Lip}\bigg(
\big(\mathsf{H}_{\mathsf{\Phi}}\vert_{V_m}\big)^{-1}
\bigg).
$$
Since $\mathsf{H}_{\mathsf{\Phi}}\vert_{V_m}$ is bilipschitz, the set $A_m$ is $1$-rectifiable and
$$
\mathcal{H}^1(E_m) \leq L_m\mathcal{H}^1(A_m).
$$
We now apply Theorem~\ref{thm:federerprojection-intro} to $A_m$. The pointwise identity
$$
\pi_j\big(\mathsf{H}_{\mathsf{\Phi}}(x)\big)=\varphi_j(x)
$$
and the injectivity of $\mathsf{H}_{\mathsf{\Phi}}\vert_{V_m}$ give
\begin{equation}\label{eq:pointwisecountcorrespondence-genexample}
\mathcal{H}^0\big(\pi_j^{-1}(t) \cap A_m\big)
=
\mathcal{H}^0\big(\varphi_j^{-1}(t) \cap E_m\big)
\end{equation}
for every $t \in \mathbb{R}$ and every $j \in \{1,\ldots,d\}$. Consequently,
$$
\mathcal{H}^1(E_m)
\leq
L_m
\sum_{j=1}^{d}
\int_{\mathbb{R}}
\mathcal{H}^0\big(\varphi_j^{-1}(t) \cap E_m\big)
\,d\mathcal{H}^1(t).
$$
Since the sets $E_m$ are pairwise disjoint and $\mathcal{H}^0$ is counting measure, summing over the partition $E=\bigsqcup_{m=1}^{N}E_m$ proves \eqref{eq:curvedfedererinequality-genexample}, with
\[
C_{\mathsf{\Phi},K} := \max_{1\leq m\leq N}L_m.\qedhere
\]
\end{proof}

In exactly the same vein as the applications of Federer's Projection Theorem, the previous quantitative estimate has the following qualitative positive-image consequence.

\begin{corollary}[$d$-Curve Projections]\label{cor:dcurves-genexample}
Let $\mathsf{\Phi}:=\{\varphi_1,\ldots,\varphi_d\}$ and $\Sigma_{\mathsf{\Phi}}$ be as above. Suppose that $E \subset \mathbb{R}^d$ is a Borel $1$-rectifiable set satisfying
\begin{equation}\label{eq:rectifiableavoidssingular-genexample}
\mathcal{H}^1\big(E \setminus \Sigma_{\mathsf{\Phi}}\big)>0.
\end{equation}
Then there necessarily exists some $j^* \in \{1,\ldots,d\}$ such that
$$
\mathcal{H}^1\big(\varphi_{j^*}(E)\big)>0.
$$
\end{corollary}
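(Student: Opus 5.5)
The plan is to reduce Corollary~\ref{cor:dcurves-genexample} to Theorem~\ref{thm:curvedfed-genexample} by compact exhaustion of the regular region. Since $\Sigma_{\mathsf{\Phi}}$ is closed, as the zero set of the continuous function $J(x)=\det D_x\mathsf{H}_{\mathsf{\Phi}}(x)$, its complement is open. So we can write
$$
\mathbb{R}^d\setminus\Sigma_{\mathsf{\Phi}}=\bigcup_{n=1}^\infty K_n,
\qquad
K_n:=\big\{x:|x|\leq n,\ \operatorname{dist}(x,\Sigma_{\mathsf{\Phi}})\geq 1/n\big\},
$$
with each $K_n$ compact and disjoint from $\Sigma_{\mathsf{\Phi}}$. (If $\Sigma_{\mathsf{\Phi}}=\emptyset$, simply take $K_n=\overline{B(0,n)}$.) By countable subadditivity, hypothesis \eqref{eq:rectifiableavoidssingular-genexample} gives some $n$ with $\mathcal{H}^1(E\cap K_n)>0$.

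Next, set $E':=E\cap K_n$. It is Borel and $1$-rectifiable, since it is a subset of a rectifiable set, and it lies in the compact set $K_n$ with $K_n\cap\Sigma_{\mathsf{\Phi}}=\emptyset$. We may also assume $\mathcal{H}^1(E')<\infty$: if not, we intersect further with a small ball or use the $\sigma$-finiteness of rectifiable sets to pass to a subset of finite positive measure. This is harmless, because shrinking $E$ only shrinks the images $\varphi_j(E)$, and we want a lower bound. Theorem~\ref{thm:curvedfed-genexample} applied to $E'$ then gives
$$
0<\mathcal{H}^1(E')\leq C_{\mathsf{\Phi},K_n}\sum_{j=1}^d\int_{\mathbb{R}}\mathcal{H}^0\big(\varphi_j^{-1}(t)\cap E'\big)\,d\mathcal{H}^1(t),
$$
so some summand, indexed by $j^*$, is strictly positive.

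Finally, the integrand $t\mapsto\mathcal{H}^0(\varphi_{j^*}^{-1}(t)\cap E')$ vanishes for $t\notin\varphi_{j^*}(E')$. Hence a positive integral forces $\mathcal{H}^1(\varphi_{j^*}(E'))>0$, and monotonicity gives $\mathcal{H}^1(\varphi_{j^*}(E))>0$.

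The only delicate point is measurability of the multiplicity function and of $\varphi_{j^*}(E')$, which is needed to make the last step rigorous. This is already implicit in Theorem~\ref{thm:curvedfed-genexample}, through Federer's theorem applied to the images $A_m$. In any case, if the integral of a nonnegative function supported in a set $S$ is positive, then the outer measure of $S$ is positive, so the conclusion holds regardless. I do not expect any step to be a real obstacle; the exhaustion step is the only thing requiring care.
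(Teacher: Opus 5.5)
Your proposal is correct and is essentially the paper's proof. You exhaust $\mathbb{R}^d\setminus\Sigma_{\mathsf{\Phi}}$ by compact sets at positive distance from the closed critical set, select one meeting $E$ in positive length, apply Theorem~\ref{thm:curvedfed-genexample}, and use that the multiplicity integrand vanishes off $\varphi_{j^*}(E)$. Your single-index exhaustion (the paper uses a double index $(m,n)$) and the extra reduction to $\mathcal{H}^1(E')<\infty$ are harmless refinements. For that reduction, rely on the $\sigma$-finiteness route rather than the small ball: a rectifiable set in the paper's sense can have infinite $\mathcal{H}^1$ measure in every small ball.
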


\begin{proof}
For each $m,n \in \mathbb{N}$, define
$$
E^{(m,n)}
:=
E \cap \overline{B(0,m)}
\cap
\big\{x \in \mathbb{R}^d : \operatorname{dist}(x,\Sigma_{\mathsf{\Phi}}) \geq 1/n\big\}.
$$
Since $\Sigma_{\mathsf{\Phi}}$ is closed,
$$
E \setminus \Sigma_{\mathsf{\Phi}}
=
\bigcup_{m,n\in\mathbb{N}}E^{(m,n)}.
$$
It follows from \eqref{eq:rectifiableavoidssingular-genexample} that
$$
\mathcal{H}^1\big(E^{(m_0,n_0)}\big)>0
$$
for at least one pair $(m_0,n_0)$. The set $E^{(m_0,n_0)}$ is contained in the compact set
$$
K_{m_0,n_0}
:=
\overline{B(0,m_0)}
\cap
\big\{x \in \mathbb{R}^d : \operatorname{dist}(x,\Sigma_{\mathsf{\Phi}}) \geq 1/n_0\big\},
$$
which is disjoint from $\Sigma_{\mathsf{\Phi}}$. Theorem~\ref{thm:curvedfed-genexample} therefore gives
$$
0
<
\mathcal{H}^1\big(E^{(m_0,n_0)}\big)
\leq
C_{\mathsf{\Phi},K_{m_0,n_0}}
\sum_{j=1}^{d}
\int_{\mathbb{R}}
\mathcal{H}^0\big(\varphi_j^{-1}(t) \cap E^{(m_0,n_0)}\big)
\,d\mathcal{H}^1(t).
$$
Hence, at least one of the integrals on the right-hand side is positive. Since the integrand associated with $\varphi_j$ vanishes outside $\varphi_j(E)$, there exists some $j^*$ such that
\[
\mathcal{H}^1\big(\varphi_{j^*}(E)\big)>0. \qedhere
\]
\end{proof}

When $\varphi_j:=\pi_j$, the canonical encoding map is the identity on $\mathbb{R}^d$, so $\Sigma_{\mathsf{\Phi}}=\emptyset$. For the nonlinear families considered here, the possible presence of a critical set is the new obstruction and explains why a condition such as \eqref{eq:rectifiableavoidssingular-genexample} is a natural first sufficient hypothesis.

\subsubsection{Applying the critical hypersurface and tangential immersion conditions}\label{subsec:tangentialimmersion-genexample}
Using the critical-hypersurface and tangential-immersion conditions, together with Corollary~\ref{cor:dcurves-genexample} as a base case for our argument, we are now in a position to prove Theorem~\ref{thm:dcurvesrefined-genexample}, which is the central new result of this article.

\begin{proof}[Proof of Theorem~\ref{thm:dcurvesrefined-genexample}]
If
$
\mathcal{H}^1\big(E \setminus \Sigma_{\mathsf{\Phi}}\big)>0,
$
then the result follows immediately from Corollary~\ref{cor:dcurves-genexample}. We may therefore assume that
$$
\mathcal{H}^1\big(E \cap \Sigma_{\mathsf{\Phi}}\big)>0.
$$
Let $M \subset \mathbb{R}^d$ be the $(d-1)$-dimensional $C^1$ submanifold which is provided by the critical-hypersurface condition. Observe, then, that we must have
$$
E \cap \Sigma_{\mathsf{\Phi}} \subseteq M.
$$
For each index set
$$
I=\{i_1,\ldots,i_{d-1}\}\subset\{1,\ldots,d\},
$$
define
$$
\mathsf{H}^I(x)
:=
\big(\varphi_{i_1}(x),\ldots,\varphi_{i_{d-1}}(x)\big)
$$
and let
$$
\widetilde{\mathsf{H}}^I
:=
\mathsf{H}^I\vert_M : M \rightarrow \mathbb{R}^{d-1}.
$$

We can then apply the tangential-immersion condition (or, more precisely, its equivalent form given by Proposition~\ref{prop:restrictedrankequivalences-diffgeo}) to guarantee that, at every point $p \in M$, at least one such index set $I_p$ satisfies
$$
\rank D\widetilde{\mathsf{H}}^{I_p}(p)=d-1.
$$
For each such indexing set $I$, define the relatively open subset
$$
U_I
:=
\big\{p \in M : \rank D\widetilde{\mathsf{H}}^I(p)=d-1\big\}.
$$
As there are finitely many choices of $I$, the sets $U_I$ form a finite open cover of $M$; hence, there must exist at least one fixed index set $I$ such that
$$
\mathcal{H}^1\big(E \cap \Sigma_{\mathsf{\Phi}} \cap U_I\big)>0.
$$

By the Manifold Inverse Function Theorem (see Proposition~\ref{prop:manifoldinversefunction-diffgeo}), every point $p \in U_I$ has a relatively open neighbourhood $p \in V_p \subset M$ on which
$$
\widetilde{\mathsf{H}}^I\vert_{V_p}
:
V_p \rightarrow \widetilde{\mathsf{H}}^I(V_p)
$$
is a $C^1$ diffeomorphism. After shrinking $V_p$ if necessary, this restricted function is also a bilipschitz map. Since $M$ is an embedded submanifold, it is second countable, and hence we may choose a countable collection of these neighbourhoods covering $U_I$. It follows that at least one such neighbourhood $V_{p_0}$ satisfies
$
\mathcal{H}^1(F)>0,
$
where we have
$$
F
:=
E \cap \Sigma_{\mathsf{\Phi}} \cap U_I \cap V_{p_0}.
$$
The set
$$
A
:=
\widetilde{\mathsf{H}}^I(F)
\subset \mathbb{R}^{d-1}
$$
is therefore a $1$-rectifiable set of positive $\mathcal{H}^1$ measure. If $d=2$, then $I=\{i_1\}$ and
$$
\mathcal{H}^1\big(\varphi_{i_1}(E)\big)
\geq
\mathcal{H}^1\big(\varphi_{i_1}(F)\big)
=
\mathcal{H}^1(A)
>0,
$$
so the conclusion follows directly. We may therefore assume that $d\geq3$. Applying Federer's Projection Theorem in $\mathbb{R}^{d-1}$, there exists an index $\ell \in \{1,\ldots,d-1\}$ such that
$$
\mathcal{H}^1\big(\pi_{\ell,d-1}(A)\big)>0,
$$
where $\pi_{\ell,d-1}$ denotes projection onto the $\ell$-th coordinate in $\mathbb{R}^{d-1}$. Since
$$
\pi_{\ell,d-1}\circ\widetilde{\mathsf{H}}^I
=
\varphi_{i_\ell}\vert_M,
$$
we conclude that
\[
\mathcal{H}^1\big(\varphi_{i_\ell}(E)\big)
\geq
\mathcal{H}^1\big(\varphi_{i_\ell}(F)\big)
=
\mathcal{H}^1\big(\pi_{\ell,d-1}(A)\big)
>0.\qedhere
\]
\end{proof}

Corollary~\ref{cor:dcurves-genexample} assumes that $E$ has positive length away from $\Sigma_{\mathsf{\Phi}}$. Theorem~\ref{thm:dcurvesrefined-genexample} treats the complementary case by imposing tangential immersion along an embedded hypersurface $M$ containing $\Sigma_{\mathsf{\Phi}}$. This is precisely the motivation for the critical-hypersurface and tangential-immersion conditions.

\subsection{Exceptional-set estimates for families of generalized curve projections}\label{subsec:parameterizedexceptionalsets-genexample}

Theorem~\ref{thm:dcurvesrefined-genexample} is a statement about a fixed collection $\mathsf{\Phi} := \{\varphi_1,\ldots,\varphi_d\}$ of $C^2$ mappings $\varphi_j : \mathbb{R}^d \rightarrow \mathbb{R}$. In many of the forthcoming applications we will consider, however, the mappings $\varphi$ are selected from a larger parameterized family indexed by a common topological space $\mathcal{A}$.

The pinned-distance problem is a key example: each mapping $d_p:\mathbb{R}^d\rightarrow\mathbb{R}$ is parameterized by a pin $p\in\mathcal{A}:=\mathbb{R}^d$. For a fixed $1$-rectifiable set, we already considered the set of \emph{bad witness pins}
$$
\mathcal{B}_E := \big\{p \in \mathbb{R}^d : \mathcal{H}^1 \big(\Delta_p(E)\big) = 0 \big\},
$$
and established the structural estimate
$$
\dim \Aff\big(\mathcal{B}_E\big) \leq d - 2.
$$
In particular, when $\mathcal{B}_E$ is non-empty, it is contained in the affine subspace $V:=\Aff(\mathcal{B}_E)$, whose dimension is strictly smaller than that of the ambient parameter space.

We now formulate the analogous bad-witness parameters for generalized curve projections indexed by a topological parameter space $\mathcal{A}$, and then state geometric results which make their exceptional nature precise.

We begin with the following notation and definition.

\begin{definition}[Parameterized Families and Admissible Tuples]
Let $\mathcal{A}$ be a topological parameter set and let $\Omega \subseteq \mathbb{R}^d$ be an open domain. Define
$$
\mathscr{C}_{\mathcal{A}} := \big\{\varphi_{\alpha} : \Omega \rightarrow \mathbb{R} \, \lvert \, \alpha \in \mathcal{A} \big\}
$$
to be a family of $C^2$ mappings defined on the common domain $\Omega$ and indexed by $\mathcal{A}$. Given a tuple
$
\boldsymbol{\alpha}:=(\alpha_1,\ldots,\alpha_d)\in\mathcal{A}^d,
$
we write
$$
\mathsf{\Phi}_{\boldsymbol{\alpha}}
:=
\{\varphi_{\alpha_1},\ldots,\varphi_{\alpha_d}\} \subseteq \mathscr{C}_{\mathcal{A}}
$$
and then adapt the notational conventions
$
\mathsf{H}_{\boldsymbol{\alpha}} := \mathsf{H}_{\mathsf{\Phi}_{\boldsymbol{\alpha}}}
$
and
$
\Sigma_{\boldsymbol{\alpha}} = \Sigma_{\mathsf{\Phi}_{\boldsymbol{\alpha}}}.
$

Throughout the remainder of this section, plain parameters
$
\alpha,\beta\in\mathcal{A}
$
always denote individual indices, whereas bold parameters
$$
\boldsymbol{\alpha}
=
(\alpha_1,\ldots,\alpha_d),
\qquad
\boldsymbol{\beta}
=
(\beta_1,\ldots,\beta_d)
\in\mathcal{A}^d
$$
always denote ordered $d$-tuples of parameters.

We say that $\boldsymbol{\alpha}\in\mathcal{A}^d$ is an \textbf{admissible tuple} if the associated family $\mathsf{\Phi}_{\boldsymbol{\alpha}}$ is admissible; that is, it satisfies both the critical-hypersurface condition and the tangential-immersion condition. We denote the set of admissible tuples by
$
\mathfrak{G}\subseteq\mathcal{A}^d.
$
\end{definition}

Our original good witness property (stated as Definition~\ref{def:goodwitnessproperty-intro}) concerns a fixed family defined on all of $\mathbb{R}^d$. The following corresponding terminology keeps careful track of the quantifiers for parameterized families and compact localizations.

\begin{definition}[Good Witness Parameters and Localized Good-Witness Tuples]
\label{def:goodwitnesstuples-genexample}
Let $\Omega\subseteq\mathbb{R}^d$ be a common open domain for the family $\mathscr{C}_{\mathcal{A}}$, and let $K\subseteq\Omega$ be compact.

\begin{enumerate}
\item If $E\subseteq\Omega$ is a Borel $1$-rectifiable set satisfying $\mathcal{H}^1(E)>0$, we say that an individual parameter $\alpha\in\mathcal{A}$ is a \textbf{good witness for $E$} if
$$
\mathcal{H}^1\big(\varphi_\alpha(E)\big)>0.
$$

\item We say that a tuple
$$
\boldsymbol{\alpha}
=
(\alpha_1,\ldots,\alpha_d)
\in\mathcal{A}^d
$$
is a \textbf{good-witness tuple relative to $K$} if, for every Borel $1$-rectifiable set $E\subseteq K$ satisfying $\mathcal{H}^1(E)>0$, there exists an index $j_E\in\{1,\ldots,d\}$ such that
$$
\mathcal{H}^1\big(\varphi_{\alpha_{j_E}}(E)\big)>0.
$$
\end{enumerate}
\end{definition}

\begin{remark}
The order of quantifiers in the second definition is critical:
$$
\forall E\subseteq K, \, E \text{ is }1\text{-rectifiable}, \,
\exists j_E\in\{1,\ldots,d\} \text{ such that } \mathcal{H}^1 \big(\varphi_{\alpha_{j_E}} (E) \big) > 0.
$$
Hence, the successful coordinate is permitted to depend on $E$. Requiring one fixed coordinate $\alpha_j$ to witness every positive-length rectifiable subset of $K$ would be substantially stronger and is generally impossible, since a scalar mapping may be constant along positive-length portions of its level sets.

Thus, a good-witness tuple is a property of the tuple relative to the compact localization $K$, whereas the successful individual good witness may depend on the particular rectifiable set $E$. By Theorem~\ref{thm:dcurvesrefined-genexample}, every globally admissible tuple is a good-witness tuple relative to every compact set $K\subseteq\mathbb{R}^d$.
\end{remark}

The following main exceptional-set estimate is the abstract counterpart of the pinned-distance conclusion; Proposition~\ref{prop:pinneddistanceadmissible-examples} shows directly that the pinned squared-distance family satisfies its hypotheses.

\begin{proposition}[Exceptional Sets for Parameterized Families]\label{prop:parameterizedexceptionalset-genexample}
Let $E\subseteq\mathbb{R}^d$ be a Borel $1$-rectifiable set satisfying $\mathcal{H}^1(E)>0$. Let $\mathscr{C}_{\mathcal{A}}$ be a family of $C^2$ mappings $\varphi_\alpha:\mathbb{R}^d\to\mathbb{R}$ indexed by a topological space $\mathcal{A}$. Define the set of bad parameters by
$$
\mathcal{B}_E
:=
\big\{
\alpha\in\mathcal{A}:
\mathcal{H}^1\big(\varphi_\alpha(E)\big)=0
\big\},
$$
and let $\mathfrak{G}\subseteq\mathcal{A}^d$ be the associated set of admissible tuples.

\begin{enumerate}
\item{If $\mathcal{B}_E^d$ denotes the $d$-fold Cartesian product of $\mathcal{B}_E$, then
\begin{equation}\label{eq:badparametersavoidadmissibletuples-genexample}
\mathcal{B}_E^d\cap\mathfrak{G}=\emptyset.
\end{equation}
}

\item{As a special case, if $\mathcal{A}\subseteq\mathbb{R}^m$ and every affinely independent tuple $\alpha_1,\ldots,\alpha_d\in\mathcal{A}$ is admissible, then there exists an affine subspace $V_E\subseteq\mathbb{R}^m$ such that
$
\mathcal{B}_E\subseteq V_E
$
and
$
\dim V_E\leq d-2.
$
}
\end{enumerate}
\end{proposition}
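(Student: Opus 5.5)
Part (1) follows directly from Theorem~\ref{thm:dcurvesrefined-genexample}. Suppose, for contradiction, that some tuple $\boldsymbol{\alpha}=(\alpha_1,\ldots,\alpha_d)$ lies in $\mathcal{B}_E^d\cap\mathfrak{G}$. Admissibility of $\boldsymbol{\alpha}$ means that $\mathsf{\Phi}_{\boldsymbol{\alpha}}=\{\varphi_{\alpha_1},\ldots,\varphi_{\alpha_d}\}$ is an admissible family. Theorem~\ref{thm:dcurvesrefined-genexample} then gives an index $j_E$ with $\mathcal{H}^1(\varphi_{\alpha_{j_E}}(E))>0$. This says $\alpha_{j_E}\notin\mathcal{B}_E$, contradicting $\boldsymbol{\alpha}\in\mathcal{B}_E^d$.

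There is one point of bookkeeping. If some coordinates of $\boldsymbol{\alpha}$ repeat, the "family" $\mathsf{\Phi}_{\boldsymbol{\alpha}}$ is really an ordered list, and the canonical encoding $\mathsf{H}_{\boldsymbol{\alpha}}$ is defined coordinatewise. Theorem~\ref{thm:dcurvesrefined-genexample} only ever uses the ordered coordinates, so repetitions are harmless.

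For part (2), I argue by contraposition on the affine dimension of $\mathcal{B}_E$. Set $V_E:=\Aff(\mathcal{B}_E)$, taking $V_E=\emptyset$ if $\mathcal{B}_E=\emptyset$, in which case there is nothing to prove. It suffices to show $\dim V_E\le d-2$. Suppose instead that $\dim V_E\ge d-1$. Since the affine hull of $\mathcal{B}_E$ is spanned affinely by points of $\mathcal{B}_E$, we can select $\dim V_E+1\ge d$ affinely independent points of $\mathcal{B}_E$. Any $d$ of them, say $\alpha_1,\ldots,\alpha_d$, are still affinely independent. They also lie in $\mathcal{A}$, because $\mathcal{B}_E\subseteq\mathcal{A}$ by definition. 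By hypothesis the tuple $(\alpha_1,\ldots,\alpha_d)$ is admissible, so it lies in $\mathcal{B}_E^d\cap\mathfrak{G}$, which contradicts \eqref{eq:badparametersavoidadmissibletuples-genexample}. Hence $\dim V_E\le d-2$, and trivially $\mathcal{B}_E\subseteq V_E$.

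There is no genuine obstacle here: the substance lies entirely in Theorem~\ref{thm:dcurvesrefined-genexample}. The only step needing care is the linear-algebra fact that an affine hull of dimension $k$ is generated by $k+1$ affinely independent points of the set itself. This follows by greedily choosing points not in the affine hull of those already chosen until the hull stabilizes. I will also note that this step needs $m\ge d-1$ in order to be nonvacuous. If $m<d-1$, then no $d$ affinely independent points exist, and the claim $\dim V_E\le m\le d-2$ holds automatically.
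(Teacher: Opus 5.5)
Your proof is correct and is essentially the paper's argument. Part (1) is an immediate contradiction with Theorem~\ref{thm:dcurvesrefined-genexample}, and part (2) extracts $d$ affinely independent points of $\mathcal{B}_E$ when $\dim\Aff(\mathcal{B}_E)\ge d-1$ and sets $V_E:=\Aff(\mathcal{B}_E)$; the only cosmetic difference is that, when $\mathcal{B}_E=\emptyset$, the paper takes $V_E$ to be a single point (dimension $0\le d-2$), which avoids relying on the convention that $\emptyset$ is an affine subspace.
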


Although this proposition is essentially a rephrasing of the preceding finite-witness result, we record it to motivate the openness results for good-witness tuples which immediately follow.

\begin{proof}
For a contradiction, suppose that
$$
\boldsymbol{\alpha}
=
(\alpha_1,\ldots,\alpha_d)
\in
\mathcal{B}_E^d\cap\mathfrak{G}.
$$
Then, as $\boldsymbol{\alpha}\in\mathcal{B}_E^d$, the set $\varphi_{\alpha_j}(E)$ is $\mathcal{H}^1$-null for each $j\in\{1,\ldots,d\}$. Since $\boldsymbol{\alpha}\in\mathfrak{G}$, Theorem~\ref{thm:dcurvesrefined-genexample} guarantees that at least one such image has positive $\mathcal{H}^1$ measure. This proves \eqref{eq:badparametersavoidadmissibletuples-genexample}.

Now suppose that $\mathcal{A}\subseteq\mathbb{R}^m$ and that every affinely independent $d$-tuple is admissible. For a contradiction, suppose that $\mathcal{B}_E$ is not contained in an affine subspace of dimension at most $d-2$, so that
$$
\dim\Aff(\mathcal{B}_E)\geq d-1.
$$
We may thus choose $d$ affinely independent points $\alpha_1^*,\ldots,\alpha_d^*\in\mathcal{B}_E$. The resulting tuple
$$
\boldsymbol{\alpha}^*
:=
(\alpha_1^*,\ldots,\alpha_d^*)
$$
is admissible by hypothesis, while also belonging to $\mathcal{B}_E^d$. This again contradicts \eqref{eq:badparametersavoidadmissibletuples-genexample}. Consequently,
$$
\dim\Aff(\mathcal{B}_E)\leq d-2,
$$
whenever $\mathcal{B}_E$ is non-empty, and we take $V_E:=\Aff(\mathcal{B}_E)$. If $\mathcal{B}_E$ is empty, any point of $\mathbb{R}^m$ may be used for $V_E$.\qedhere
\end{proof}

Using the previous result, an openness property for good-witness parameters follows.

\begin{theorem}[Local Abundance of Good Parameters]\label{thm:localgoodparameterneighbourhood-genexample}
Assume that $\mathcal{A}$ is a topological space and that $\mathfrak{G}\subseteq\mathcal{A}^d$ is open. Let $\boldsymbol{\alpha}^0=(\alpha_1^0,\ldots,\alpha_d^0)\in\mathfrak{G}$. Then there exist neighbourhoods $\alpha_j^0\in U_j\subseteq\mathcal{A}$ such that, for every Borel $1$-rectifiable set $E\subseteq\mathbb{R}^d$ satisfying $\mathcal{H}^1(E)>0$, there exists at least one index $j_E \in \{1,\ldots,d\}$ for which
$$
U_{j_E}\cap\mathcal{B}_E=\emptyset.
$$
In particular, every parameter in $U_{j_E}$ is good for $E$.
\end{theorem}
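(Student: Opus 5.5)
The plan is to combine openness of $\mathfrak{G}$ in the product topology with part (1) of Proposition~\ref{prop:parameterizedexceptionalset-genexample}, which gives $\mathcal{B}_E^d\cap\mathfrak{G}=\emptyset$. No measure theory beyond that proposition is needed; the argument is purely point-set topological and mirrors the last step of the proof of Proposition~\ref{prop:stableradialwitnesses-examples}.

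First, I choose the neighbourhoods independently of $E$. Since $\mathfrak{G}$ is open in $\mathcal{A}^d$ and $\boldsymbol{\alpha}^0\in\mathfrak{G}$, the definition of the product topology gives open sets $\alpha_j^0\in U_j\subseteq\mathcal{A}$ with
$$
U_1\times\cdots\times U_d\subseteq\mathfrak{G}.
$$
These depend only on $\boldsymbol{\alpha}^0$ and $\mathfrak{G}$.

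Next, I fix an arbitrary Borel $1$-rectifiable $E$ with $\mathcal{H}^1(E)>0$ and argue by contradiction. Suppose that $U_j\cap\mathcal{B}_E\neq\emptyset$ for every $j$. Then I pick $\beta_j\in U_j\cap\mathcal{B}_E$ for each $j$. The tuple $\boldsymbol{\beta}=(\beta_1,\ldots,\beta_d)$ lies in $U_1\times\cdots\times U_d\subseteq\mathfrak{G}$ and also in $\mathcal{B}_E^d$. This contradicts \eqref{eq:badparametersavoidadmissibletuples-genexample}, which rests on Theorem~\ref{thm:dcurvesrefined-genexample} applied to the admissible family $\mathsf{\Phi}_{\boldsymbol{\beta}}$. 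Hence some index $j_E$ satisfies $U_{j_E}\cap\mathcal{B}_E=\emptyset$, and by the definition of $\mathcal{B}_E$ every $\alpha\in U_{j_E}$ satisfies $\mathcal{H}^1(\varphi_\alpha(E))>0$.

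I do not expect a genuine obstacle here, since the openness of $\mathfrak{G}$ is assumed rather than proved. The only points needing care are two. The first is the order of quantifiers: the $U_j$ must be fixed before $E$ is chosen, which the product-topology step ensures. The second is confirming that Proposition~\ref{prop:parameterizedexceptionalset-genexample}(1) applies to every tuple in the product, including tuples with repeated parameters. Admissibility is exactly what that proposition requires, so this causes no difficulty. The substantive work of verifying that $\mathfrak{G}$ is open, for instance via fold non-degeneracy, is deferred to the later results, as the paper indicates.
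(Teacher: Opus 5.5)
Your proposal is correct and matches the paper's proof essentially verbatim. Both choose a product box $U_1\times\cdots\times U_d\subseteq\mathfrak{G}$ around $\boldsymbol{\alpha}^0$, and observe that if every $U_j$ met $\mathcal{B}_E$, the selected tuple would lie in $\mathfrak{G}\cap\mathcal{B}_E^d$, contradicting part (1) of Proposition~\ref{prop:parameterizedexceptionalset-genexample}. Your concern about repeated parameters is moot: a tuple with $\beta_i=\beta_j$ for some $i\neq j$ has $J_{\boldsymbol{\beta}}\equiv 0$, so it is never admissible, and the box therefore automatically has pairwise disjoint factors.
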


\begin{proof}
Since $\mathfrak{G}$ is open, we may choose neighbourhoods $U_1,\ldots,U_d$ such that
$$
U_1\times\cdots\times U_d\subseteq\mathfrak{G}.
$$
If every $U_j$ met $\mathcal{B}_E$, we could choose $\beta_j\in U_j\cap\mathcal{B}_E$ for every $j$. The tuple $(\beta_1,\ldots,\beta_d)$ would then belong both to $\mathfrak{G}$ and to $\mathcal{B}_E^d$, contradicting Proposition~\ref{prop:parameterizedexceptionalset-genexample}.\qedhere
\end{proof}

The strength of Theorem~\ref{thm:localgoodparameterneighbourhood-genexample} is its abstract generality: once the openness of $\mathfrak{G}$ is known, no differentiable structure on $\mathcal{A}$ is required, and the resulting neighbourhoods are uniform over all positive-length rectifiable sets $E\subseteq\mathbb{R}^d$, with only the successful index depending on $E$. Its limitation is that the openness of $\mathfrak{G}$ is assumed rather than established. Since membership in $\mathfrak{G}$ requires the global critical-hypersurface and tangential-immersion conditions, this assumption may be difficult to verify and need not persist under parameter perturbations. Thus the theorem gives a topological consequence of stability but supplies no mechanism for proving that stability. Fold non-degeneracy provides such a mechanism on compact spatial regions: it is a verifiable differential condition under which the critical hypersurface and its full tangential rank persist for nearby tuples on a prescribed compact set. This yields product neighbourhoods uniform over all rectifiable sets $E\subseteq K$ in Theorem~\ref{thm:stablegoodwitnessneighbourhoods-genexample}. It does not, by itself, prove that the globally admissible locus $\mathfrak{G}$ is open; the global conclusion is recovered only under the separate global fold-stability hypothesis of Corollary~\ref{cor:globalstablegoodwitnessneighbourhoods-genexample}.

\subsection{Fold non-degeneracy and local stability}\label{subsec:foldstability-genexample}

The critical-hypersurface and tangential-immersion conditions alone \emph{do not guarantee stability under parameter perturbation}, as the example later in this section shows: the determinant of the encoding map may vanish to higher order along the critical set. Fold non-degeneracy rules out this behaviour. Definition~\ref{def:foldnondegeneracy-genexample} gives the precise condition; this subsection develops its consequences for localized admissibility and parameter-space stability.

We first prove that the fold non-degeneracy condition implies the critical-hypersurface and tangential-immersion conditions.

\begin{lemma}[Fold Non-Degeneracy Implies Admissibility]\label{lem:foldimpliestangentialimmersion-genexample}
Suppose that $\mathsf{H}_{\boldsymbol{\alpha}}$ is fold non-degenerate on an open set $U\subseteq\mathbb{R}^d$. We then have the following conclusions.

\begin{enumerate}
\item The set $\Sigma_{\boldsymbol{\alpha}}\cap U$ is a $C^1$ embedded hypersurface.
\medskip
\item Every point $x \in \Sigma_{\boldsymbol{\alpha}} \cap U$ satisfies
$$
\rank\big(D_x\mathsf{H}_{\boldsymbol{\alpha}}(x)\vert_{T_x\Sigma_{\boldsymbol{\alpha}}}\big)=d-1.
$$
\end{enumerate}
In particular, the family $\mathsf{\Phi}_{\boldsymbol{\alpha}} \vert_U$ satisfies the critical-hypersurface and tangential-immersion conditions on $U$.
\end{lemma}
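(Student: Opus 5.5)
The approach is to realize $\Sigma_{\boldsymbol{\alpha}}\cap U$ as the zero set of the Jacobian determinant $J_{\boldsymbol{\alpha}}$ and to show that $0$ is a regular value of $J_{\boldsymbol{\alpha}}$ there. The kernel-transversality condition \eqref{eq:foldkerneltransversality-genexample} supplies that regularity directly, and the same condition also forces the lost kernel direction to be transverse to the resulting hypersurface.

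First, since each $\varphi_{\alpha_j}$ is $C^2$, the entries of $D_x\mathsf{H}_{\boldsymbol{\alpha}}$ are $C^1$. Hence $J_{\boldsymbol{\alpha}}=\det D_x\mathsf{H}_{\boldsymbol{\alpha}}$ is a $C^1$ function on $U$. Because a square matrix has rank at most $d-1$ exactly when its determinant vanishes, we have
$$
\Sigma_{\boldsymbol{\alpha}}\cap U=\{x\in U: J_{\boldsymbol{\alpha}}(x)=0\}.
$$
Now fix $x\in\Sigma_{\boldsymbol{\alpha}}\cap U$. By rank-stability \eqref{eq:foldrankstability-genexample}, the kernel of $D_x\mathsf{H}_{\boldsymbol{\alpha}}(x)$ is one-dimensional, spanned by a unit vector $n_x$. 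By \eqref{eq:foldkerneltransversality-genexample}, $D_xJ_{\boldsymbol{\alpha}}(x)[n_x]\neq0$, so in particular $D_xJ_{\boldsymbol{\alpha}}(x)\neq0$. Thus $0$ is a regular value of $J_{\boldsymbol{\alpha}}\vert_U$. The regular value theorem (the implicit function theorem, in the form recorded in the Appendix) then shows that $\Sigma_{\boldsymbol{\alpha}}\cap U$ is a $C^1$ embedded hypersurface of $U$, with
$$
T_x\Sigma_{\boldsymbol{\alpha}}=\ker D_xJ_{\boldsymbol{\alpha}}(x).
$$
This proves part (1).

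For part (2), fix $x\in\Sigma_{\boldsymbol{\alpha}}\cap U$ and suppose $v\in T_x\Sigma_{\boldsymbol{\alpha}}$ satisfies $D_x\mathsf{H}_{\boldsymbol{\alpha}}(x)[v]=0$. Then $v=c\,n_x$ for some scalar $c$. Since $v$ is tangent, $D_xJ_{\boldsymbol{\alpha}}(x)[v]=c\,D_xJ_{\boldsymbol{\alpha}}(x)[n_x]=0$, and because $D_xJ_{\boldsymbol{\alpha}}(x)[n_x]\neq0$ this forces $c=0$. Hence the restriction of $D_x\mathsf{H}_{\boldsymbol{\alpha}}(x)$ to the $(d-1)$-dimensional space $T_x\Sigma_{\boldsymbol{\alpha}}$ is injective, and so has rank $d-1$. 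Equivalently, $\ker D_x\mathsf{H}_{\boldsymbol{\alpha}}(x)$ is transverse to $T_x\Sigma_{\boldsymbol{\alpha}}$, and $\mathbb{R}^d=T_x\Sigma_{\boldsymbol{\alpha}}\oplus\vspan\{n_x\}$.

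For the final claim, take $M:=\Sigma_{\boldsymbol{\alpha}}\cap U$, working on the domain $U$ as permitted by the remark on common open domains. The critical-hypersurface condition then holds with equality $\Sigma\cap U=M$, and part (2) is exactly the tangential-immersion condition. No step is a serious obstacle. The only points needing care are that $J_{\boldsymbol{\alpha}}$ is merely $C^1$, which is exactly why the hypersurface is only $C^1$, and that the tangent space of a regular level set is identified with $\ker D_xJ_{\boldsymbol{\alpha}}(x)$.
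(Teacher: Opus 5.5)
Your proposal is correct and follows essentially the same route as the paper. You realize $\Sigma_{\boldsymbol{\alpha}}\cap U$ as the zero set of $J_{\boldsymbol{\alpha}}$, use kernel transversality to get $D_xJ_{\boldsymbol{\alpha}}(x)\neq 0$ (hence a $C^1$ regular level set with $T_x\Sigma_{\boldsymbol{\alpha}}=\ker D_xJ_{\boldsymbol{\alpha}}(x)$), and then deduce injectivity of $D_x\mathsf{H}_{\boldsymbol{\alpha}}(x)$ on that tangent space because the one-dimensional kernel is not tangent; your explicit remarks that $J_{\boldsymbol{\alpha}}$ is only $C^1$ and that $M:=\Sigma_{\boldsymbol{\alpha}}\cap U$ itself serves as the critical hypersurface are small clarifications the paper leaves implicit.
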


\begin{proof}
Fix $x\in\Sigma_{\boldsymbol{\alpha}}\cap U$ and choose $v\in\ker D_x\mathsf{H}_{\boldsymbol{\alpha}}(x)\setminus \{0\}$. Since $D_xJ_{\boldsymbol{\alpha}}(x)[v]\neq0$, the total derivative $D_xJ_{\boldsymbol{\alpha}}(x)$ cannot itself be zero. The Regular Level Set Theorem, in the form recalled in Subsection \ref{subsec:hypersurfacetransversality-diffgeo}, therefore shows that
$$
\Sigma_{\boldsymbol{\alpha}}\cap U
=
\big\{y \in U : \rank D_x\mathsf{H}_{\boldsymbol{\alpha}}(y) \leq d-1 \big\}
=
\big\{y\in U:J_{\boldsymbol{\alpha}}(y)=0\big\}
$$
is locally a $C^1$ hypersurface. Moreover, we know that
$$
T_x\Sigma_{\boldsymbol{\alpha}}=\ker D_xJ_{\boldsymbol{\alpha}}(x),
$$
which is identity \eqref{eq:regularleveltangent-diffgeo} from the appendix. Condition \eqref{eq:foldkerneltransversality-genexample} now gives $v\notin T_x\Sigma_{\boldsymbol{\alpha}}$. By \eqref{eq:foldrankstability-genexample}, the kernel of $D_x\mathsf{H}_{\boldsymbol{\alpha}}(x)$ is one-dimensional, so
$$
\ker D_x\mathsf{H}_{\boldsymbol{\alpha}}(x)
\cap
T_x\Sigma_{\boldsymbol{\alpha}}
=
\{0\}.
$$
Thus $D_x\mathsf{H}_{\boldsymbol{\alpha}}(x)$ is injective on the $(d-1)$-dimensional space $T_x\Sigma_{\boldsymbol{\alpha}}$, and its restriction has rank $d-1$, which follows from the kernel-transversality criterion from Proposition~\ref{prop:kerneltransversality-diffgeo}.\qedhere
\end{proof}

The preceding lemma shows that global fold non-degeneracy implies membership in $\mathfrak{G}$, while fold non-degeneracy on an open set gives the corresponding localized admissibility there. We next show that this localized condition is stable in parameter space after compact localization in the ambient spatial variable.

\begin{theorem}[Local Stability of Fold Non-Degeneracy]\label{thm:localfoldstability-genexample}
Let $\mathcal{A}\subseteq\mathbb{R}^m$ be open, and suppose that the mapping
$$
(\alpha,x)\longmapsto\varphi_\alpha(x), \qquad \forall (\alpha,x) \in \mathcal{A} \times \mathbb{R}^d
$$
is jointly $C^2$. Let $K\subset U\subseteq \mathbb{R}^d$, where $K$ is compact and $U$ is open. Fix $\boldsymbol{\alpha}^0 = (\alpha_1^0,\ldots,\alpha_d^0)\in\mathcal{A}^d$.

If $\mathsf{H}_{\boldsymbol{\alpha}^0}$ is fold non-degenerate at every critical point in $\overline{U}$, then the following hold simultaneously.

\begin{enumerate}
\item{There exists an open neighbourhood $\boldsymbol{\alpha}^0\in\mathcal{V}\subseteq\mathcal{A}^d$ such that, for every $\boldsymbol{\beta}\in\mathcal{V}$, the encoding map $\mathsf{H}_{\boldsymbol{\beta}}$ is fold non-degenerate at every critical point in $K$.}
\medskip
\item{Consequently, there exists an open set $W$ satisfying
$
K\subset W\Subset U
$
such that, for every $\boldsymbol{\beta} \in \mathcal{V}$, the associated family $\mathsf{\Phi}_{\boldsymbol{\beta}} \vert_W$ satisfies the critical-hypersurface and tangential-immersion conditions on $W$.}
\end{enumerate}
\end{theorem}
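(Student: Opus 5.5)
The plan is to recast fold non-degeneracy at critical points as the non-vanishing of a single function that is jointly continuous in $(\boldsymbol{\beta},x)$. Once that is done, part (1) follows from a compactness (tube-lemma) argument, and part (2) follows from Lemma~\ref{lem:foldimpliestangentialimmersion-genexample}. The main obstacle is the reformulation itself. Condition \eqref{eq:foldkerneltransversality-genexample} quantifies over a kernel whose dimension could a priori jump as $\boldsymbol{\beta}$ varies, so we need a formulation that does not refer to the kernel explicitly.

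To get one, we use the adjugate. For a $d\times d$ matrix $A$:
\begin{itemize}
\item if $\rank A\leq d-2$, then $\operatorname{adj}(A)=0$;
\item if $\rank A=d-1$, then $\operatorname{adj}(A)=k\,w^{\mathsf T}$ with $\ker A=\vspan\{k\}$, $\ker A^{\mathsf T}=\vspan\{w\}$, and $k,w\neq 0$.
\end{itemize}
Hence, at a point $x\in\Sigma_{\boldsymbol{\beta}}$, the row vector
$$
D_xJ_{\boldsymbol{\beta}}(x)\,\operatorname{adj}\big(D_x\mathsf{H}_{\boldsymbol{\beta}}(x)\big)
$$
is nonzero if and only if $\rank D_x\mathsf{H}_{\boldsymbol{\beta}}(x)=d-1$ and $D_xJ_{\boldsymbol{\beta}}(x)[k]\neq0$. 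This is exactly conditions \eqref{eq:foldrankstability-genexample} and \eqref{eq:foldkerneltransversality-genexample} together, since in rank $d-1$ every nonzero kernel vector is a multiple of $k$. We therefore define
$$
G(\boldsymbol{\beta},x):=\big|J_{\boldsymbol{\beta}}(x)\big|+\Big|D_xJ_{\boldsymbol{\beta}}(x)\,\operatorname{adj}\big(D_x\mathsf{H}_{\boldsymbol{\beta}}(x)\big)\Big|.
$$
Then $\mathsf{H}_{\boldsymbol{\beta}}$ is fold non-degenerate at every critical point of a set $S$ if and only if $G(\boldsymbol{\beta},\cdot)>0$ on $S$.

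Next we check continuity. Since $(\alpha,x)\mapsto\varphi_\alpha(x)$ is jointly $C^2$, the entries of $D_x\mathsf{H}_{\boldsymbol{\beta}}(x)$ are jointly $C^1$ in $(\boldsymbol{\beta},x)$. Therefore $J_{\boldsymbol{\beta}}(x)$, being a polynomial in those entries, is jointly $C^1$, and $D_xJ_{\boldsymbol{\beta}}(x)$ is jointly continuous. The adjugate is polynomial in the entries, so $G$ is continuous on $\mathcal{A}^d\times\mathbb{R}^d$.

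We now choose the intermediate open set $W$. Since $K$ is compact and contained in the open set $U$, pick $\delta>0$ so small that the closed $\delta$-neighbourhood $\overline{W}$ of $K$ is a compact subset of $U$, and let $W$ be the open $\delta$-neighbourhood. Then $K\subset W\Subset U$.

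By hypothesis and the characterization above, $G(\boldsymbol{\alpha}^0,\cdot)>0$ on $\overline{W}\subseteq\overline{U}$. By continuity, every point $(\boldsymbol{\alpha}^0,x)$ with $x\in\overline{W}$ has a product neighbourhood on which $G>0$. Compactness of $\overline{W}$ and the tube lemma then give an open $\mathcal{V}\ni\boldsymbol{\alpha}^0$ in $\mathcal{A}^d$ with $G>0$ on $\mathcal{V}\times\overline{W}$. In particular $G>0$ on $\mathcal{V}\times K$, which proves (1). In fact we obtain the stronger statement with $K$ replaced by $\overline{W}$.

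For (2), fix $\boldsymbol{\beta}\in\mathcal{V}$. Then $\mathsf{H}_{\boldsymbol{\beta}}$ is fold non-degenerate on the open set $W$. Lemma~\ref{lem:foldimpliestangentialimmersion-genexample} shows that $\Sigma_{\boldsymbol{\beta}}\cap W$ is a $C^1$ embedded hypersurface along which $D_x\mathsf{H}_{\boldsymbol{\beta}}$ has full tangential rank $d-1$. Taking $M:=\Sigma_{\boldsymbol{\beta}}\cap W$, which is allowed to be empty, gives the critical-hypersurface and tangential-immersion conditions for $\mathsf{\Phi}_{\boldsymbol{\beta}}\vert_W$.
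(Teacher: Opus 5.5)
Your proof is correct, but it takes a different route from the paper's.

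\textbf{The paper's route.} The paper works locally around each point of the compact critical set $Z=\Sigma_{\boldsymbol{\alpha}^0}\cap\overline{W}$. After permuting rows and columns, it isolates an invertible $(d-1)\times(d-1)$ block $B$ and builds the explicit kernel field $v_{\boldsymbol{\beta},y}=(-B^{-1}c,1)$. It then uses the Schur-complement identity $J_{\boldsymbol{\beta}}=\sigma\det B\,(a-r^{\mathsf T}B^{-1}c)$ to show that nearby critical points have rank exactly $d-1$, with kernel spanned by $v_{\boldsymbol{\beta},y}$. It propagates $D_xJ_{\boldsymbol{\beta}}(y)[v_{\boldsymbol{\beta},y}]\neq0$ by continuity and takes a finite subcover of $Z$. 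Finally, it separately excludes new critical points on the rest of $\overline{W}$ through a uniform lower bound on $|J_{\boldsymbol{\alpha}^0}|$, with an extra case for $Z=\emptyset$.

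\textbf{Your route.} Your discriminant $G$ packs all of this into one continuous, coordinate-free function. The $|J|$ term covers the non-critical region. The adjugate encodes rank stability, since it vanishes in rank $\leq d-2$, and kernel transversality, since it equals $kw^{\mathsf T}$ in rank $d-1$. So a single tube-lemma argument over $\overline{W}$ proves (1) with no case distinctions and no need to track the kernel dimension separately.

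\textbf{How they relate.} By Cramer's rule, in the paper's permuted block coordinates the last column of $\operatorname{adj}\big(D_x\mathsf{H}_{\boldsymbol{\beta}}(y)\big)$ is exactly $\det B_{\boldsymbol{\beta},y}\cdot v_{\boldsymbol{\beta},y}$. So the paper's kernel field is a normalized column of your adjugate.

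\textbf{What each buys.} The paper's route gives slightly more explicit local information: a continuous kernel line field, and a local factorization of $J_{\boldsymbol{\beta}}$ as a nonvanishing factor times a scalar Schur complement. Yours is shorter and avoids choosing minors. Part (2) is the same in both: the Regular Level Set Theorem plus the kernel-transversality criterion, which is exactly the content of Lemma~\ref{lem:foldimpliestangentialimmersion-genexample}.
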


\begin{proof}
We again use the notation
$
J_{\boldsymbol{\beta}}(y)
:=
\det D_x\mathsf{H}_{\boldsymbol{\beta}}(y).
$
Since $K$ is compact and $U$ is open, we may first choose an open set $W\subseteq\mathbb{R}^d$ such that
$
K\subset W\Subset U.
$
We thus set
$
L:=\overline{W}
$
and
$$
Z
:=
\Sigma_{\boldsymbol{\alpha}^0}\cap L
=
\big\{
y\in L:
J_{\boldsymbol{\alpha}^0}(y)=0
\big\}.
$$
Since $L$ is compact and $J_{\boldsymbol{\alpha}^0}$ is continuous, the set $Z$ is compact.

If $Z$ is empty, then $|J_{\boldsymbol{\alpha}^0}|$ is bounded away from zero on $L$. The joint continuity of $(\boldsymbol{\beta},y)\mapsto J_{\boldsymbol{\beta}}(y)$ and the compactness of $L$ then give an open neighbourhood $\boldsymbol{\alpha}^0\in\mathcal{V}\subseteq\mathcal{A}^d$ for which $J_{\boldsymbol{\beta}}$ has no zeros on $L$ whenever $\boldsymbol{\beta}\in\mathcal{V}$. In this case $\mathsf{H}_{\boldsymbol{\beta}}$ has no critical points in $W$, so the fold non-degeneracy, critical-hypersurface, and tangential-immersion conclusions on $W$ are vacuous. We may therefore assume that $Z$ is non-empty.

Fix $x\in Z$. Using condition \eqref{eq:foldrankstability-genexample} of the fold non-degeneracy hypothesis, we know that
$$
\rank D_x\mathsf{H}_{\boldsymbol{\alpha}^0}(x)=d-1.
$$
After fixed permutations of the rows and columns, we may therefore assume that
$$
D_x\mathsf{H}_{\boldsymbol{\alpha}^0}(x)
=
\begin{pmatrix}
B_{\boldsymbol{\alpha}^0,x}
&
c_{\boldsymbol{\alpha}^0,x}
\\
r_{\boldsymbol{\alpha}^0,x}^{\mathsf{T}}
&
a_{\boldsymbol{\alpha}^0,x}
\end{pmatrix},
$$
where $B_{\boldsymbol{\alpha}^0,x}$ is an invertible $(d-1)\times(d-1)$ matrix. These permutations change the determinant by a fixed sign $\sigma_x\in\{-1,1\}$ but do not affect its zero set or the fold conditions. We then observe that the joint $C^2$ dependence of the mapping
$$
(\alpha,y)\longmapsto\varphi_\alpha(y),
\qquad
(\alpha,y)\in\mathcal{A}\times\mathbb{R}^d,
$$
implies that the mapping
$$
(\boldsymbol{\beta},y)
\longmapsto
D_x\mathsf{H}_{\boldsymbol{\beta}}(y)
$$
is continuous. Consequently, there are open neighbourhoods
$$
\boldsymbol{\alpha}^0\in\mathcal{V}_x\subseteq\mathcal{A}^d
\qquad\text{and}\qquad
x\in G_x\subseteq U
$$
such that
\begin{equation}\label{eq:localderivativestructurefold-genexample}
D_x\mathsf{H}_{\boldsymbol{\beta}}(y)
=
\begin{pmatrix}
B_{\boldsymbol{\beta},y}
&
c_{\boldsymbol{\beta},y}
\\
r_{\boldsymbol{\beta},y}^{\mathsf{T}}
&
a_{\boldsymbol{\beta},y}
\end{pmatrix},
\qquad
(\boldsymbol{\beta},y)
\in
\mathcal{V}_x\times G_x,
\end{equation}
and, by continuity, we may assume that the minor $B_{\boldsymbol{\beta},y}$ remains invertible throughout this product neighbourhood. On $\mathcal{V}_x\times G_x$, define the vector-valued function
$$
(\boldsymbol{\beta},y)
\longmapsto
v_{\boldsymbol{\beta},y}
\in\mathbb{R}^d
$$
by setting
$$
v_{\boldsymbol{\beta},y}
:=
\begin{pmatrix}
-B_{\boldsymbol{\beta},y}^{-1}c_{\boldsymbol{\beta},y}
\\
1
\end{pmatrix}.
$$
Recall that $B_{\boldsymbol{\beta},y}$ is the $(d-1)\times(d-1)$ invertible square matrix from \eqref{eq:localderivativestructurefold-genexample}, whereas $c_{\boldsymbol{\beta},y}$ is the $(d-1)\times1$ column vector from \eqref{eq:localderivativestructurefold-genexample}.

By the preceding discussion, $v_{\boldsymbol{\beta},y}$ is jointly continuous in $(\boldsymbol{\beta},y)\in\mathcal{V}_x\times G_x$. Moreover, it is nonzero because its final coordinate is equal to $1$. By construction, we know that
\begin{equation}\label{eq:totalderivativeflat-genexample}
D_x\mathsf{H}_{\boldsymbol{\beta}}(y)
v_{\boldsymbol{\beta},y}
=
\begin{pmatrix}
0
\\
a_{\boldsymbol{\beta},y}
-
r_{\boldsymbol{\beta},y}^{\mathsf{T}}
B_{\boldsymbol{\beta},y}^{-1}
c_{\boldsymbol{\beta},y}
\end{pmatrix},
\qquad
\forall (\boldsymbol{\beta},y)
\in
\mathcal{V}_x\times G_x,
\end{equation}
whereas the block determinant formula gives
\begin{equation}\label{eq:blockdeterminantflat-genexample}
J_{\boldsymbol{\beta}}(y)
=
\sigma_x\big(\det B_{\boldsymbol{\beta},y}\big)
\big(
a_{\boldsymbol{\beta},y}
-
r_{\boldsymbol{\beta},y}^{\mathsf{T}}
B_{\boldsymbol{\beta},y}^{-1}
c_{\boldsymbol{\beta},y}
\big).
\end{equation}
Comparing \eqref{eq:totalderivativeflat-genexample} and \eqref{eq:blockdeterminantflat-genexample} shows that whenever $(\boldsymbol{\beta},y)\in\mathcal{V}_x\times G_x$ and $J_{\boldsymbol{\beta}}(y)=0$, we have
$$
D_x\mathsf{H}_{\boldsymbol{\beta}}(y)
v_{\boldsymbol{\beta},y}=0_d.
$$
This follows because both quantities vanish if and only if the following scalar-valued identity is satisfied
$$
a_{\boldsymbol{\beta},y}
-
r_{\boldsymbol{\beta},y}^{\mathsf{T}}
B_{\boldsymbol{\beta},y}^{-1}
c_{\boldsymbol{\beta},y} = 0.
$$
Again, we have used the invertibility of the minor $B_{\boldsymbol{\beta},y}$ to establish this equivalence.

Since $B_{\boldsymbol{\beta},y}$ is invertible, the total derivative matrix $D_x\mathsf{H}_{\boldsymbol{\beta}}(y)$ has rank $ \geq d-1$. In the complementary direction, since its determinant vanishes at the critical point $y$, it also has rank at most $d-1$. Therefore,
$$
\rank D_x\mathsf{H}_{\boldsymbol{\beta}}(y)=d-1
\qquad\text{and hence}\qquad
\dim \ker D_x\mathsf{H}_{\boldsymbol{\beta}}(y) = 1.
$$
The preceding calculations thus demonstrate that
$$
\ker D_x\mathsf{H}_{\boldsymbol{\beta}}(y)
=
\operatorname{span}\{v_{\boldsymbol{\beta},y}\}.
$$
In particular, for every fixed $\boldsymbol{\beta}\in\mathcal{V}_x$, we have verified the first requirement of the fold non-degeneracy condition (which is \eqref{eq:foldrankstability-genexample} in Definition \ref{def:foldnondegeneracy-genexample}), and this verification holds for every critical point $y\in\Sigma_{\boldsymbol{\beta}}\cap G_x$.

Now, at the original base point $(\boldsymbol{\alpha}^0,x)\in\mathcal{V}_x\times G_x$, the assumption of fold non-degeneracy guarantees that
$$
D_xJ_{\boldsymbol{\alpha}^0}(x)
[v_{\boldsymbol{\alpha}^0,x}]
\neq0.
$$
Because the mapping $(\alpha,y)\mapsto\varphi_\alpha(y)$ is jointly $C^2$, the induced mapping
$$
(\boldsymbol{\beta},y)
\longmapsto
D_xJ_{\boldsymbol{\beta}}(y)
$$
is continuous; since the mapping $(\boldsymbol{\beta},y)\mapsto v_{\boldsymbol{\beta},y}$ is also continuous, the function
$$
(\boldsymbol{\beta},y)
\longmapsto
D_xJ_{\boldsymbol{\beta}}(y)
[v_{\boldsymbol{\beta},y}]
$$
is itself continuous. After perhaps shrinking the open component sets $\mathcal{V}_x$ and $G_x$, we can thus arrange that
\begin{equation}\label{eq:criticalnonvanishingfold-genexample}
D_xJ_{\boldsymbol{\beta}}(y)
[v_{\boldsymbol{\beta},y}]
\neq0
\end{equation}
throughout $\mathcal{V}_x\times G_x$.

Fix $\boldsymbol{\beta} \in \mathcal{V}_x$ and let $y\in\Sigma_{\boldsymbol{\beta}}\cap G_x$ be a critical point for the associated canonical encoding map $\mathsf{H}_{\boldsymbol{\beta}}$. Since
$$
\ker D_x\mathsf{H}_{\boldsymbol{\beta}}(y)
=
\operatorname{span}\{v_{\boldsymbol{\beta},y}\},
$$
every $w\in\ker D_x\mathsf{H}_{\boldsymbol{\beta}}(y)\setminus \{0\}$ has the form
$$
w=\lambda v_{\boldsymbol{\beta},y}
$$
for some $\lambda\neq0$. It follows from \eqref{eq:criticalnonvanishingfold-genexample} that
$$
D_xJ_{\boldsymbol{\beta}}(y)[w]
=
\lambda
D_xJ_{\boldsymbol{\beta}}(y)
[v_{\boldsymbol{\beta},y}]
\neq0.
$$
This verifies the second requirement of the fold non-degeneracy condition (which is \eqref{eq:foldkerneltransversality-genexample} in Definition \ref{def:foldnondegeneracy-genexample}). This verification again holds at every critical point $y\in\Sigma_{\boldsymbol{\beta}}\cap G_x$.

Now, the family of component neighbourhoods $G_x \subseteq U$, indexed by the base points $x\in Z$, covers the compact set $Z$. Choosing a finite subcover
$$
Z\subseteq G_{x_1}\cup\cdots\cup G_{x_N}
$$
and defining
$$
G:=G_{x_1}\cup\cdots\cup G_{x_N},
\qquad
\mathcal{V}_0
:=
\mathcal{V}_{x_1}\cap\cdots\cap\mathcal{V}_{x_N},
$$
we obtain that, for every $\boldsymbol{\beta}\in\mathcal{V}_0$, the canonical encoding map $\mathsf{H}_{\boldsymbol{\beta}}$ is fold non-degenerate at every point of $\Sigma_{\boldsymbol{\beta}}\cap G$.

It remains to rule out the appearance of new critical points in $L\setminus G$. Since every critical point of $\mathsf{H}_{\boldsymbol{\alpha}^0}$ in $L$ belongs to $Z\subseteq G$, the function $J_{\boldsymbol{\alpha}^0}$ has no zeros on the compact set $L\setminus G$. If $L\setminus G$ is non-empty, there exists a constant $\delta>0$ such that
$$
\big|J_{\boldsymbol{\alpha}^0}(y)\big|
\geq\delta,
\qquad
y\in L\setminus G.
$$
The continuity of $(\boldsymbol{\beta},y)\mapsto J_{\boldsymbol{\beta}}(y)$ and the compactness of $L$ then allow us to shrink to an open neighbourhood
$$
\boldsymbol{\alpha}^0
\in
\mathcal{V}
\subseteq
\mathcal{V}_0
$$
upon which
$$
\sup_{y\in L}
\big|
J_{\boldsymbol{\beta}}(y)
-
J_{\boldsymbol{\alpha}^0}(y)
\big|
<
\frac{\delta}{2},
\qquad
\boldsymbol{\beta}\in\mathcal{V}.
$$
The triangle inequality gives
$$
\big|J_{\boldsymbol{\beta}}(y)\big|
\geq
\frac{\delta}{2},
\qquad
y\in L\setminus G.
$$
Hence $\mathsf{H}_{\boldsymbol{\beta}}$ has no critical points in $L\setminus G$. If $L\setminus G$ is empty, we simply take $\mathcal{V}:=\mathcal{V}_0$.

We have thus shown that every critical point of $\mathsf{H}_{\boldsymbol{\beta}}$ in $L$ lies in $G$, where the corank-one condition and the kernel-transversality condition both hold. Hence, $\mathsf{H}_{\boldsymbol{\beta}}$ is fold non-degenerate at every critical point in $L$ for every $\boldsymbol{\beta}\in\mathcal{V}$. Since
$$
K\subset W\subseteq L,
$$
this proves the first conclusion and, in fact, proves fold non-degeneracy at every critical point in $W$. This establishes part (1) of Theorem~\ref{thm:localfoldstability-genexample}.

We then fix $\boldsymbol{\beta}\in\mathcal{V}$ and let $y\in W\cap\Sigma_{\boldsymbol{\beta}}$. We have already demonstrated that \eqref{eq:criticalnonvanishingfold-genexample} implies
$$
D_xJ_{\boldsymbol{\beta}}(y)\neq0.
$$
The Regular Level Set Theorem, as recalled in Subsection \ref{subsec:hypersurfacetransversality-diffgeo}, therefore shows that
$$
\Sigma_{\boldsymbol{\beta}}\cap W
=
\big\{
z\in W:
J_{\boldsymbol{\beta}}(z)=0
\big\}
$$
is a $C^1$ hypersurface, whenever it is non-empty. Its tangent space at each critical point $y\in W\cap\Sigma_{\boldsymbol{\beta}}$ satisfies the identity
$$
T_y\Sigma_{\boldsymbol{\beta}}
=
\ker D_xJ_{\boldsymbol{\beta}}(y).
$$
This is identity \eqref{eq:regularleveltangent-diffgeo} with $J=J_{\boldsymbol{\beta}}$. Moreover, the fold non-degeneracy condition implies
$$
\ker D_x\mathsf{H}_{\boldsymbol{\beta}}(y)
\cap
T_y\Sigma_{\boldsymbol{\beta}}
=
\{0\},
$$
and hence
$$
\rank\big(
D_x\mathsf{H}_{\boldsymbol{\beta}}(y)
\vert_{T_y\Sigma_{\boldsymbol{\beta}}}
\big)
=
d-1.
$$
The last implication is exactly Proposition~\ref{prop:kerneltransversality-diffgeo}. This is precisely the critical-hypersurface and tangential-immersion conclusion on the open set $W$. This establishes part (2) of Theorem~\ref{thm:localfoldstability-genexample}.\qedhere
\end{proof}

The preceding pointwise good-witness results for $\boldsymbol{\beta} \in \mathcal{A}^d$ yield the following local stability in parameter space.

\begin{theorem}[Stable Neighbourhoods of Good Witnesses]\label{thm:stablegoodwitnessneighbourhoods-genexample}
Assume the hypotheses of Theorem~\ref{thm:localfoldstability-genexample}. Then there exist open neighbourhoods
$$
\alpha_j^0\in U_j\subseteq\mathcal{A},
\qquad j=1,\ldots,d,
$$
such that the following holds. For every Borel $1$-rectifiable set $E\subseteq K$ satisfying $\mathcal{H}^1(E)>0$, there exists an index $j_E\in\{1,\ldots,d\}$ for which
\begin{equation}\label{eq:stablegoodwitnessneighbourhood-genexample}
\mathcal{H}^1\big(\varphi_\alpha(E)\big)>0
\qquad
\text{for every }\alpha\in U_{j_E}.
\end{equation}
The neighbourhoods $U_1,\ldots,U_d$ depend only on the tuple $\boldsymbol{\alpha}^0$ and the compact localization $K$, while the successful index $j_E$ may depend on $E$.
\end{theorem}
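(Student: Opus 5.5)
The plan is to combine Theorem~\ref{thm:localfoldstability-genexample} with a localized version of Theorem~\ref{thm:dcurvesrefined-genexample}, and then to run the same product-neighbourhood argument used in Theorem~\ref{thm:localgoodparameterneighbourhood-genexample}.

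\textbf{Step 1 (stable admissibility).} First apply Theorem~\ref{thm:localfoldstability-genexample} to $\boldsymbol{\alpha}^0$, $K$ and $U$. This produces an open neighbourhood $\mathcal{V}\ni\boldsymbol{\alpha}^0$ in $\mathcal{A}^d$ and an open set $W$ with $K\subset W\Subset U$. For every $\boldsymbol{\beta}\in\mathcal{V}$, the restricted family $\mathsf{\Phi}_{\boldsymbol{\beta}}\vert_W$ satisfies the critical-hypersurface and tangential-immersion conditions on $W$. The relevant hypersurface is $M_{\boldsymbol{\beta}}:=\Sigma_{\boldsymbol{\beta}}\cap W$, which is embedded in $W$ and may be empty. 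Since the product topology has a basis of boxes, we then shrink $\mathcal{V}$ to a box $U_1\times\cdots\times U_d\subseteq\mathcal{V}$ with each $U_j\ni\alpha_j^0$ open. These neighbourhoods depend only on $\boldsymbol{\alpha}^0$ and $K$, together with the fixed choice of $U$.

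\textbf{Step 2 (localized good witnesses).} Next we show that each $\boldsymbol{\beta}\in U_1\times\cdots\times U_d$ is a good-witness tuple relative to $K$. Fix a Borel $1$-rectifiable set $E\subseteq K$ with $\mathcal{H}^1(E)>0$. By the Remark in Section~\ref{subsec:regularcurveprojections-genexample}, every argument in the proof of Theorem~\ref{thm:dcurvesrefined-genexample} is local, so we rerun it with $\Omega=W$.

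If $\mathcal{H}^1(E\setminus\Sigma_{\boldsymbol{\beta}})>0$, then Corollary~\ref{cor:dcurves-genexample}, applied on $W$, gives a good witness. Its countable exhaustion is taken by compact subsets of $W$ at positive distance from $\Sigma_{\boldsymbol{\beta}}\cap W$, and this works because $E\subseteq K\subset W$.

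Otherwise $E\cap\Sigma_{\boldsymbol{\beta}}\subseteq M_{\boldsymbol{\beta}}$ has positive length. We then cover $M_{\boldsymbol{\beta}}$ by the relatively open sets $U_I$ on which the restricted $(d-1)$-tuple $\widetilde{\mathsf{H}}^I_{\boldsymbol{\beta}}$ has full rank. On a chart where this map is bilipschitz, Federer's theorem in $\mathbb{R}^{d-1}$ (or the direct $d=2$ case) applies exactly as before. Either way, some $j$ satisfies $\mathcal{H}^1(\varphi_{\beta_j}(E))>0$.

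\textbf{Step 3 (product argument).} Suppose that for a given $E$ no index $j_E$ works. Then for each $j$ we can pick $\beta_j\in U_j$ with $\mathcal{H}^1(\varphi_{\beta_j}(E))=0$. The tuple $\boldsymbol{\beta}=(\beta_1,\ldots,\beta_d)$ lies in the product $U_1\times\cdots\times U_d$ but has no good witness for $E$, which contradicts Step 2. Hence some $U_{j_E}$ is disjoint from $\mathcal{B}_E$, and this is exactly \eqref{eq:stablegoodwitnessneighbourhood-genexample}.

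\textbf{Main obstacle.} The delicate step is Step 2. Theorem~\ref{thm:dcurvesrefined-genexample} was stated for families defined globally with a global hypersurface $M$, whereas here admissibility holds only on $W$, where $\Sigma_{\boldsymbol{\beta}}\cap W$ need not be closed in $\mathbb{R}^d$. The point to check is that only the part of $E$ inside $W$ matters, since $E\subseteq K\subset W$. Two facts make the local argument go through. First, $\Sigma_{\boldsymbol{\beta}}\cap W$ is relatively closed in $W$, so the exhaustion in Corollary~\ref{cor:dcurves-genexample} can use compacts $\{x\in\overline{B(0,m)}\cap K:\operatorname{dist}(x,\Sigma_{\boldsymbol{\beta}}\cap W)\geq 1/n\}$. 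Second, $M_{\boldsymbol{\beta}}$ is second countable. If either fact causes trouble, I would instead prove Step 2 directly by transcribing the proof of Theorem~\ref{thm:dcurvesrefined-genexample} with $M=\Sigma_{\boldsymbol{\beta}}\cap W$.
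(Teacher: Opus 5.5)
Your proposal is correct and follows essentially the same route as the paper: you take a box $U_1\times\cdots\times U_d\subseteq\mathcal{V}$ from Theorem~\ref{thm:localfoldstability-genexample}, show that every tuple in it is a good-witness tuple relative to $K$ by splitting into the regular case (Corollary~\ref{cor:dcurves-genexample}) and the critical case (the proof of Theorem~\ref{thm:dcurvesrefined-genexample} rerun on the local hypersurface $\Sigma_{\boldsymbol{\beta}}\cap W$), and conclude because a product of the sets $U_j\cap\mathcal{B}_E$ is empty only if some factor is empty. The closedness worry in your final paragraph does not arise, because each $\varphi_{\beta}$ is defined on all of $\mathbb{R}^d$, so $\Sigma_{\boldsymbol{\beta}}=J_{\boldsymbol{\beta}}^{-1}(0)$ is closed and Corollary~\ref{cor:dcurves-genexample} applies without modification.
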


The order of quantifiers in Theorem~\ref{thm:stablegoodwitnessneighbourhoods-genexample} shows that the result is \emph{uniform over choices of $1$-rectifiable sets} $E \subset K \subset \mathbb{R}^d$, up to the choice of good-witness index $j_E \in \{1,\ldots,d\}$, under the neighbourhood fold hypothesis of Theorem~\ref{thm:localfoldstability-genexample}. In particular, every tuple in
$$
U_1\times\cdots\times U_d
$$
is a good-witness tuple relative to $K$ in the sense of Definition~\ref{def:goodwitnesstuples-genexample}. After the proof, we state and prove a similar openness result with associated neighbourhoods $U_1^E,\ldots,U_d^E \subset \mathcal{A}$ depending upon the choice of $1$-rectifiable set $E \subset \mathbb{R}^d$.

\begin{proof}
Let $\mathcal{V}\subseteq\mathcal{A}^d$ be the neighbourhood furnished by Theorem~\ref{thm:localfoldstability-genexample}. Since $\mathcal{V}$ is open, we may choose open neighbourhoods
$
U_1,\ldots,U_d  \subseteq \mathcal{A}$ such that 
$
\boldsymbol{\alpha}^0 =(\alpha_1^0,\ldots,\alpha_d^0) \in U_1 \times \cdots \times U_d
$
and also
$
U_1\times\cdots\times U_d \subseteq \mathcal{V}.
$
Fix any Borel $1$-rectifiable set $E\subseteq K$ of positive length and let
$
\boldsymbol{\beta} = (\beta_1,\ldots,\beta_d) \in U_1\times\cdots\times U_d
$
be an arbitrary $d$-tuple of local indices near $\boldsymbol{\alpha}^0$.

As an initial consideration, if
$
\mathcal{H}^1\big(E\setminus\Sigma_{\boldsymbol{\beta}}\big)>0,
$
then Corollary~\ref{cor:dcurves-genexample}, applied after an appropriate compact localization away from $\Sigma_{\boldsymbol{\beta}}$, shows that at least one of the images $\varphi_{\beta_j}(E)$ has positive length. 

Otherwise, we may freely assume that
$$
\mathcal{H}^1\big(E\cap K \cap \Sigma_{\boldsymbol{\beta}}\big)>0.
$$
At every critical point in $K$, Lemma~\ref{lem:foldimpliestangentialimmersion-genexample} gives a $C^1$ critical hypersurface and the rank identity
$$
\rank\big(
D_x\mathsf{H}_{\boldsymbol{\beta}}(x)
\vert_{T_x\Sigma_{\boldsymbol{\beta}}}
\big)
=
d-1.
$$
The proof of Theorem~\ref{thm:dcurvesrefined-genexample}, localized to a neighbourhood of the compact set $\Sigma_{\boldsymbol{\beta}}\cap K$, again shows that at least one $\varphi_{\beta_j}(E)$ has positive length. Consequently, no tuple in $U_1\times\cdots\times U_d$ can consist entirely of bad parameters for $E$.

Writing $\mathcal{B}_E$ for the bad-parameter set, we have proved
$$
\big(U_1\cap\mathcal{B}_E\big)
\times\cdots\times
\big(U_d\cap\mathcal{B}_E\big)
=
\emptyset.
$$
A finite Cartesian product is empty only if at least one factor is empty. Hence
$$
U_{j_E}\cap\mathcal{B}_E=\emptyset
$$
for some $j_E$, which is exactly \eqref{eq:stablegoodwitnessneighbourhood-genexample}.\qedhere
\end{proof}

The compact localization in Theorem~\ref{thm:stablegoodwitnessneighbourhoods-genexample} is essential: local fold non-degeneracy controls only those portions of a rectifiable set that remain in the region of stability. A conclusion uniform over all positive-length rectifiable sets in $\mathbb{R}^d$ is available when fold non-degeneracy persists globally on a full parameter neighbourhood.

\begin{corollary}[Global Stable Neighbourhoods of Good Witnesses]
\label{cor:globalstablegoodwitnessneighbourhoods-genexample}
Let $\mathcal{A}\subseteq\mathbb{R}^m$ be open, and suppose that $(\alpha,x)\mapsto\varphi_\alpha(x)$ is jointly $C^2$ on $\mathcal{A}\times\mathbb{R}^d$. Fix a tuple
$
\boldsymbol{\alpha}^0 = (\alpha_1^0,\ldots,\alpha_d^0) \in\mathcal{A}^d,
$
and suppose that there exists an open neighbourhood
$
\boldsymbol{\alpha}^0 \in \mathcal{V} \subseteq \mathcal{A}^d
$
such that, for every $\boldsymbol{\beta}\in\mathcal{V}$, the associated encoding
map $\mathsf{H}_{\boldsymbol{\beta}}$ is fold non-degenerate at all of its critical points $y \in \Sigma_{\boldsymbol{\beta}} \subset \mathbb{R}^d$. Then there exist open neighbourhoods
$$
\alpha_j^0\in U_j\subseteq\mathcal{A},
\qquad
j=1,\ldots,d,
$$
such that the following holds. For every Borel $1$-rectifiable set
$E\subseteq\mathbb{R}^d$ satisfying $\mathcal{H}^1(E)>0$, there exists
an index $j_E\in\{1,\ldots,d\}$ for which
\begin{equation}\label{eq:globalstablegoodwitness-genexample}
\mathcal{H}^1\big(\varphi_\alpha(E)\big)>0
\qquad
\text{for every }\alpha\in U_{j_E}.
\end{equation}
The neighbourhoods $U_1,\ldots,U_d$ depend only on
$\boldsymbol{\alpha}^0$ and the global fold-stability neighbourhood
$\mathcal{V}$, while the specific choice of good-witness index $j_E$ may depend on $E$.
\end{corollary}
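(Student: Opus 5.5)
The plan is to reduce the corollary to Theorem~\ref{thm:dcurvesrefined-genexample} via Lemma~\ref{lem:foldimpliestangentialimmersion-genexample}, and then finish with the same product-of-bad-sets argument used for Theorem~\ref{thm:localgoodparameterneighbourhood-genexample}. Since $\mathcal{V}\subseteq\mathcal{A}^d$ is open in the product topology, we first choose open sets $\alpha_j^0\in U_j\subseteq\mathcal{A}$ with $U_1\times\cdots\times U_d\subseteq\mathcal{V}$. These depend only on $\boldsymbol{\alpha}^0$ and $\mathcal{V}$, as claimed.

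The key step is to show that every $\boldsymbol{\beta}\in\mathcal{V}$ is an admissible tuple, i.e.\ $\mathcal{V}\subseteq\mathfrak{G}$. Fix such a $\boldsymbol{\beta}$. By hypothesis, $\mathsf{H}_{\boldsymbol{\beta}}$ is fold non-degenerate on the open set $U=\mathbb{R}^d$. Lemma~\ref{lem:foldimpliestangentialimmersion-genexample} then shows that $M:=\Sigma_{\boldsymbol{\beta}}$ is a $C^1$ embedded hypersurface, or empty, which is allowed by the stated convention. The same lemma gives $\rank(D_x\mathsf{H}_{\boldsymbol{\beta}}(x)\vert_{T_xM})=d-1$ for every $x\in M$.

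The only point needing care is that ``locally a $C^1$ hypersurface'' upgrades to ``embedded submanifold''. This holds because $\Sigma_{\boldsymbol{\beta}}=\{J_{\boldsymbol{\beta}}=0\}$ is a closed subset of $\mathbb{R}^d$. At each of its points $D J_{\boldsymbol{\beta}}\neq0$, because the kernel-transversality condition gives $DJ_{\boldsymbol{\beta}}[v]\neq 0$ for a kernel vector $v$. So $\Sigma_{\boldsymbol{\beta}}$ is a regular level set of $J_{\boldsymbol{\beta}}$ restricted to the open set $\{DJ_{\boldsymbol{\beta}}\neq0\}$, and the Regular Level Set Theorem of the appendix yields a globally embedded hypersurface. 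Hence the critical-hypersurface and tangential-immersion conditions hold with this $M$, and $\boldsymbol{\beta}\in\mathfrak{G}$.

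Now fix a Borel $1$-rectifiable $E\subseteq\mathbb{R}^d$ with $\mathcal{H}^1(E)>0$, and let $\mathcal{B}_E$ be its bad-parameter set. Suppose every $U_j$ meets $\mathcal{B}_E$, and pick $\beta_j\in U_j\cap\mathcal{B}_E$. Then $\boldsymbol{\beta}=(\beta_1,\ldots,\beta_d)\in\mathcal{V}\cap\mathcal{B}_E^d\subseteq\mathfrak{G}\cap\mathcal{B}_E^d$. This contradicts \eqref{eq:badparametersavoidadmissibletuples-genexample} of Proposition~\ref{prop:parameterizedexceptionalset-genexample}, which itself comes from Theorem~\ref{thm:dcurvesrefined-genexample}. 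Hence some $U_{j_E}$ is disjoint from $\mathcal{B}_E$, which is exactly \eqref{eq:globalstablegoodwitness-genexample}.

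I expect no serious obstacle. The only delicate point is the upgrade from local to global embeddedness of $\Sigma_{\boldsymbol{\beta}}$ needed to apply Theorem~\ref{thm:dcurvesrefined-genexample} verbatim. Even that could be avoided: the proof of Theorem~\ref{thm:dcurvesrefined-genexample} only uses $M$ through a countable cover by relatively open charts, and $\Sigma_{\boldsymbol{\beta}}$, being a closed, locally regular level set, admits such a cover.
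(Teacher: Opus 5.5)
Your proposal is correct and follows the paper's proof essentially verbatim. Both arguments choose product neighbourhoods $U_1\times\cdots\times U_d\subseteq\mathcal{V}$, obtain admissibility of every tuple in $\mathcal{V}$ from Lemma~\ref{lem:foldimpliestangentialimmersion-genexample}, and reach a contradiction with Theorem~\ref{thm:dcurvesrefined-genexample}, which you route through its rephrasing in Proposition~\ref{prop:parameterizedexceptionalset-genexample}; your remark that $\Sigma_{\boldsymbol{\beta}}$ is a genuinely embedded hypersurface, as the zero set of $J_{\boldsymbol{\beta}}$ on the open set where $DJ_{\boldsymbol{\beta}}\neq0$, is also correct and makes explicit a step the paper leaves implicit.
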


\begin{proof}
Since $\mathcal{V}$ is open, we may choose open neighbourhoods
$\alpha_j^0\in U_j\subseteq\mathcal{A}$ such that
$$
U_1\times\cdots\times U_d
\subseteq
\mathcal{V}.
$$
Fix a Borel $1$-rectifiable set $E\subseteq\mathbb{R}^d$ satisfying
$\mathcal{H}^1(E)>0$, and let
$$
\mathcal{B}_E
:=
\left\{
\alpha\in\mathcal{A}:
\mathcal{H}^1\big(\varphi_\alpha(E)\big)=0
\right\}.
$$
Suppose, for the sake of contradiction, that every $U_j$ meets
$\mathcal{B}_E$. We could then choose
$$
\beta_j\in U_j\cap\mathcal{B}_E,
\qquad
j=1,\ldots,d.
$$
The resulting tuple
$$
\boldsymbol{\beta}
=
(\beta_1,\ldots,\beta_d)
\in
U_1\times\cdots\times U_d
\subseteq
\mathcal{V}
$$
is globally fold non-degenerate. Consequently, the associated family
$\mathsf{\Phi}_{\boldsymbol{\beta}}$ satisfies the
critical-hypersurface and tangential-immersion conditions on
$\mathbb{R}^d$. Theorem~\ref{thm:dcurvesrefined-genexample} therefore produces an index $j\in\{1,\ldots,d\}$ such that
$$
\mathcal{H}^1\big(\varphi_{\beta_j}(E)\big)>0.
$$
This contradicts $\beta_j\in\mathcal{B}_E$. Hence, one factor
$U_{j_E}\cap\mathcal{B}_E$ must be empty, which is
\eqref{eq:globalstablegoodwitness-genexample}.\qedhere
\end{proof}

For a fixed $1$-rectifiable set $E \subset \mathbb{R}^d$, the stability of a particular good witness requires less than fold non-degeneracy. We record this fact separately because it clarifies which part of Theorem~\ref{thm:stablegoodwitnessneighbourhoods-genexample} (namely, uniformity in the product components $U_1,\ldots,U_d \subseteq \mathcal{A}$ over all rectifiable $1$-sets $E$) comes from the fold hypothesis.

\begin{proposition}[Openness of the Good-Witness Condition]\label{prop:fixedgoodwitnessopen-genexample}
Let $K\subseteq\mathbb{R}^d$ be compact, let $\mathcal{A}\subseteq\mathbb{R}^m$ be open, and suppose that the mapping
$
(\alpha,x)\mapsto\varphi_{\alpha}(x)
$
is jointly $C^1$ on a neighbourhood of $\mathcal{A}\times K$. 

If $E\subseteq K$ is a Borel $1$-rectifiable set, define the set of $E$-dependent good-witness parameters
$$
\mathcal{G}_E
:=
\big\{
\alpha\in\mathcal{A}:
\mathcal{H}^1\big(\varphi_\alpha(E)\big)>0
\big\}.
$$
Then $\mathcal{G}_E$ is an open subset of $\mathcal{A}$.
\end{proposition}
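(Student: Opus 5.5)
The plan is to reduce everything to one-dimensional calculus along $C^1$ curves. Positivity of $\mathcal{H}^1(\varphi_\alpha(E))$ should be certified by a compact set of parameters along a curve on which the derivative of $\varphi_\alpha\circ\gamma$ is bounded below. Joint $C^1$ regularity then makes such a lower bound persist, uniformly in the parameter, for $\beta$ near $\alpha$.

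\emph{Step 1: decomposition.} Since $E$ is $1$-rectifiable, it is covered up to an $\mathcal{H}^1$-null set $N$ by countably many $C^1$ curves. So we may write $E\setminus N\subseteq\bigcup_i\gamma_i(A_i)$, where $\gamma_i:[a_i,b_i]\to\mathbb{R}^d$ is $C^1$ with $|\gamma_i'|=1$ and $A_i\subseteq[a_i,b_i]$ is Borel. This uses the standard $C^1$ refinement of Lipschitz rectifiability, together with arclength reparametrization at points where $\gamma_i'\neq0$; the points where $\gamma_i'=0$ have $\mathcal{H}^1$-null image. Since $\varphi_\alpha$ is Lipschitz on a neighbourhood of $K$, we have $\mathcal{H}^1(\varphi_\alpha(N))=0$.

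\emph{Step 2: a nondegenerate piece.} Set $g_i:=\varphi_\alpha\circ\gamma_i$, which is $C^1$ on $[a_i,b_i]$. For any Borel $A\subseteq[a_i,b_i]$ we have the one-dimensional estimate $\mathcal{H}^1(g_i(A))\le\int_A|g_i'|\,dt$. If $g_i'=0$ almost everywhere on $A_i$ for every $i$, then every $\varphi_\alpha(\gamma_i(A_i))$ is null, hence $\varphi_\alpha(E)$ is null, which is a contradiction. So for some $i$ the set $\{t\in A_i:g_i'(t)\neq0\}$ has positive measure. By inner regularity and the pigeonhole principle, we then find a compact set $S\subseteq A_i$ with $|S|>0$ and a constant $c>0$ such that $|g_i'|\ge c$ on $S$.

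\emph{Step 3: perturbation.} The map $(\beta,t)\mapsto D_x\varphi_\beta(\gamma_i(t))[\gamma_i'(t)]$ is jointly continuous. By uniform continuity on a compact set of the form $\overline{B(\alpha,r)}\times[a_i,b_i]$, there are $\rho>0$ and $\eta>0$ such that, for $|\beta-\alpha|<\rho$, the derivative $(\varphi_\beta\circ\gamma_i)'(t)$ has absolute value at least $c/2$ and the same sign as $g_i'$ at every $t$ within distance $\eta$ of $S$. Next cover $S$ by finitely many intervals of length $<\eta$ centred at points of $S$, and pick one such interval $I$ with $|S\cap I|>0$. For each $\beta$ with $|\beta-\alpha|<\rho$, the function $\varphi_\beta\circ\gamma_i$ is strictly monotone on $I$ with $|(\varphi_\beta\circ\gamma_i)'|\ge c/2$ there. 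Hence
\[
\mathcal{H}^1\big(\varphi_\beta(E)\big)\ge\mathcal{H}^1\big(\varphi_\beta(\gamma_i(S\cap I))\big)\ge\tfrac{c}{2}\,|S\cap I|>0.
\]
Here the second inequality follows from the change of variables formula for a monotone $C^1$ function. The first uses $\gamma_i(S\cap I)\subseteq\gamma_i(A_i)$, which lies in $E$ up to replacing $A_i$ by $\gamma_i^{-1}(E)\cap A_i$; this replacement can be made at the outset. It follows that the ball $B(\alpha,\rho)\cap\mathcal{A}$ lies in $\mathcal{G}_E$, so $\mathcal{G}_E$ is open.

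\emph{Main obstacle.} I expect the delicate point to be Step 3. The lower bound $|g_i'|\ge c$ holds only on the possibly nowhere-dense set $S$, so it must be upgraded to a full interval neighbourhood, uniformly in $\beta$, in order to get injectivity of $\varphi_\beta\circ\gamma_i$ and a lower bound on the measure of the image. The uniform continuity argument on $(\beta,t)$, choosing $\eta$ before selecting the interval $I$, handles this. A lesser bookkeeping issue is ensuring that the curves are genuinely $C^1$ rather than merely Lipschitz, so that derivatives along them are continuous.
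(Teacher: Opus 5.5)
Your proof is correct, but it takes a genuinely different route from the paper. The paper keeps Lipschitz curves and works on $E$ itself. After reducing to $E\subseteq\Gamma_{n_0}$, it uses approximate tangent lines $\tau_E$ and the Area Formula
\[
\int_E \big|D_x\varphi_{\alpha}(x)[\tau_E(x)]\big|\,d\mathcal{H}^1(x)=\int_{\mathbb{R}}\#\big(\varphi_{\alpha}^{-1}(t)\cap E\big)\,d\mathcal{H}^1(t).
\]
It picks $F\subseteq E$ on which the tangential Jacobian at $\alpha_0$ is at least $c$, and perturbs via $\sup_{x\in K}|D_x\varphi_\alpha-D_x\varphi_{\alpha_0}|<c/2$. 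It then concludes from $\int N(\varphi_\alpha|_F,t)\,dt\ge\tfrac c2\mathcal{H}^1(F)>0$, since the multiplicity vanishes off $\varphi_\alpha(F)$.

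Because that multiplicity integral certifies positivity without any injectivity, the paper never meets your ``main obstacle'' of spreading the bound from $S$ to an interval. It also only evaluates derivatives at points of $E\subseteq K$. You pay up front instead, with the $C^1$ (Lusin--Whitney type) refinement of rectifiability, after which everything is one-variable calculus. Continuity of $\gamma_i'$ is exactly what makes your interval upgrade work. In exchange you get an elementary proof with a quantitative, locally uniform bound $\mathcal{H}^1(\varphi_\beta(E))\ge\tfrac c2|S\cap I|$ for all $\beta$ near $\alpha$. That is slightly more than the paper's lower bound on the multiplicity integral, which only yields positivity of the image.

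Two bookkeeping remarks. First, the sign clause in Step 3 is unnecessary: a continuous derivative bounded away from $0$ on an interval automatically has constant sign. Second, joint $C^1$ regularity is assumed only near $\mathcal{A}\times K$, so your Step 2 assertion that $g_i$ is $C^1$ on all of $[a_i,b_i]$ needs a preliminary trimming. Cut the curves into compact pieces of $\gamma_i^{-1}(\mathcal{O})$, where $\mathcal{O}$ is a bounded open neighbourhood of $K$ with $\overline{B(\alpha,r)}\times\overline{\mathcal{O}}$ inside the region of regularity. Such an $\mathcal{O}$ exists by compactness of $\overline{B(\alpha,r)}\times K$. The cut is possible because $\gamma_i^{-1}(\mathcal{O})$ is relatively open and contains $\gamma_i^{-1}(E)$. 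This trimming is what makes $g_i\in C^1$ and your uniform-continuity step on $\overline{B(\alpha,r)}\times[a_i,b_i]$ legitimate.
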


Throughout the following proof, we use basic notions of rectifiability, such as approximate tangent lines, tangential Jacobians, and the Area Formula. A standard reference for this material is \cite[Chapter 3]{EG2025}.

\begin{proof}
Fix $\alpha_0\in\mathcal{G}_E$. By subdividing the Lipschitz parametrizations in the definition of rectifiability into compact intervals, there exist compact Lipschitz curve images $\Gamma_1,\Gamma_2,\ldots\subseteq\mathbb{R}^d$, each of finite $\mathcal{H}^1$ measure, and an $\mathcal{H}^1$-null set $N$ such that
$$
E\subseteq N\cup\bigcup_{n=1}^{\infty}\Gamma_n.
$$
The map $\varphi_{\alpha_0}$ is locally Lipschitz on a neighbourhood of the compact set $K$, and hence $\mathcal{H}^1(\varphi_{\alpha_0}(N))=0$. Since $\mathcal{H}^1(\varphi_{\alpha_0}(E))>0$, countable subadditivity therefore gives an index $n_0$ such that
$$
\mathcal{H}^1\big(
\varphi_{\alpha_0}(E\cap\Gamma_{n_0})
\big)>0.
$$
Replacing $E$ by $E\cap\Gamma_{n_0}$ for the remainder of the proof, we may assume that $E\subseteq\Gamma_{n_0}$ and $\mathcal{H}^1(E)<\infty$. Hence, at $\mathcal{H}^1$-almost every $x \in E$, there exists a unit vector $\tau_E(x) \in \mathbb{S}^{d-1}$ whose span is the approximate tangent line to $E$ at $x$. For each $\alpha \in \mathcal{A}$, the associated tangential Jacobian is then the function
$$
J_E\varphi_{\alpha}(x)
:=
\big|
D_x\varphi_{\alpha}(x)[\tau_E(x)]
\big|
$$
for $\mathcal{H}^1$-almost every $x\in E$.

By the Area Formula, we have
\begin{equation}\label{eq:areaformulagoodwitness-genexample}
\int_E J_E\varphi_{\alpha_0}(x)\,d\mathcal{H}^1(x)
=
\int_{\mathbb{R}}
N\big(\varphi_{\alpha_0}\vert_E,t\big)
\,d\mathcal{H}^1(t),
\end{equation}
where 
$$
N(\varphi_{\alpha_0}\vert_E,t) := \# \big(\varphi_{\alpha_0}^{-1} (t) \cap E \big), \qquad \forall t \in \mathbb{R}.
$$
Since $\mathcal{H}^1(\varphi_{\alpha_0}(E))>0$, the integral in \eqref{eq:areaformulagoodwitness-genexample} is positive. Hence, choose a measurable set $F\subseteq E$ and an associated constant $c>0$ such that
$
0<\mathcal{H}^1(F)<\infty
$
and also
$
J_E\varphi_{\alpha_0}(x)\geq c
$
for $\mathcal{H}^1$-almost every $x\in F$.

Now, the joint $C^1$-continuity of the map $(\alpha, x) \mapsto D_x\varphi_{\alpha}$ together with the compactness of $K$ thus give a neighbourhood $\alpha_0\in W\subseteq\mathcal{A}$ such that
$$
\sup_{x\in K}
\big \lvert
D_x\varphi_{\alpha}(x)-D_x\varphi_{\alpha_0}(x)
\big\rvert
<
\frac{c}{2}
$$
for every $\alpha\in W$. It follows that
$$
J_E\varphi_{\alpha}(x)\geq\frac{c}{2}
$$
for almost every $x\in F$. Another application of the Area Formula thus yields
$$
\int_{\mathbb{R}}
N\big(\varphi_{\alpha}\vert_F,t\big)
\,d\mathcal{H}^1(t)
\geq
\frac{c}{2}\mathcal{H}^1(F)
>
0, \qquad \forall \alpha \in W.
$$
Therefore,
$$
\mathcal{H}^1\big(\varphi_{\alpha}(E)\big) \geq \mathcal{H}^1\big(\varphi_{\alpha}(F)\big)>0,
\qquad
\forall\alpha\in W.
$$
This proves the result.\qedhere
\end{proof}

Corollary~\ref{cor:globalstablegoodwitnessneighbourhoods-genexample} and Proposition~\ref{prop:fixedgoodwitnessopen-genexample} thus provide a counterpoint to our discussion of openness conditions for good-witness parameters. If the corresponding openness is required to be \textit{uniform over all choices of rectifiable $1$-sets}, then an additional non-degeneracy condition on the family must be imposed. However, if the neighbourhood is allowed to vary with the choice of rectifiable $1$-set $E$, then standard arguments from classical geometric measure theory imply the result. In this sense, Proposition~\ref{prop:fixedgoodwitnessopen-genexample} highlights the strength of our uniform openness results obtained from fold non-degeneracy.

The following example is included to illustrate why one requires the compact localization hypothesis in Theorem \ref{thm:localfoldstability-genexample}.

\begin{example}[Global Instability and the Necessity of Compact Localization]
\label{ex:globalinstabilityescapingcriticalline-genexample}
For each $\varepsilon\in(-1,1)$, consider the family
$$
\mathsf{\Phi}_{\varepsilon}
:=
\big\{
\varphi_{1,\varepsilon},
\varphi_{2,\varepsilon}
\big\},
$$
where
$$
\varphi_{1,\varepsilon}(x,y):=x
$$
and
$$
\varphi_{2,\varepsilon}(x,y)
:=
(1-\varepsilon x)^2y.
$$
The associated canonical encoding map is
$$
\mathsf{H}_{\varepsilon}(x,y)
=
\big(
x,(1-\varepsilon x)^2y
\big).
$$
For any $\varepsilon \in (-1,1)$, the mapping
$$
(\varepsilon; x,y)
\longmapsto
\mathsf{H}_{\varepsilon}(x,y)
$$
is jointly $C^\infty$ on $(-1,1) \times \mathbb{R}^2$. Moreover,
$$
\mathsf{H}_0(x,y)=(x,y),
$$
so the base encoding map is the identity and has no critical points.
In particular, it is fold non-degenerate on all of $\mathbb{R}^2$,
with the fold condition being vacuously satisfied.

For $\varepsilon\neq0$, the spatial differential is
$$
D_{(x,y)}\mathsf{H}_{\varepsilon}(x,y)
=
\begin{pmatrix}
1&0\\
-2\varepsilon(1-\varepsilon x)y&(1-\varepsilon x)^2
\end{pmatrix}.
$$
Consequently,
$$
J_{\varepsilon}(x,y)
:=
\det D_{(x,y)}\mathsf{H}_{\varepsilon}(x,y)
=
(1-\varepsilon x)^2,
$$
and hence
$$
\Sigma_{\varepsilon}
=
\left\{
(x,y)\in\mathbb{R}^2:
x=\frac{1}{\varepsilon}
\right\}.
$$
This is a vertical line. At every point
$$
p
=
\left(\frac{1}{\varepsilon},y\right)
\in\Sigma_{\varepsilon},
$$
we have
$$
D_{(x,y)}\mathsf{H}_{\varepsilon}(p)
=
\begin{pmatrix}
1&0\\
0&0
\end{pmatrix},
$$
and therefore
$$
\ker D_{(x,y)}\mathsf{H}_{\varepsilon}(p)
=
\operatorname{span}\{(0,1)\}.
$$
On the other hand,
$$
T_p\Sigma_{\varepsilon}
=
\operatorname{span}\{(0,1)\}.
$$
Hence, the fold non-degeneracy condition fails dramatically, since
$$
\ker D_{(x,y)}\mathsf{H}_{\varepsilon}(p)
=
T_p\Sigma_{\varepsilon} \neq \{0\}.
$$
Equivalently,
$$
\rank\left(
D_{(x,y)}\mathsf{H}_{\varepsilon}(p)
\vert_{T_p\Sigma_{\varepsilon}}
\right)
=
0,
$$
whereas the tangential-immersion condition requires this rank to equal
$1$. Hence, while each family $\mathsf{\Phi}_{\varepsilon}$ satisfies the
critical-hypersurface condition, it necessarily fails the tangential-immersion condition at every point of its critical set whenever $\varepsilon \neq 0$.

The fold condition fails along the same critical line. Indeed,
$$
D_{(x,y)}J_{\varepsilon}(x,y)
=
\big(
-2\varepsilon(1-\varepsilon x),0
\big),
$$
and therefore
$$
D_{(x,y)}J_{\varepsilon}(p)=0, \qquad \forall p \in \Sigma_{\varepsilon}.
$$
In particular, if
$$
v\in
\ker D_{(x,y)}\mathsf{H}_{\varepsilon}(p) \setminus \{0\},
$$
then
$$
D_{(x,y)}J_{\varepsilon}(p)[v]=0.
$$
Thus $\mathsf{H}_{\varepsilon}$ fails to be fold non-degenerate at
every point of $\Sigma_{\varepsilon}$.

This geometric failure produces, for each nonzero index $\varepsilon \in (-1,1)$, an explicit compact rectifiable set $E_{\varepsilon} \subset \mathbb{R}^2$ for which neither member of $\mathsf{\Phi}_{\varepsilon}$ is a good witness. For each $\varepsilon\neq0$, define
$$
E_{\varepsilon}
:=
\left\{
\left(\frac{1}{\varepsilon},t\right):
0\leq t\leq1
\right\}
\subseteq
\Sigma_{\varepsilon}.
$$
The set $E_{\varepsilon}$ is a compact $1$-rectifiable line segment satisfying
$$
\mathcal{H}^1(E_{\varepsilon})=1.
$$
Nevertheless,
$$
\varphi_{1,\varepsilon}(E_{\varepsilon})
=
\left\{\frac{1}{\varepsilon}\right\}
$$
and
$$
\varphi_{2,\varepsilon}(E_{\varepsilon})
=
\{0\}.
$$
Consequently,
$$
\mathcal{H}^1
\big(
\varphi_{1,\varepsilon}(E_{\varepsilon})
\big)
=
\mathcal{H}^1
\big(
\varphi_{2,\varepsilon}(E_{\varepsilon})
\big)
=
0.
$$
Thus neither member of $\mathsf{\Phi}_{\varepsilon}$ is a good witness for $E_{\varepsilon}$. This conclusion holds for every $\varepsilon\in(0,1)$, which is itself a parameter set of positive $\mathcal{H}^1$ measure.

This example does not contradict Theorem~\ref{thm:localfoldstability-genexample}: the critical line
$
x=\frac{1}{\varepsilon}
$
escapes to infinity as $\varepsilon \rightarrow 0^+$. Hence, on every fixed compact set $K\subseteq\mathbb{R}^2$, the map $\mathsf{H}_{\varepsilon}$ has no critical points for all sufficiently small $\varepsilon$, and
$
\mathsf{H}_{\varepsilon}
\longrightarrow
\mathsf{H}_0
$
in $C^2(K)$. The example instead shows that global fold non-degeneracy of a single base map does not provide a common parameter neighbourhood on which fold non-degeneracy holds throughout all of $\mathbb{R}^2$. Such a global conclusion thus requires a genuinely uniform global hypothesis.
\end{example}

\subsection{Comparison with Peres--Schlag transversality}\label{subsec:peresschlagcomparison-genexample}

There is a useful comparison between Theorem~\ref{thm:localfoldstability-genexample} and the transversality framework for generalized projections introduced by Peres and Schlag \cite{PS2000}. We first recall their framework and then compare its conclusions with those of the present paper.

Let $K\subseteq\mathbb{R}^d$ be compact and consider a parameterized family of scalar mappings
$$
\Pi_\alpha:K\rightarrow\mathbb{R},
\qquad
\alpha\in\mathcal{A}\subseteq\mathbb{R}^m.
$$
For $x\neq y$, Peres and Schlag examine the normalized two-point difference
$$
\Phi_\alpha(x,y)
:=
\frac{\Pi_\alpha(x)-\Pi_\alpha(y)}{|x-y|}
$$
and, at least in the scalar-valued, order-zero case, impose the following transversality condition
\begin{equation}\label{eq:peresschlagorderzero-genexample}
\big|\Phi_\alpha(x,y)\big|\leq c
\qquad\Longrightarrow\qquad
\big|\nabla_\alpha\Phi_\alpha(x,y)\big|\geq c
\end{equation}
for some uniform constant $c>0$. Thus, when the images of two points are close relative to $|x-y|$, variation in $\alpha$ separates them at a controlled first-order rate.

More generally, when $\Pi_\alpha$ takes values in $\mathbb{R}^k$, the parameter derivative is required to have full rank $k$. Quantitatively, Peres and Schlag thus require a lower bound of the form
$$
\det\Big(
D_\alpha\Phi_\alpha(x,y)
D_\alpha\Phi_\alpha(x,y)^{\mathsf{T}}
\Big)
\geq c^2
$$
whenever $\Phi_\alpha(x,y)$ is proportionally small, together with upper bounds for higher parameter derivatives of $\Phi_\alpha$. Their transversality conditions of positive order allow corresponding powers of the scale $|x-y|$ in these inequalities; see \cite[Definitions 2.7 and 7.2]{PS2000}.

The fold condition in Definition~\ref{def:foldnondegeneracy-genexample} is instead a spatial transversality condition. For a fixed tuple $\boldsymbol{\alpha}\in\mathcal{A}^d$, it considers
$
J_{\boldsymbol{\alpha}}(x) = \det D_x\mathsf{H}_{\boldsymbol{\alpha}}(x)
$
and requires
$$
D_xJ_{\boldsymbol{\alpha}}(x)[v]\neq0,
\qquad
\forall v\in\ker D_x\mathsf{H}_{\boldsymbol{\alpha}}(x) \setminus \{0\}.
$$
Consequently, the two frameworks control different degeneracy sets. Peres--Schlag transversality imposes parameter non-degeneracy near
$$
\bigg\{(x,y) \in \mathbb{R}^d \times \mathbb{R}^d :
\left|
\frac{\Pi_{\alpha} (x) - \Pi_{\alpha}(y)}
{\lvert x - y \rvert}
\right|
\leq c
\bigg\}
$$
which is a scale-invariant neighbourhood of the exact incidence set $\Pi_\alpha(x)=\Pi_\alpha(y)$. Fold non-degeneracy instead controls the joint spatial critical set
$$
\big\{
(\boldsymbol{\alpha},x) \in \mathcal{A}^d \times \mathbb{R}^d:
\det D_x\mathsf{H}_{\boldsymbol{\alpha}}(x)=0
\big\},
$$
by requiring the determinant to vary in every nonzero direction in $\ker D_x\mathsf{H}_{\boldsymbol{\alpha}}$.

The two conditions are independent without further hypotheses linking the parameter and spatial variables. Indeed, the prescribed-fold family of Proposition~\ref{prop:prescribedfoldhypersurface-esoteric} is fold non-degenerate for every affinely independent tuple, but the reflected pair
$$
(x',g(x')+s),
\qquad
(x',g(x')-s),
\qquad s\neq0,
$$
has the same image under $\varphi_\alpha^g$ for every $\alpha$. Thus its normalized two-point difference and all of its parameter derivatives vanish, so fold non-degeneracy does not imply Peres--Schlag transversality of any finite order. Conversely, the squared-distance family
$$
\Pi_p(x):=|x-p|^2
$$
satisfies the order-zero condition uniformly, since
$$
\left|\nabla_p \bigg(
\frac{\Pi_p(x)-\Pi_p(y)}{|x-y|}
\bigg)\right|=2
\qquad(x\neq y).
$$
Nevertheless, in $\mathbb{R}^3$ an encoding formed from three collinear pins has identically vanishing determinant and fails the fold condition at its rank-two points. Hence, Peres--Schlag transversality does not by itself imply fold non-degeneracy of an arbitrary finite encoding. The two frameworks are distinct geometric mechanisms with complementary projection-theoretic consequences.

Given a finite measure $\mu$ supported on some compact fractal set $K \subseteq \mathbb{R}^d$, Peres and Schlag study the pushforward measures
$$
\nu_\alpha
:=
(\Pi_\alpha)_\#\mu.
$$
The main technique in \cite{PS2000} controls averaged Sobolev norms of $\nu_\alpha$ over $\alpha\in\mathcal{A}$. This gives dimensional bounds for the exceptional parameters at which the Sobolev dimension of $\nu_\alpha$ drops. Suppose that $\mathcal{A}\subseteq\mathbb{R}^m$ is open and that $\mu$ has finite $\alpha_0$-energy for some $\alpha_0>1$. In the scalar-valued, order-zero case, their higher-dimensional parameter theorem gives, under the relevant transversality and regularity hypotheses, the model estimate
\begin{equation}\label{eq:peresschlagexceptional-genexample}
\dim\big\{
\alpha\in\mathcal{A}:
\dim_{\mathrm{s}}(\nu_\alpha)\leq1
\big\}
\leq
m+1-\alpha_0;
\end{equation}
see \cite[Theorem 7.3]{PS2000}. Since a measure on $\mathbb{R}$ having Sobolev dimension greater than one is absolutely continuous \cite[Definition 2.3]{PS2000}, the same expression bounds the dimension of parameters for which the image has zero Lebesgue measure. For a compact set of Hausdorff dimension $s>1$, Frostman's lemma permits a choice of $\mu$ with finite $\alpha_0$-energy for every fixed $1<\alpha_0<s$; passing to the limit $\alpha_0\uparrow s$ yields the familiar bound $m+1-s$. At the rectifiable endpoint $s=1$, however, the limiting expression is the ambient bound $m=\dim\mathcal{A}$ and gives no further structural information about the exceptional set.

Theorem~\ref{thm:dcurvesrefined-genexample} works precisely at this endpoint, with its stronger conclusion arising from the additional geometric information supplied by rectifiability and by the finite encoding map: every admissible tuple must contain a good witness. Such a conclusion should not be expected without rectifiability, since the method relies on the local Lipschitz parametrizations and approximate tangent structure of rectifiable sets. Thus the Peres--Schlag method supplies quantitative exceptional-set estimates above the rectifiable endpoint $s=1$, whereas the present method supplies deterministic and, in several examples, structural information at the endpoint itself.

Both methods turn quantitative transversality into openness and stability. In the Peres--Schlag framework, the lower bound is placed on the parameter derivative of a normalized two-point difference. In Theorem~\ref{thm:localfoldstability-genexample}, it is placed on the spatial derivative of the Jacobian determinant in its kernel direction. When both calculations are available, they provide genuinely different information: Theorem~\ref{thm:localfoldstability-genexample} gives stable finite collections of witness maps, while Peres--Schlag theory can additionally bound the Hausdorff dimension of the full exceptional parameter set.

\begin{remark}
The author will treat the general case of $C^2$ images of rectifiable $k$-sets, for $k\in\{2,\ldots,d-1\}$, in forthcoming work. For now, however, we continue to refer to our results as concerning generalized \textit{curve} projections, as opposed to \textit{surface} projections, at the rectifiable endpoint.
\end{remark}

\section{Generalized distances, smoothed polyhedral norms, and \texorpdfstring{$C^2$}{C2} graphs}
\label{sec:exoticcurveprojections-esoteric}

This section concludes the paper with a few more exotic examples than pinned distance functions and radial projections. The first concerns nonlinear
anisotropic pinned-distance functionals. The second is a smooth model
for pinned distances generated by a polyhedral norm. The third shows
that an admissible family may have a critical hypersurface with
essentially arbitrary prescribed $C^2$ geometry. In each case, the
Jacobian determinant and fold non-degeneracy are computed explicitly, thus providing the reader with a solid library of examples for understanding the more abstract techniques introduced in Section \ref{sec:generalizedcurveprojections-genexample}.

\subsection{Nonlinear anisotropic distance functionals}

Distances generated by convex bodies have been studied extensively in
connection with Falconer's distance problem; see, for example,
\cite{IL2005}, which considers these variants of Falconer's distance
problem, as well as other discrete variants. The first class considered
here consists of nonlinear anisotropic distance functionals for which
the fold calculation remains completely explicit. The necessary
notation is fixed next.

Let $A$ be an invertible $d\times d$ matrix, and let
$\eta: [0,\infty)\rightarrow \mathbb{R}$ be any $C^2$ function whose derivative $\eta'$ is everywhere non-vanishing. For each $q\in\mathbb{R}^d$, define
$$
\varphi_q^{A,\eta}(x)
:=
\eta\big(|A(x-q)|^2\big),
\qquad
x\in\mathbb{R}^d,
$$
where, as before, $\lvert \cdot \rvert$ denotes the standard Euclidean distance on $\mathbb{R}^d$. These mappings include squared ellipsoidal distances and their
corresponding nonlinear reparametrizations by $\eta$.

\begin{proposition}[Nonlinear Anisotropic Distance Functionals]
\label{prop:generalizeddistancestability-esoteric}
Let
$
\mathcal{Q} := \{q^{(1)},\ldots,q^{(d)}\} \subseteq\mathbb{R}^d
$
be an affinely independent point set. Then the family
$$
\mathsf{\Phi}_{\mathcal{Q}}^{A,\eta}
:=
\big\{
\varphi_{q^{(1)}}^{A,\eta},
\ldots,
\varphi_{q^{(d)}}^{A,\eta}
\big\}
$$
is admissible on $\mathbb{R}^d$. More precisely, the associated critical set is
$
\Sigma_{\mathcal{Q}}^{A,\eta} = \Aff(\mathcal{Q}),
$
and, at each of these critical points, the corresponding encoding map is fold non-degenerate.
\end{proposition}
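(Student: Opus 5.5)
The plan is to reduce everything to the row-reduction computation already carried out in Proposition~\ref{prop:pinneddistanceadmissible-examples}. Then I would invoke Lemma~\ref{lem:foldimpliestangentialimmersion-genexample} with $U=\mathbb{R}^d$ to get admissibility from fold non-degeneracy.

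\emph{Factor the derivative.} Write $G:=A^{\mathsf T}A$, which is symmetric positive definite, and $s_j(x):=|A(x-q^{(j)})|^2$. The chain rule gives
$$
\nabla\varphi^{A,\eta}_{q^{(j)}}(x)=2\,\eta'(s_j(x))\,G\,(x-q^{(j)}).
$$
Hence
$$
D_x\mathsf{H}(x)=2\,\mathrm{diag}\big(\eta'(s_1(x)),\ldots,\eta'(s_d(x))\big)\,P(x)\,G,
$$
where $P(x)$ is the matrix with rows $(x-q^{(j)})^{\mathsf T}$. By \eqref{eq:pinneddistancejacobian-examples}, $\det P(x)=c_{\mathcal Q}'\langle x-q^{(1)},n_{\mathcal Q}\rangle$ with $c'_{\mathcal Q}\neq0$. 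Therefore
$$
J(x)=f(x)\,\langle x-q^{(1)},n_{\mathcal Q}\rangle,\qquad f(x):=2^d c'_{\mathcal Q}\det G\prod_j\eta'(s_j(x)).
$$
The factor $f$ is $C^1$ and nowhere zero, because $\eta'$ never vanishes. It follows at once that $\Sigma=\Aff(\mathcal Q)$.

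\emph{Rank and kernel on the critical set.} For $x\in\Aff(\mathcal Q)$, the diagonal factor and $G$ are invertible, so $\rank D_x\mathsf H(x)=\rank P(x)$. The pinned-distance proof shows this rank is $d-1$, with $\ker P(x)=\vspan\{n_{\mathcal Q}\}$. Consequently $D_x\mathsf H(x)v=0$ if and only if $Gv\in\vspan\{n_{\mathcal Q}\}$, so
$$
\ker D_x\mathsf H(x)=\vspan\{G^{-1}n_{\mathcal Q}\}.
$$
This establishes the rank-stability condition \eqref{eq:foldrankstability-genexample}.

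\emph{Kernel transversality.} This is the one step that differs from the Euclidean case, because the kernel is no longer the normal $n_{\mathcal Q}$ itself. Since $\langle x-q^{(1)},n_{\mathcal Q}\rangle$ vanishes on $\Aff(\mathcal Q)$, the product rule gives $D_xJ(x)=f(x)\,n_{\mathcal Q}^{\mathsf T}$ there. Hence
$$
D_xJ(x)[G^{-1}n_{\mathcal Q}]=f(x)\,\langle n_{\mathcal Q},G^{-1}n_{\mathcal Q}\rangle\neq0,
$$
since $G^{-1}$ is positive definite. This is the point where the structure $G=A^{\mathsf T}A$ is used: a general invertible $G$ could make the kernel tangent to the hyperplane. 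The same nonvanishing holds for every nonzero multiple of $G^{-1}n_{\mathcal Q}$, which gives \eqref{eq:foldkerneltransversality-genexample}.

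\emph{Admissibility.} With fold non-degeneracy established on $\mathbb{R}^d$, Lemma~\ref{lem:foldimpliestangentialimmersion-genexample} yields the critical-hypersurface condition with $M=\Aff(\mathcal Q)$ and full tangential rank $d-1$, i.e., admissibility. One regularity point should be noted: $\eta\in C^2$ makes each $\varphi^{A,\eta}_{q}$ a $C^2$ function. Therefore $J$ is $C^1$, which is exactly what the Regular Level Set Theorem requires in that lemma.
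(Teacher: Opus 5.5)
Your proof is correct and follows essentially the paper's route: factor off the invertible diagonal matrix $\operatorname{diag}(\eta'(s_j(x)))$, reduce to the Euclidean pinned-distance determinant, identify the one-dimensional kernel on $\Aff(\mathcal{Q})$, differentiate $J$ along it, and conclude with Lemma~\ref{lem:foldimpliestangentialimmersion-genexample}. The only difference is bookkeeping. The paper substitutes $y=Ax$ (pins $Aq^{(j)}$, kernel $A^{-1}n$ with $n$ normal to $A\,\Aff(\mathcal{Q})$), so the transversality value $D_xJ_{A,2}[A^{-1}n]=\det(A)c_{\mathcal{Q}}\neq0$ comes from the change of variables, whereas you stay in the original coordinates with $G=A^{\mathsf T}A$ and use positive definiteness of $G^{-1}$ instead; the two kernels coincide, since $n\parallel A^{-\mathsf T}n_{\mathcal{Q}}$ gives $A^{-1}n\parallel G^{-1}n_{\mathcal{Q}}$.
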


\begin{proof}
For each $j\in\{1,\ldots,d\}$, set
$$
s_j(x):=|A(x-q^{(j)})|^2
$$
and define the squared anisotropic encoding map by
$$
\mathsf{H}_{\mathcal{Q}}^{A,2}(x)
:=
\big(
s_1(x),\ldots,s_d(x)
\big).
$$
The chain rule gives
$$
D_x\varphi_{q^{(j)}}^{A,\eta}(x)
=
\eta'\big(s_j(x)\big)D_xs_j(x).
$$
Consequently,
\begin{equation}\label{eq:anisotropicdiagonalfactorization-esoteric}
D_x\mathsf{H}_{\mathcal{Q}}^{A,\eta}(x)
=
\mathsf{D}_{\eta}(x)
D_x\mathsf{H}_{\mathcal{Q}}^{A,2}(x),
\end{equation}
where $\mathsf{D}_{\eta}(x)$ is the $d \times d$ diagonal matrix with entries
$
\eta'(s_j(x))
$
for each
$
j=1,\ldots,d.
$
Since $\eta'$ is non-vanishing, we know that
$\mathsf{D}_{\eta}(x)$ is invertible for every $x$.

We first analyze the squared encoding map. Make the invertible change of variables
$
y:=Ax
$
and set
$
p^{(j)}:=Aq^{(j)},
$
and observe that the point set $\mathcal{P} := \{p^{(1)},\ldots,p^{(d)}\}$ is affinely independent. In the
$y$ variable, the squared encoding map thus becomes
$$
\widetilde{\mathsf{H}}_{\mathcal{Q}}(y)
:=
\big(
|y-p^{(1)}|^2,\ldots,|y-p^{(d)}|^2
\big).
$$
After an appropriate elementary row operation, its Jacobian determinant thus satisfies
\begin{align}
\det D_y\widetilde{\mathsf{H}}_{\mathcal{Q}}(y)
&=
2^d
\det
\begin{pmatrix}
(y-p^{(1)})^{\mathsf{T}}\\
(p^{(1)}-p^{(2)})^{\mathsf{T}}\\
\vdots\\
(p^{(1)}-p^{(d)})^{\mathsf{T}}
\end{pmatrix}
\notag\\
&=
c_{\mathcal{Q}}
\big\langle
y-p^{(1)},n_{\mathcal{Q}}
\big\rangle,
\label{eq:anisotropicdistancedeterminant-esoteric}
\end{align}
where $n_{\mathcal{Q}}$ is a unit normal vector to
$\Aff (\mathcal{P})$ and we are guaranteed that
$c_{\mathcal{Q}}\neq0$ since the point set $\mathcal{P}$ is affinely independent.

The preceding calculation shows that the critical set of
$\widetilde{\mathsf{H}}_{\mathcal{Q}}$ is precisely
$
\Aff \big( \mathcal{P}\big).
$
At a point of this hyperplane, the row space of
$D_y\widetilde{\mathsf{H}}_{\mathcal{Q}}$ is precisely its tangent space; therefore,
$$
\ker D_y\widetilde{\mathsf{H}}_{\mathcal{Q}}(y)
=
\operatorname{span}\{n_{\mathcal{Q}}\}.
$$
Moreover, the identity \eqref{eq:anisotropicdistancedeterminant-esoteric} gives
$$
D_y\big(
\det D_y\widetilde{\mathsf{H}}_{\mathcal{Q}}
\big)(y)
[n_{\mathcal{Q}}]
=
c_{\mathcal{Q}}
\neq0.
$$
Thus, the canonical encoding map is fold non-degenerate in the $y$ variable.
The displayed kernel and determinant calculations are precisely the
kernel-transversality criterion, as verified in Proposition~\ref{prop:kerneltransversality-diffgeo}.

Returning to the $x$ variable, we have
$$
D_x\mathsf{H}_{\mathcal{Q}}^{A,2}(x)
=
D_y\widetilde{\mathsf{H}}_{\mathcal{Q}}(Ax)A.
$$
Hence its critical set is
$$
A^{-1}
\Aff \big(\mathcal{P}\big)
=
\Aff(\mathcal{Q}).
$$
If $x$ belongs to this critical hyperplane, then
$$
\ker D_x\mathsf{H}_{\mathcal{Q}}^{A,2}(x)
=
\operatorname{span}
\{A^{-1}n_{\mathcal{Q}}\}.
$$
Writing
$$
J_{A,2}(x)
:=
\det D_x\mathsf{H}_{\mathcal{Q}}^{A,2}(x),
$$
the chain rule and \eqref{eq:anisotropicdistancedeterminant-esoteric} give
$$
D_xJ_{A,2}(x)[A^{-1}n_{\mathcal{Q}}]
=
\det(A)c_{\mathcal{Q}}
\neq0.
$$
Thus the squared anisotropic encoding is fold non-degenerate.

Finally, let
$$
J_{A,\eta}(x)
:=
\det D_x\mathsf{H}_{\mathcal{Q}}^{A,\eta}(x).
$$
By \eqref{eq:anisotropicdiagonalfactorization-esoteric},
$$
J_{A,\eta}(x)
=
\det\mathsf{D}_{\eta}(x)J_{A,2}(x).
$$
The invertibility of $\mathsf{D}_{\eta}(x)$ shows that the two encodings have the same critical set and the same kernel at every
critical point. Hence, along this common critical set, $J_{A,2}=0$, and, for
every kernel vector $v$,
$$
D_xJ_{A,\eta}(x)[v]
=
\det\mathsf{D}_{\eta}(x)
D_xJ_{A,2}(x)[v] \neq 0.
$$
The fold non-degeneracy follows. Lemma~\ref{lem:foldimpliestangentialimmersion-genexample} then gives the
critical-hypersurface and tangential-immersion conditions.
Equivalently, the final kernel calculation can be inferred by referencing
Proposition~\ref{prop:kerneltransversality-diffgeo}.\qedhere
\end{proof}

The following is an immediate corollary of Section~\ref{sec:generalizedcurveprojections-genexample}. It also provides
conceptual justification for the more abstract framework developed
there, since it shows that the preceding pinned-distance results are
stable under every $C^2$ reparametrization $\eta$ whose derivative is
nowhere zero.

\begin{corollary}[Exceptional Parameters for Anisotropic Distances]
\label{cor:generalizeddistanceexceptionalset-esoteric}
Let $E\subseteq\mathbb{R}^d$ be a Borel $1$-rectifiable set satisfying
$\mathcal{H}^1(E)>0$, and define
$$
\mathcal{B}_E^{A,\eta}
:=
\left\{
q\in\mathbb{R}^d:
\mathcal{H}^1
\big(
\{\eta(|A(x-q)|^2):x\in E\}
\big)
=0
\right\}.
$$
Then $\mathcal{B}_E^{A,\eta}$ is contained in an affine subspace of
dimension at most $d-2$.
\end{corollary}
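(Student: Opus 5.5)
The plan is to deduce this from part (2) of Proposition~\ref{prop:parameterizedexceptionalset-genexample}. We take the parameter space $\mathcal{A}:=\mathbb{R}^d$, so $m=d$, and the family $\mathscr{C}_{\mathcal{A}}:=\{\varphi_q^{A,\eta}\}_{q\in\mathbb{R}^d}$ of $C^2$ maps on $\mathbb{R}^d$. Each $\varphi_q^{A,\eta}$ is $C^2$ because $\eta$ is $C^2$ on $[0,\infty)$ and $x\mapsto|A(x-q)|^2$ is a polynomial with values in $[0,\infty)$. With this choice, the bad-parameter set $\mathcal{B}_E$ of Proposition~\ref{prop:parameterizedexceptionalset-genexample} is literally $\mathcal{B}_E^{A,\eta}$. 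Unlike Corollary~\ref{cor:pinneddistancefinitewitness-examples}, no passage between distances and squared distances is needed, since the statement is phrased directly in terms of $\varphi_q^{A,\eta}$.

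The one hypothesis of part (2) to verify is that every affinely independent tuple $(q^{(1)},\ldots,q^{(d)})\in(\mathbb{R}^d)^d$ lies in $\mathfrak{G}$, i.e.\ the family $\mathsf{\Phi}_{\mathcal{Q}}^{A,\eta}$ is admissible. This is exactly Proposition~\ref{prop:generalizeddistancestability-esoteric}. It identifies the critical set as the hyperplane $\Aff(\mathcal{Q})$, which is a $C^1$ (indeed affine) embedded hypersurface, giving the critical-hypersurface condition. It also gives global fold non-degeneracy, and Lemma~\ref{lem:foldimpliestangentialimmersion-genexample} applied with $U=\mathbb{R}^d$ converts this into the tangential-immersion condition along $\Aff(\mathcal{Q})$.

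Part (2) of Proposition~\ref{prop:parameterizedexceptionalset-genexample} then yields an affine subspace $V_E\subseteq\mathbb{R}^d$ of dimension at most $d-2$ containing $\mathcal{B}_E^{A,\eta}$. Unwinding it: if $\dim\Aff(\mathcal{B}_E^{A,\eta})\geq d-1$, we could pick $d$ affinely independent bad pins. The resulting admissible tuple would have all images null, contradicting Theorem~\ref{thm:dcurvesrefined-genexample}.

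There is essentially no obstacle, as everything is bookkeeping. The only point needing care is the domain of $\eta$: it is defined only on $[0,\infty)$, so $C^2$ regularity of $\varphi_q^{A,\eta}$ should be checked, in particular at points where $s_j=0$, i.e.\ at the pin itself. I would handle this by reading $C^2$ on $[0,\infty)$ as one-sided derivatives, equivalently by extending $\eta$ to a $C^2$ function on a neighbourhood of $[0,\infty)$ with $\eta'\neq0$ near $0$. The composition is then $C^2$ on all of $\mathbb{R}^d$, so the global framework applies without invoking the open-domain remark.
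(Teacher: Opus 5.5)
Your proposal is correct and matches the paper's proof: Proposition~\ref{prop:generalizeddistancestability-esoteric} shows that every affinely independent $d$-tuple of pins is admissible, and part (2) of Proposition~\ref{prop:parameterizedexceptionalset-genexample} then gives the affine subspace. Your remark about extending $\eta$ past $0$, so that $\varphi_q^{A,\eta}$ is $C^2$ at the pin, is a sound detail that the paper leaves implicit.
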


\begin{proof}
Proposition~\ref{prop:generalizeddistancestability-esoteric} shows that every
affinely independent $d$-tuple of parameters is admissible. The result
therefore follows immediately from Proposition~\ref{prop:parameterizedexceptionalset-genexample}.\qedhere
\end{proof}

\subsection{Bregman functionals associated with polyhedral norms}

We next construct a smooth generalized pinned-distance model associated
to an arbitrary polyhedral norm. Rather than considering translates of a
regularized norm, we consider the Bregman functionals generated by a
smooth, strictly convex regularization. We refer to
\cite{KY2008,Schneider2014} for standard background on convex bodies, polar duality, support functions, and Minkowski functionals.  Bregman functionals originate in \cite{Bregman1967}; a modern treatment from
the viewpoint of convex analysis and Legendre functions is given in \cite{BB1997}.

This example is particularly useful because it combines a transparent
geometric model with an equally explicit analytic non-degeneracy
mechanism: the level sets of a Bregman functional
generated by a strictly convex potential are smooth strictly convex
hypersurfaces, and the relevant pin parameters become affine after
passage to the dual coordinates $\nabla F(\alpha)$. Analytically, the
positive-definite Hessian $D^2F$ identifies the unique missing direction
of a critical encoding and forces the Jacobian determinant to cross its
critical hypersurface transversely. Consequently, in this polyhedral
setting, the log-sum-exp regularization preserves the finite collection
of supporting directions while supplying a globally smooth,
positive-definite Hessian. Thus the example retains the geometric
content of a smoothed polyhedral norm while also possessing the
analytic properties required by the fold non-degeneracy and
finite-witness framework developed here.

\subsubsection{The argument for strictly convex functions}
We first record the differential-geometric calculation in a form which applies to any strictly convex potential. We then specialize to smooth approximations of norms $\rho_P$ associated with convex polytopes $P \subset \mathbb{R}^d$.

The notation is fixed before a general version of the result is
stated. Let $F:\mathbb{R}^d\rightarrow\mathbb{R}$ be a $C^2$ function
whose Hessian matrix
$
D^2F(x)
$
is positive definite for every $x\in\mathbb{R}^d$. In particular, $F$ is strictly convex. For each $\alpha\in\mathbb{R}^d$, define the \textbf{pinned Bregman functional} by writing
$$
\varphi_\alpha^F(x)
:=
F(x)-F(\alpha)
-
\big\langle
\nabla F(\alpha),x-\alpha
\big\rangle.
$$
The strict convexity of $F$ then implies that
$
\varphi_\alpha^F(x) > 0
$
for all $x \in \mathbb{R}^d \setminus \{\alpha\}$, and $\varphi_{\alpha}^F (\alpha) = 0$. 

We then have the following fold non-degeneracy result for the pinned Bregman functional associated with $F$.

\begin{proposition}[Bregman Folds from Strictly Convex Potentials]
\label{prop:bregmanfold-esoteric}
Let
$$
\boldsymbol{\alpha}
:=
(\alpha_1,\ldots,\alpha_d)
\in(\mathbb{R}^d)^d,
$$
and set
$$
\xi_j:=\nabla F(\alpha_j),
\qquad
j=1,\ldots,d.
$$
Suppose that $\xi_1,\ldots,\xi_d$ are affinely independent. Then the family
$$
\mathsf{\Phi}_{\boldsymbol{\alpha}}^F
:=
\big\{
\varphi_{\alpha_1}^F,\ldots,\varphi_{\alpha_d}^F
\big\}
$$
is admissible on $\mathbb{R}^d$.  More precisely, the critical set of
its encoding map is
$$
\Sigma_{\boldsymbol{\alpha}}^F
=
(\nabla F)^{-1}
\big(
\Aff\{\xi_1,\ldots,\xi_d\}
\big)
$$
and, at every point $x \in \Sigma_{\boldsymbol{\alpha}}^F$, the family $\mathsf{\Phi}_{\boldsymbol{\alpha}}^F$ is fold non-degenerate.
\end{proposition}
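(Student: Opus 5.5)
The plan is to reduce everything to the pinned squared-distance computation of Proposition~\ref{prop:pinneddistanceadmissible-examples}, now carried out in the dual variable $\nabla F(x)$. First compute
$$
\nabla\varphi_{\alpha_j}^F(x)=\nabla F(x)-\xi_j ,
$$
so the rows of $D_x\mathsf{H}_{\boldsymbol{\alpha}}^F(x)$ are $(\nabla F(x)-\xi_j)^{\mathsf T}$. This is exactly the row structure of the pinned-distance derivative, up to the factor $2$, with $x$ replaced by $\nabla F(x)$ and $p_j$ replaced by $\xi_j$. Subtracting the first row from the others, as in \eqref{eq:pinneddistancerowreduction-examples}, gives
$$
J_{\boldsymbol{\alpha}}^F(x)=c_{\boldsymbol{\xi}}\,\big\langle\nabla F(x)-\xi_1,n_{\boldsymbol{\xi}}\big\rangle ,
$$
where $n_{\boldsymbol{\xi}}$ is a unit normal to $L_{\boldsymbol{\xi}}:=\vspan\{\xi_j-\xi_1\}$ and $c_{\boldsymbol{\xi}}\neq0$ by affine independence. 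Hence $\Sigma_{\boldsymbol{\alpha}}^F=(\nabla F)^{-1}(\Aff\{\xi_1,\ldots,\xi_d\})$.

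Next I verify rank stability. At a critical point, all rows lie in $L_{\boldsymbol{\xi}}$ and their differences span it, so the rank is exactly $d-1$ and the kernel is $\vspan\{n_{\boldsymbol{\xi}}\}$. This kernel is a fixed vector, independent of $x$.

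For kernel transversality, differentiate $J$. Since $F\in C^2$, the gradient $\nabla F$ is $C^1$, and
$$
D_xJ_{\boldsymbol{\alpha}}^F(x)[v]=c_{\boldsymbol{\xi}}\,\big\langle D^2F(x)v,n_{\boldsymbol{\xi}}\big\rangle .
$$
Taking $v=\lambda n_{\boldsymbol{\xi}}$ with $\lambda\neq0$ gives $c_{\boldsymbol{\xi}}\lambda\,\langle D^2F(x)n_{\boldsymbol{\xi}},n_{\boldsymbol{\xi}}\rangle\neq0$, by positive definiteness of the Hessian. This is the fold condition at every critical point.

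Admissibility then follows from Lemma~\ref{lem:foldimpliestangentialimmersion-genexample} applied with $U=\mathbb{R}^d$. That lemma shows $\Sigma_{\boldsymbol{\alpha}}^F$ is a $C^1$ embedded hypersurface with full tangential rank. Alternatively, one can note directly that $\nabla F$ is a $C^1$ map with invertible derivative, so the preimage of the hyperplane is a regular level set of $x\mapsto\langle\nabla F(x)-\xi_1,n_{\boldsymbol{\xi}}\rangle$, whose gradient $D^2F(x)n_{\boldsymbol{\xi}}$ is nonzero. If the critical set is empty, the conditions hold vacuously.

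The main subtlety is regularity. The paper's framework asks for $C^2$ maps, and $J$ involves $\nabla F$, so $J$ is only $C^1$. That is enough for the fold condition and the regular level set theorem, so I will remark that nothing beyond $C^1$ regularity of $J$ is used.
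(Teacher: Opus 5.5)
Your proposal is correct and follows essentially the same route as the paper. Both arguments row-reduce to $J_{\boldsymbol{\alpha}}^F(x)=c\,\langle\nabla F(x)-\xi_1,n\rangle$, identify the kernel at critical points as $\operatorname{span}\{n\}$, and use positivity of $\langle D^2F(x)n,n\rangle$ both for the regular-level-set structure of $\Sigma_{\boldsymbol{\alpha}}^F$ and for the fold condition; the paper additionally checks tangential immersion directly through the kernel-transversality criterion rather than only citing the fold-implies-admissibility lemma. Your closing regularity worry is moot, since $\varphi_\alpha^F$ is $C^2$ whenever $F$ is, so $J$ being only $C^1$ is the usual situation for any $C^2$ encoding.
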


\begin{proof}
Let
$
\mathsf{H}_{\boldsymbol{\alpha}}^F := \big( \varphi_{\alpha_1}^F,\ldots,\varphi_{\alpha_d}^F \big).
$
For each $j\in\{1,\ldots,d\}$, differentiation in the spatial variable gives
$$
D_x\varphi_{\alpha_j}^F(x)
=
\big(
\nabla F(x)-\xi_j
\big)^{\mathsf{T}}.
$$
We then set
$$
L_{\boldsymbol{\alpha}}
:=
\operatorname{span}
\{\xi_1-\xi_2,\ldots,\xi_1-\xi_d\}.
$$
and recall that
$
\dim L_{\boldsymbol{\alpha}}=d-1,
$
by the assumption of affine independence.
Choose any unit vector
$
n_{\boldsymbol{\alpha}}\in L_{\boldsymbol{\alpha}}^\perp
$
and define $c_{\boldsymbol{\alpha}} \in \mathbb{R}$ by setting
$$
c_{\boldsymbol{\alpha}}
:=
\det
\begin{pmatrix}
n_{\boldsymbol{\alpha}}^{\mathsf{T}}\\
(\xi_1-\xi_2)^{\mathsf{T}}\\
\vdots\\
(\xi_1-\xi_d)^{\mathsf{T}}
\end{pmatrix}.
$$
The vectors appearing as rows in this matrix form a basis of
$\mathbb{R}^d$, and hence
$
c_{\boldsymbol{\alpha}}\neq0.
$

We then use the notation above in the following calculation. Again, by elementary row operations, we observe that
\begin{align*}
J_{\boldsymbol{\alpha}}^F(x) :=
\det D_x\mathsf{H}_{\boldsymbol{\alpha}}^F(x) =
\det
\begin{pmatrix}
\big(\nabla F(x)-\xi_1\big)^{\mathsf{T}}\\
(\xi_1-\xi_2)^{\mathsf{T}}\\
\vdots\\
(\xi_1-\xi_d)^{\mathsf{T}}
\end{pmatrix}
\end{align*}
and so
\begin{equation}\label{eq:bregmandeterminant-esoteric}
J_{\boldsymbol{\alpha}}^F(x) =c_{\boldsymbol{\alpha}}
\big\langle
\nabla F(x)-\xi_1,n_{\boldsymbol{\alpha}}
\big\rangle
\end{equation}

The preceding calculation shows that
$
J_{\boldsymbol{\alpha}}^F(x)=0
$
if and only if
$$
\nabla F(x)
\in
\big( \xi_1+L_{\boldsymbol{\alpha}} \big)
:=
\Aff\{\xi_1,\ldots,\xi_d\}.
$$
This proves the asserted formula for
$
\Sigma_{\boldsymbol{\alpha}}^F.
$

We now proceed to prove the associated fold non-degeneracy at all critical points. Define
$$
g_{\boldsymbol{\alpha}}(x)
:=
\big\langle
\nabla F(x)-\xi_1,n_{\boldsymbol{\alpha}}
\big\rangle.
$$
Then, for every $v\in\mathbb{R}^d$, a direct calculation gives
$$
D_xg_{\boldsymbol{\alpha}}(x)[v]
=
\big\langle
D^2F(x)v,n_{\boldsymbol{\alpha}}
\big\rangle,
$$
and, in particular,
\begin{equation}\label{eq:bregmannormalderivative-esoteric}
D_xg_{\boldsymbol{\alpha}}(x)[n_{\boldsymbol{\alpha}}]
=
\big\langle
D^2F(x)n_{\boldsymbol{\alpha}},n_{\boldsymbol{\alpha}}
\big\rangle
>0.
\end{equation}
Consequently, zero is a regular value of $g_{\boldsymbol{\alpha}}$, which implies that the set of critical points
$
\Sigma_{\boldsymbol{\alpha}}^F = g_{\boldsymbol{\alpha}}^{-1}(0)
$
is a $C^1$ embedded hypersurface; see Subsection
\ref{subsec:hypersurfacetransversality-diffgeo}.

Hence, if we take
$x\in\Sigma_{\boldsymbol{\alpha}}^F$, then it may be assumed, after
elementary row operations, that the first row of
$
D_x\mathsf{H}_{\boldsymbol{\alpha}}^F(x)
$
belongs to $L_{\boldsymbol{\alpha}}$, while the remaining row differences
span $L_{\boldsymbol{\alpha}}$.  Therefore,
$$
\rank D_x\mathsf{H}_{\boldsymbol{\alpha}}^F(x)
=
d-1
$$
and
\begin{equation}
\ker D_x\mathsf{H}_{\boldsymbol{\alpha}}^F(x)
=
\operatorname{span}\{n_{\boldsymbol{\alpha}}\}.
\label{eq:bregmankernel-esoteric}
\end{equation}
Differentiating \eqref{eq:bregmandeterminant-esoteric} in this kernel
direction gives
$$
D_xJ_{\boldsymbol{\alpha}}^F(x)[n_{\boldsymbol{\alpha}}]
=
c_{\boldsymbol{\alpha}}
\big\langle
D^2F(x)n_{\boldsymbol{\alpha}},n_{\boldsymbol{\alpha}}
\big\rangle
\neq0,
$$
which establishes fold non-degeneracy at every
$x\in\Sigma_{\boldsymbol{\alpha}}^F$.

Even though we know that fold non-degeneracy implies the tangential-immersion condition, we still verify the tangential-immersion condition directly. By the regular-level-set identity
\eqref{eq:regularleveltangent-diffgeo}, the tangent space of the
critical hypersurface is
$$
T_x\Sigma_{\boldsymbol{\alpha}}^F
=
\left\{
v\in\mathbb{R}^d:
\big\langle
D^2F(x)v,n_{\boldsymbol{\alpha}}
\big\rangle
=0
\right\}.
$$
Equation \eqref{eq:bregmannormalderivative-esoteric} shows that
$
n_{\boldsymbol{\alpha}}\notin T_x\Sigma_{\boldsymbol{\alpha}}^F.
$
Together with \eqref{eq:bregmankernel-esoteric}, this proves the
tangential-immersion condition directly, by Proposition~\ref{prop:kerneltransversality-diffgeo}.\qedhere
\end{proof}

\subsubsection{Specialization to $C^2$ Bregman approximations of polyhedral norms}
We now apply Proposition~\ref{prop:bregmanfold-esoteric} to obtain good-witness conditions for a $C^2$ Bregman approximation to an arbitrary polyhedral norm. Let $\rho : \mathbb{R}^d \rightarrow [0,\infty)$ be any polyhedral norm on $\mathbb{R}^d$, and set
$$
P:=\{z\in\mathbb{R}^d:\rho(z)\leq1\}.
$$
Since $\rho$ is a \emph{polyhedral} norm, this set $P \subset \mathbb{R}^d$ is compact, convex, centrally symmetric, and also contains the origin in its interior. We then define the \textbf{polar polytope}
$$
P^{\circ} := \big\{\xi \in \mathbb{R}^d : \langle \xi , x\rangle \leq 1 \text{ for every } x \in P \big\}.
$$
One should think of $P^{\circ}$ as identifying all possible candidate normal vectors $\xi$ to hyperplanes supporting the polyhedron $P$. To further emphasize this geometric duality, if $\xi \in \partial P^{\circ}$, then the associated hyperplane
$$
H_{\xi} := \big\{x \in \mathbb{R}^d : \langle \xi, x \rangle = 1 \big\}
$$
satisfies
$$
H_{\xi} \cap \partial P \neq \emptyset, \qquad H_{\xi} \cap \text{int} \big(P \big) = \emptyset,
$$
so that $H_{\xi}$ intersects the boundary of $P$, without intersecting its interior.

Now, if $u_1,\ldots,u_N$ are the vertices of the polar polytope $P^\circ$, then the associated polyhedral norm $\rho$ satisfies
\begin{equation}\label{eq:polynormvertexidentity-esoteric}
\rho(z)
=
\max_{1\leq r\leq N}
\langle u_r,z\rangle, \qquad \forall z \in \mathbb{R}^d.
\end{equation}
The identity \eqref{eq:polynormvertexidentity-esoteric} is the standard identification of the Minkowski functional of $P$ with the support function of its polar body; see, for example,
\cite{KY2008,Schneider2014}.
For each $\varepsilon>0$, define the log-sum-exp regularization
\begin{equation}
F_\varepsilon(z)
:=
\varepsilon
\log
\left(
\frac{1}{N}
\sum_{r=1}^N
\exp
\left(
\frac{\langle u_r,z\rangle}{\varepsilon}
\right)
\right).
\label{eq:polyhedrallogsumexp-esoteric}
\end{equation}
A simple elementary logarithmic-sum-exponential bound  together with the identity \eqref{eq:polynormvertexidentity-esoteric} give the asymptotic estimate
\begin{equation}
\rho(z)-\varepsilon\log N
\leq
F_\varepsilon(z)
\leq
\rho(z).
\label{eq:polyhedrallogsumexpbound-esoteric}
\end{equation}
Consequently, the function $F_\varepsilon$ converges uniformly to $\rho$ as
$\varepsilon\downarrow0$.  For the log-sum-exp function as a standard
smooth convex approximation to a finite maximum, see
\cite{BV2004}. 

Thus $F_\varepsilon$ gives a globally smooth and strictly convex approximation to the polyhedral norm $\rho$. The associated Bregman functionals provide the generalized pinned family considered below. The associated geometry induced by their level surfaces is explored in detail in Proposition \ref{prop:smoothvariationbregmanspheres-esoteric} and its corresponding proof. For now, we digress, and proceed with our calculations.

For each $z\in\mathbb{R}^d$, set
$$
p_r(z)
:=
\frac{
\exp\big(\langle u_r,z\rangle/\varepsilon\big)
}{
\displaystyle
\sum_{s=1}^N
\exp\big(\langle u_s,z\rangle/\varepsilon\big)
},
\qquad
\overline{u}(z)
:=
\sum_{r=1}^N p_r(z)u_r.
$$
A direct calculation then gives that
$$
\nabla F_\varepsilon(z)
=
\overline{u}(z)
$$
and
\begin{equation}\label{eq:polyhedrallogsumexphessian-esoteric}
D^2F_\varepsilon(z)
=
\frac{1}{\varepsilon}
\sum_{r=1}^N
p_r(z)
\big(
u_r-\overline{u}(z)
\big)
\otimes
\big(
u_r-\overline{u}(z)
\big),
\end{equation}
where $u\otimes v:=uv^{\mathsf{T}}$ denotes the outer-product matrix associated with $u,v \in \mathbb{R}^d$.

Since the vertices of $P^{\circ}$ affinely span $\mathbb{R}^d$, every
nonzero $v\in\mathbb{R}^d$ satisfies
\begin{align*}
\big\langle
D^2F_\varepsilon(z)v,v
\big\rangle
=
\frac{1}{\varepsilon}
\sum_{r=1}^N
p_r(z)
\big\langle
u_r-\overline{u}(z),v
\big\rangle^2
>0.
\end{align*}
Thus $D^2F_\varepsilon(z)$ is positive definite for every $z\in\mathbb{R}^d$, and $F_{\varepsilon}$ is strictly convex.

The gradient mapping $\nabla F_\varepsilon$ is injective. This follows from the Fundamental Theorem of Calculus and the previous lower bound, since whenever $x \neq y$ one obtains
\begin{align*}
\big\langle
\nabla F_\varepsilon(x)-\nabla F_\varepsilon(y),
x-y
\big\rangle
=
\int_0^1
\big\langle
D^2F_\varepsilon\big(y+t(x-y)\big)(x-y),
x-y
\big\rangle
\,dt>0.
\end{align*}
Hence, injectivity follows from strict monotonicity of the gradient of a differentiable strictly convex function; see \cite{Rockafellar1970} for a standard reference. As its derivative is invertible, the function
$
\nabla F_\varepsilon
$
is a $C^\infty$ diffeomorphism from $\mathbb{R}^d$ onto
the open set
$$
\mathcal{A}_\varepsilon
:=
\nabla F_\varepsilon(\mathbb{R}^d)
\subseteq\operatorname{int}(P^\circ).
$$
In particular, $\mathcal{A}_\varepsilon$ contains affinely
independent $d$-tuples, and each such $d$-tuple has a unique preimage under $\nabla F_\varepsilon$.

For each $\alpha\in\mathbb{R}^d$, define
\begin{equation}
\varphi_\alpha^\varepsilon(x)
:=
F_\varepsilon(x)-F_\varepsilon(\alpha)
-
\big\langle
\nabla F_\varepsilon(\alpha),x-\alpha
\big\rangle.
\label{eq:polyhedralbregmanfunctional-esoteric}
\end{equation}
This is the Bregman functional generated by the regularized
polyhedral norm $F_\varepsilon$; see
\cite{Bregman1967,BB1997}.
Since $P^\circ$ is centrally symmetric, the vertex set
$\{u_1,\ldots,u_N\}$ is centrally symmetric.  It follows that
$F_\varepsilon$ is even,
$
F_\varepsilon(0)=0,
$
and
$
\nabla F_\varepsilon(0)=0.
$
Consequently,
$$
\varphi_0^\varepsilon(x)=F_\varepsilon(x),
$$
so the functional pinned at the origin satisfies the uniform
approximation \eqref{eq:polyhedrallogsumexpbound-esoteric}.

\begin{remark}
Although $\alpha$ plays the role of a pin for the Bregman functional, it should not generally be interpreted as translating the polytope. Rather, $\alpha$ specifies the point at which the convex potential $F_\varepsilon$ is linearized. Indeed, if
\[
\ell_\alpha(x)
:=
F_\varepsilon(\alpha)
+
\left\langle
\nabla F_\varepsilon(\alpha),x-\alpha
\right\rangle,
\]
then
\[
\varphi_\alpha^\varepsilon(x)
=
F_\varepsilon(x)-\ell_\alpha(x).
\]
Thus, the function $\varphi_\alpha^\varepsilon(x)$ measures the vertical separation between the graph of $F_\varepsilon$ at $x$ and the associated tangent hyperplane to the graph of $F_\varepsilon$ at $\alpha \in \mathbb{R}^d$. In particular,
\[
\varphi_\alpha^\varepsilon(\alpha)=0,
\qquad
D_x\varphi_\alpha^\varepsilon(\alpha)=0,
\]
and strict convexity implies that $\alpha$ is the unique point at which the functional vanishes. It is in this sense that $\alpha$ is the pin of the generalized distance functional.

The geometric role of $\alpha \in \mathbb{R}^d$ is even more apparent if we consider the associated polar dual parameter
\[
\xi_\alpha:=\nabla F_\varepsilon(\alpha)
\in\operatorname{int}(P^\circ).
\]
Indeed, all of the weights $p_r(\alpha)$ are strictly positive, and the vertices $u_1,\ldots,u_N$ affinely span $\mathbb{R}^d$, so their weighted average lies in the interior of $P^\circ$.
For the logarithmic-sum-exponential regularization of the polyhedral norm, we then have the associated dual expressions
\[
\xi_\alpha
=
\sum_{r=1}^N
p_r(\alpha)u_r,
\qquad
p_r(\alpha)
=
\frac{
e^{\langle u_r,\alpha\rangle/\varepsilon}
}{
\sum_{s=1}^N
e^{\langle u_s,\alpha\rangle/\varepsilon}
}.
\]
Consequently, $\alpha$ determines a weighted selection of the supporting covectors $u_1,\ldots,u_N$ (which are vertices of the polar polytope $P^{\circ}$) of the polyhedral norm. 

In particular, if $\alpha_0 \in \mathbb{R}^d$ is some parameter which lies deeply inside a cone on which one covector $u_k$ strongly realizes the polyhedral norm, then the associated parameter $\xi_{\alpha_0}$ will be close in distance to $u_k$. Accordingly, varying $\alpha$ changes the supporting direction selected from the polar polytope; it does not simply translate the original polytope.
\end{remark}

The following good-witness result concerns tuples
$\boldsymbol{\alpha} := (\alpha_1,\ldots,\alpha_d)$ whose associated
dual parameters are affinely independent. Its conclusions follow from
the refined $d$-curve result in Section~\ref{sec:generalizedcurveprojections-genexample} and a direct
calculation of the derivatives of the associated functions
$\varphi_{\alpha_j}^{\varepsilon}$.

\begin{corollary}[Bregman Functionals for Arbitrary Polyhedral Norms]
\label{cor:polyhedralbregmanwitness-esoteric}
Let $d\geq2$, let $\rho$ be any polyhedral norm on
$\mathbb{R}^d$, and fix $\varepsilon>0$.  Suppose that
$$
\boldsymbol{\alpha}
=
(\alpha_1,\ldots,\alpha_d)
\in(\mathbb{R}^d)^d
$$
is chosen so that
$$
\nabla F_\varepsilon(\alpha_1),
\ldots,
\nabla F_\varepsilon(\alpha_d)
$$
are affinely independent.  Then
$
\{\varphi_{\alpha_1}^\varepsilon,\ldots, \varphi_{\alpha_d}^\varepsilon\}
$
is admissible on $\mathbb{R}^d$, and every critical point of its
encoding map is fold non-degenerate.  In particular, if
$E\subseteq\mathbb{R}^d$ is a Borel $1$-rectifiable set satisfying
$\mathcal{H}^1(E)>0$, then there exists
$j\in\{1,\ldots,d\}$ such that
$$
\mathcal{H}^1
\big(
\varphi_{\alpha_j}^\varepsilon(E)
\big)
>0.
$$
\end{corollary}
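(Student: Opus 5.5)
The plan is to reduce the corollary directly to Proposition~\ref{prop:bregmanfold-esoteric} with the potential $F:=F_\varepsilon$ from \eqref{eq:polyhedrallogsumexp-esoteric}, and then to Theorem~\ref{thm:dcurvesrefined-genexample}.

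The first step is to check the hypotheses of Proposition~\ref{prop:bregmanfold-esoteric}. We need $F_\varepsilon\in C^2(\mathbb{R}^d)$ with positive definite Hessian everywhere. Smoothness is immediate, since $F_\varepsilon$ is a logarithm of a finite sum of exponentials of linear functions. Positive definiteness is the computation following \eqref{eq:polyhedrallogsumexphessian-esoteric}. The quadratic form $\langle D^2F_\varepsilon(z)v,v\rangle$ is a weighted sum of the squares $\langle u_r-\overline{u}(z),v\rangle^2$, and every weight $p_r(z)$ is strictly positive. If the form vanished for some $v\neq0$, then all $\langle u_r,v\rangle$ would equal the single number $\langle \overline{u}(z),v\rangle$. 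That would place the vertices $u_1,\ldots,u_N$ in an affine hyperplane.

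The one point I expect to need care is excluding this. The vertices of $P^\circ$ affinely span $\mathbb{R}^d$ because $P^\circ$ is a full-dimensional convex polytope: $P$ is compact with $0$ in its interior, so $P^\circ$ is compact with nonempty interior. A full-dimensional polytope is the convex hull of its vertices, so those vertices cannot lie in a hyperplane.

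The second step uses the hypothesis of the corollary, that $\xi_j:=\nabla F_\varepsilon(\alpha_j)$ are affinely independent. This is exactly the remaining hypothesis of Proposition~\ref{prop:bregmanfold-esoteric}, once we note that $\varphi_{\alpha}^\varepsilon$ in \eqref{eq:polyhedralbregmanfunctional-esoteric} coincides with $\varphi_\alpha^F$ for $F=F_\varepsilon$. That proposition then gives three things:
\begin{enumerate}
\item the critical set is the $C^1$ hypersurface $(\nabla F_\varepsilon)^{-1}(\Aff\{\xi_1,\ldots,\xi_d\})$;
\item every critical point is fold non-degenerate;
\item the family is admissible, i.e.\ it satisfies the critical-hypersurface and tangential-immersion conditions.
\end{enumerate}
Admissibility alternatively follows from fold non-degeneracy via Lemma~\ref{lem:foldimpliestangentialimmersion-genexample} applied with $U=\mathbb{R}^d$.

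Finally, Theorem~\ref{thm:dcurvesrefined-genexample} applied to this admissible family yields an index $j$ with $\mathcal{H}^1(\varphi_{\alpha_j}^\varepsilon(E))>0$ for every Borel $1$-rectifiable $E$ of positive length.

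If space permits, I would also note that such tuples exist. Since $\nabla F_\varepsilon$ is a diffeomorphism onto the open set $\mathcal{A}_\varepsilon$, one can choose affinely independent $\xi_j\in\mathcal{A}_\varepsilon$ and pull them back, so the hypothesis is not vacuous.
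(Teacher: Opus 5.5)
Your proposal is correct and follows the paper's own proof: use the log-sum-exp Hessian formula to get positive definiteness of $D^2F_\varepsilon$, apply Proposition~\ref{prop:bregmanfold-esoteric} with $F=F_\varepsilon$ to obtain admissibility and fold non-degeneracy, and then apply Theorem~\ref{thm:dcurvesrefined-genexample}. Your argument that the vertices of $P^\circ$ affinely span $\mathbb{R}^d$ (a full-dimensional polytope is the convex hull of its vertices) supplies a step the paper only asserts.
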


\begin{proof}
The Hessian calculation
\eqref{eq:polyhedrallogsumexphessian-esoteric} verifies the hypotheses of
Proposition~\ref{prop:bregmanfold-esoteric}.  The admissibility and fold
non-degeneracy therefore follow, and the positive-image conclusion follows from Theorem~\ref{thm:dcurvesrefined-genexample}.\qedhere
\end{proof}

Corollary~\ref{cor:polyhedralbregmanwitness-esoteric}
may be interpreted as a pinned-radius theorem for the associated convex Bregman geometry generated by $F_\varepsilon$. Indeed, for each $\alpha\in\mathbb{R}^d$ and $t\geq0$, define the associated Bregman
ball and Bregman sphere by
\[
\mathcal{B}_{\alpha,\varepsilon}(t)
:=
\left\{
x\in\mathbb{R}^d:
\varphi_\alpha^\varepsilon(x)\leq t
\right\}
\]
and
\[
\mathcal{S}_{\alpha,\varepsilon}(t)
:=
\left\{
x\in\mathbb{R}^d:
\varphi_\alpha^\varepsilon(x)=t
\right\} = \partial \mathcal{B}_{\alpha, \varepsilon} (t).
\]
Since $F_\varepsilon$ is smooth and strictly convex, the sets
$\mathcal{B}_{\alpha,\varepsilon}(t)$ form a nested family of convex
bodies satisfying
\[
\mathcal{B}_{\alpha,\varepsilon}(0)=\{\alpha\},
\]
and their boundaries are smooth, strictly convex hypersurfaces whenever $t>0$. However, since $F_{\varepsilon}$ is only an approximation of some polyhedral norm, these need not be centrally symmetric, nor are they generally translates of one fixed body.

For a set $E\subseteq\mathbb{R}^d$, its image under the Bregman
functional is exactly the set of radii $t\geq0$ for which the
corresponding Bregman sphere intersects $E$:
$$
\varphi_\alpha^\varepsilon(E)
:=
\left\{
t\geq0:
E\cap\mathcal{S}_{\alpha,\varepsilon}(t)\neq\varnothing
\right\}.
$$
Consequently, Corollary~\ref{cor:polyhedralbregmanwitness-esoteric} says that, given $d$ pins whose associated dual parameters
$
\xi_j := \nabla F_\varepsilon(\alpha_j)
$
with
$
j=1,\ldots,d,
$
are affinely independent, every positive-length $1$-rectifiable set determines a positive-measure set of Bregman radii from at least one of those pins.

The appearance of the dual parameters $\xi_j$ is geometrically natural. Indeed,
\[
\varphi_{\alpha_j}^\varepsilon(x)
=
F_\varepsilon(x)
-
\langle\xi_j,x\rangle
+
c_j,
\]
where
\[
c_j
=
-F_\varepsilon(\alpha_j)
+
\langle\xi_j,\alpha_j\rangle
\]
is independent of $x$. Thus the essential data associated with the pin
$\alpha_j$ are the slope $\xi_j$ of the tangent hyperplane to the graph
of $F_\varepsilon$ at $\alpha_j$. Affine independence of
$\xi_1,\ldots,\xi_d$ means that these tangent hyperplanes probe the
graph of $F_\varepsilon$ in $d$ independent dual directions.

This interpretation can also be expressed infinitesimally. If
$\tau_E(x)$ is an approximate unit tangent vector to $E$ at $x$, then
\[
D_x\varphi_{\alpha_j}^\varepsilon(x)
\big[
\tau_E(x)
\big]
=
\left\langle
\nabla F_\varepsilon(x)-\xi_j,
\tau_E(x)
\right\rangle.
\]
Therefore, the $j$-th Bregman functional detects tangential variation
in $E$ precisely where $E$ crosses the corresponding Bregman spheres,
rather than running tangentially along them. In view of the
one-dimensional Area Formula, the positive-image conclusion means
that this tangential derivative is nonzero on a positive
$\mathcal{H}^1$-measure portion of $E$ for at least one index $j$. The
critical-hypersurface and tangential-immersion conditions ensure that
a positive-length portion of $E$ cannot remain tangentially invisible
to all $d$ families, even along the critical set of their canonical
encoding map. The fold non-degeneracy established in the corollary
additionally shows that this geometric configuration persists under
small perturbations of the pins.

There is also a useful, if somewhat heuristic, interpretation as
$\varepsilon\rightarrow 0^+$. For a fixed pin $\alpha$, let
\[
I(\alpha)
:=
\left\{
r:
\langle u_r,\alpha\rangle=\rho(\alpha)
\right\}
\]
and set
\[
q_\alpha
:=
\frac{1}{|I(\alpha)|}
\sum_{r\in I(\alpha)}u_r.
\]
Then
$
q_\alpha\in\partial\rho(\alpha)
$
and
\[
\nabla F_\varepsilon(\alpha)\longrightarrow q_\alpha, \textrm{ as } \varepsilon \rightarrow 0 ^+.
\]
It follows that, locally uniformly in $x$,
\[
\varphi_\alpha^\varepsilon(x)
\longrightarrow
\rho(x)-\rho(\alpha)
-
\langle q_\alpha,x-\alpha\rangle
=
\rho(x)-\langle q_\alpha,x\rangle.
\]
The limiting functional is precisely the gap between the polyhedral norm and the associated supporting hyperplane which is selected at $\alpha$, and hence is a natural non-smooth Bregman functional associated with $\rho$.

This limiting observation does not, however, turn Corollary~\ref{cor:polyhedralbregmanwitness-esoteric}
directly into a pinned-distance theorem for the original polyhedral norm. When $\alpha\neq0$, positive homogeneity gives
\[
\rho(t\alpha)
-
\langle q_\alpha,t\alpha\rangle
=
0
\qquad
\text{for every }t\geq0.
\]
Thus the limiting functional generally vanishes along an entire cone,
rather than only at the pin. Geometrically, the smooth Bregman spheres
flatten in the supporting directions of the polytope as
$\varepsilon\downarrow0$. The case $\alpha=0$ is exceptional: central
symmetry gives $q_0=0$, and hence
\[
\varphi_0^\varepsilon(x)
=
F_\varepsilon(x)
\longrightarrow
\rho(x).
\]
Accordingly, the corollary is most naturally understood as a
positive-radius theorem for the smooth, strictly convex Bregman
geometries generated by $F_\varepsilon$, while the polyhedral limit
identifies the supporting geometry which those functionals
approximate.

We also obtain the following stability result for pinned Bregman functionals.

\begin{corollary}[Stable Neighbourhoods of Bregman Pins]
\label{cor:stablepolyhedralbregmanpins-esoteric}
Assume the hypotheses of Corollary~\ref{cor:polyhedralbregmanwitness-esoteric}.  Then there exist open
neighbourhoods
$$
\alpha_j\in U_j\subseteq\mathbb{R}^d,
\qquad
j=1,\ldots,d,
$$
such that the following holds.  For every Borel $1$-rectifiable set
$E\subseteq\mathbb{R}^d$ satisfying $\mathcal{H}^1(E)>0$, there
exists an index $j_E\in\{1,\ldots,d\}$ for which
$$
\mathcal{H}^1
\big(
\varphi_\beta^\varepsilon(E)
\big)
>0
\qquad
\text{for every }\beta\in U_{j_E}.
$$
The neighbourhoods $U_1,\ldots,U_d$ depend only upon the initial
tuple $\boldsymbol{\alpha}$, the norm $\rho$, and $\varepsilon$, while the successful index
$j_E$ may depend on $E$.
\end{corollary}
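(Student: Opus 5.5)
The plan is to reduce the statement to Corollary~\ref{cor:globalstablegoodwitnessneighbourhoods-genexample}. That corollary requires an open neighbourhood $\mathcal{V}$ of $\boldsymbol{\alpha}=(\alpha_1,\ldots,\alpha_d)$ in $(\mathbb{R}^d)^d$ such that every $\boldsymbol{\beta}\in\mathcal{V}$ gives a Bregman encoding which is fold non-degenerate at all of its critical points in $\mathbb{R}^d$. The parameter family is $(\beta,x)\mapsto\varphi^\varepsilon_\beta(x)=F_\varepsilon(x)-F_\varepsilon(\beta)-\langle\nabla F_\varepsilon(\beta),x-\beta\rangle$. Since $F_\varepsilon$ is $C^\infty$, this map is jointly $C^\infty$, hence jointly $C^2$, on $\mathbb{R}^d\times\mathbb{R}^d$. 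So the hypotheses of the corollary hold with $\mathcal{A}=\mathbb{R}^d$ once $\mathcal{V}$ is produced.

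To produce $\mathcal{V}$, I will use the map
$$
\Xi:(\beta_1,\ldots,\beta_d)\longmapsto\big(\nabla F_\varepsilon(\beta_1),\ldots,\nabla F_\varepsilon(\beta_d)\big)\in(\mathbb{R}^d)^d,
$$
which is continuous. Affine independence of $\xi_1,\ldots,\xi_d$ is equivalent to nonvanishing of the determinant of the $(d-1)\times(d-1)$ Gram matrix of the differences $\xi_1-\xi_j$, $j=2,\ldots,d$. This is an open condition on $(\mathbb{R}^d)^d$. Let $\mathcal{O}$ be the set of affinely independent tuples and set $\mathcal{V}:=\Xi^{-1}(\mathcal{O})$. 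Then $\mathcal{V}$ is open and contains $\boldsymbol{\alpha}$ by hypothesis.

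For each $\boldsymbol{\beta}\in\mathcal{V}$, the dual parameters $\nabla F_\varepsilon(\beta_j)$ are affinely independent. The Hessian computation \eqref{eq:polyhedrallogsumexphessian-esoteric} shows $D^2F_\varepsilon$ is positive definite everywhere, so Proposition~\ref{prop:bregmanfold-esoteric} applies to $\boldsymbol{\beta}$ exactly as in the proof of Corollary~\ref{cor:polyhedralbregmanwitness-esoteric}. It gives that $\mathsf{H}^{F_\varepsilon}_{\boldsymbol{\beta}}$ is fold non-degenerate at every critical point of $\mathbb{R}^d$. This is the global fold-stability hypothesis. Corollary~\ref{cor:globalstablegoodwitnessneighbourhoods-genexample} then yields open $U_j\ni\alpha_j$ with $U_1\times\cdots\times U_d\subseteq\mathcal{V}$ and the stated uniform good-witness conclusion.

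The neighbourhoods depend only on $\mathcal{V}$ and $\boldsymbol{\alpha}$, and $\mathcal{V}$ depends only on $F_\varepsilon$, which is determined by the vertex set of $P^\circ$, that is, by $\rho$ and $\varepsilon$. This gives the asserted dependence. There is no real obstacle. The only point to check is that fold non-degeneracy is global for all nearby tuples and not merely on compact sets; this is exactly what the uniform validity of Proposition~\ref{prop:bregmanfold-esoteric} provides. It is why the compact localization of Theorem~\ref{thm:stablegoodwitnessneighbourhoods-genexample}, and the escaping-critical-set phenomenon of Example~\ref{ex:globalinstabilityescapingcriticalline-genexample}, can be avoided.
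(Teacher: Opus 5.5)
Your proposal is correct and is essentially the paper's argument. The paper likewise uses the openness of affine independence and the continuity of $\nabla F_\varepsilon$ to shrink to a product $U_1\times\cdots\times U_d$ of tuples whose dual parameters are affinely independent, and then runs the bad-tuple contradiction directly against Theorem~\ref{thm:dcurvesrefined-genexample}, with admissibility supplied by Proposition~\ref{prop:bregmanfold-esoteric}; routing this through Corollary~\ref{cor:globalstablegoodwitnessneighbourhoods-genexample}, as you do, repackages the same argument with global fold non-degeneracy as the intermediate hypothesis.
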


\begin{proof}
Affine independence is an open condition.  Since
$
\nabla F_\varepsilon
$
is continuous, we may choose the neighbourhoods $U_1,\ldots,U_d$ so
that
$$
\nabla F_\varepsilon(\beta_1),
\ldots,
\nabla F_\varepsilon(\beta_d)
$$
are affinely independent whenever
$
\beta_j\in U_j
$
for every $j$.  In other words, every tuple
$$
\boldsymbol{\beta}
:=
(\beta_1,\ldots,\beta_d)
\in
U_1\times\cdots\times U_d
$$
is admissible by Proposition~\ref{prop:bregmanfold-esoteric}.

If the conclusion failed for some set $E$, then, for each
$j\in\{1,\ldots,d\}$, we could choose
$
\beta_j\in U_j
$
such that
$$
\mathcal{H}^1
\big(
\varphi_{\beta_j}^\varepsilon(E)
\big)
=0.
$$
The resulting tuple $\boldsymbol{\beta}$ would be admissible,
contradicting Theorem~\ref{thm:dcurvesrefined-genexample}.\qedhere
\end{proof}

The openness in Corollary~\ref{cor:stablepolyhedralbregmanpins-esoteric} also has the following
geometric interpretation in terms of smooth perturbations
$s\mapsto\alpha(s)$.

\begin{proposition}[Smooth Variation of Bregman Spheres]
\label{prop:smoothvariationbregmanspheres-esoteric}
Fix $\varepsilon>0$, and let $F_\varepsilon$ be as defined in \eqref{eq:polyhedrallogsumexp-esoteric}. Suppose that
$
\alpha:I\longrightarrow\mathbb{R}^d
$
is a $C^{\infty}$ deformation of pins, where
$I\subseteq\mathbb{R}$ is an open interval. For each $s\in I$ and
$t>0$, set
$$
\mathcal{S}_{s,\varepsilon}(t)
:= \mathcal{S}_{\alpha(s),\varepsilon} (t) :=
\left\{
x\in\mathbb{R}^d:
\varphi_{\alpha(s)}^\varepsilon(x)=t
\right\}.
$$
Then the following statements hold.

\begin{enumerate}
\item For every $s_0\in I$ and $t>0$, there is an open neighbourhood
$I_0\subseteq I$ of $s_0$ and a $C^\infty$ family of diffeomorphisms
\[
\Theta_s:
\mathcal{S}_{s_0,\varepsilon}(t)
\longrightarrow
\mathcal{S}_{s,\varepsilon}(t),
\qquad s\in I_0,
\]
such that $\Theta_{s_0}$ is the identity. Geometrically, this implies that the Bregman spheres vary smoothly under the perturbation $\alpha$.
\medskip
\item If $x(s)\in\mathcal{S}_{s,\varepsilon}(t)$ is any smooth path,
then its normal velocity is determined by
\begin{equation}
\left\langle
\nu_{s,\varepsilon}(x(s)),
\dot{x}(s)
\right\rangle
=
\frac{
\left\langle
D^2F_\varepsilon(\alpha(s))\dot{\alpha}(s),
x(s)-\alpha(s)
\right\rangle
}{
\left|
\nabla F_\varepsilon(x(s))
-
\nabla F_\varepsilon(\alpha(s))
\right|
},
\label{eq:bregmanspherenormalvelocity-esoteric}
\end{equation}
where
\[
\nu_{s,\varepsilon}(x)
:=
\frac{
\nabla F_\varepsilon(x)
-
\nabla F_\varepsilon(\alpha(s))
}{
\left|
\nabla F_\varepsilon(x)
-
\nabla F_\varepsilon(\alpha(s))
\right|
}
\]
is the outward unit normal.
\medskip
\item For each fixed $s$, the recentred and rescaled Bregman spheres satisfy
\[
\frac{
\mathcal{S}_{s,\varepsilon}(t)-\alpha(s)
}{
\sqrt{2t}
}
\longrightarrow
\underbrace{\left\{
z\in\mathbb{R}^d:
\left\langle
D^2F_\varepsilon(\alpha(s))z,z
\right\rangle=1
\right\}}_{\mathcal{E}_{s,\varepsilon}}, \text{ as } t \rightarrow 0^+.
\]
 The convergence is $C^\infty$ as a convergence of
embedded hypersurfaces and is locally uniform in $s$. The shape of the
limiting ellipsoid depends smoothly on $s$ through $\alpha(s)$.
\end{enumerate}
\end{proposition}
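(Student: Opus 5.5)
The plan is to treat all three statements through the single defining function
\[
G(s,x):=\varphi^\varepsilon_{\alpha(s)}(x)-t
=F_\varepsilon(x)-F_\varepsilon(\alpha(s))-\big\langle\nabla F_\varepsilon(\alpha(s)),x-\alpha(s)\big\rangle-t .
\]
This function is jointly $C^\infty$ on $I\times\mathbb{R}^d$, since $F_\varepsilon$ is smooth and $\alpha$ is $C^\infty$. Its spatial gradient is $\nabla_xG=\nabla F_\varepsilon(x)-\nabla F_\varepsilon(\alpha(s))$. This gradient vanishes only at $x=\alpha(s)$, by injectivity of $\nabla F_\varepsilon$ (established above from positive definiteness of \eqref{eq:polyhedrallogsumexphessian-esoteric}). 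Since $G(s,\alpha(s))=-t<0$, the sphere $\mathcal{S}_{s,\varepsilon}(t)$ is a regular level set, hence a smooth embedded hypersurface. It is compact, because $F_\varepsilon(x)\geq\rho(x)-\varepsilon\log N$ grows linearly while the subtracted tangent plane has slope $\xi_{\alpha}\in\operatorname{int}(P^\circ)$. Hence $\rho(x)-\langle\xi_\alpha,x\rangle\geq c|x|$, and the sublevel sets are bounded. This also gives the formula for the outward normal $\nu_{s,\varepsilon}$.

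For (1), I would construct $\Theta_s$ by flowing along a normal-type vector field. Near the compact set $\{s_0\}\times\mathcal{S}_{s_0,\varepsilon}(t)$ we have $|\nabla_xG|\geq c>0$, and this persists for $s$ in a small interval $I_0$ by joint continuity and compactness. On this region define
\[
X(s,x):=-\partial_sG(s,x)\,\frac{\nabla_xG(s,x)}{|\nabla_xG(s,x)|^2},
\]
and integrate the non-autonomous ODE $\dot x=X(s,x)$ with $x(s_0)=x_0\in\mathcal{S}_{s_0,\varepsilon}(t)$. Along solutions $\frac{d}{ds}G(s,x(s))=\partial_sG+\langle\nabla_xG,X\rangle=0$, so the flow preserves the level sets. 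Smooth dependence of solutions on initial data and time, together with compactness (which gives a uniform existence time after shrinking $I_0$), makes $\Theta_s$ a smooth family. Each $\Theta_s$ is a diffeomorphism because the reverse flow provides its inverse.

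For (2), differentiate $G(s,x(s))=0$. We have
\[
\partial_sG=-\langle\nabla F_\varepsilon(\alpha),\dot\alpha\rangle-\langle D^2F_\varepsilon(\alpha)\dot\alpha,x-\alpha\rangle+\langle\nabla F_\varepsilon(\alpha),\dot\alpha\rangle=-\langle D^2F_\varepsilon(\alpha)\dot\alpha,x-\alpha\rangle .
\]
Therefore $\langle\nabla_xG,\dot x\rangle=\langle D^2F_\varepsilon(\alpha)\dot\alpha,x-\alpha\rangle$, and dividing by $|\nabla_xG|$ gives \eqref{eq:bregmanspherenormalvelocity-esoteric} directly.

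For (3), which I expect to be the main obstacle because the convergence must be $C^\infty$ and locally uniform in $s$, rescale by setting $x=\alpha(s)+\sqrt{2t}\,z$ and $h:=\sqrt{2t}$. Taylor's theorem with integral remainder gives
\[
\frac{1}{t}\varphi^\varepsilon_{\alpha(s)}(\alpha(s)+hz)=2\int_0^1(1-\tau)\big\langle D^2F_\varepsilon(\alpha(s)+\tau hz)z,z\big\rangle\,d\tau=:Q(s,h,z).
\]
This $Q$ is smooth in $(s,h,z)$ including $h=0$, where $Q(s,0,z)=\langle D^2F_\varepsilon(\alpha(s))z,z\rangle$. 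The rescaled sphere is then the level set $\{Q(s,h,\cdot)=2\}$ in the paper's normalization. Since $Q(s,0,\cdot)$ is homogeneous quadratic, the normalization constant should be matched so that the limit is $\mathcal{E}_{s,\varepsilon}$. I would carefully reconcile the factor $\sqrt{2t}$ against the Taylor factor $\tfrac12$, adjusting the constant in the level if needed; this bookkeeping is the main place I expect trouble.

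At $h=0$ the level set is an ellipsoid, and $\nabla_zQ(s,0,z)=2D^2F_\varepsilon(\alpha(s))z\neq0$ on it. So $1$ (or $2$) is a regular value uniformly for $(s,h)$ near $(s_1,0)$. Applying the implicit function theorem with parameters $(s,h)$, for example writing the level set radially as $z=r(s,h,\omega)\omega$ with $\omega\in\mathbb{S}^{d-1}$ (valid since $\partial_rQ>0$ near the ellipsoid), gives a smooth radial graph function. Its derivatives in $\omega$ converge as $h\to0$ uniformly on compact $s$-sets. This yields $C^\infty$ convergence, locally uniform in $s$, with the limit depending smoothly on $s$ through $D^2F_\varepsilon(\alpha(s))$.

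One point still needs care: the rescaled set must not have additional components far away. This is handled by the compactness and convexity of the Bregman balls, which are star-shaped about $\alpha(s)$.
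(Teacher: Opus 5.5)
Your proposal is correct and follows the paper's proof essentially step for step: the same level-set function $G$, the same level-preserving field $-\partial_sG\,\nabla_xG/|\nabla_xG|^2$ made uniform by the coercivity coming from $\xi_{\alpha(s)}\in\operatorname{int}(P^\circ)$, the same implicit-differentiation computation for (2), and the same Taylor-remainder function $Q$ with $h=\sqrt{2t}$ for (3). The normalization you worried about needs no adjustment: with your own $Q:=t^{-1}\varphi^\varepsilon_{\alpha(s)}(\alpha(s)+hz)$, the rescaled sphere is exactly $\{Q(s,h,\cdot)=1\}$ (not $2$), and $Q(s,0,z)=\langle D^2F_\varepsilon(\alpha(s))z,z\rangle$ because $2\int_0^1(1-\tau)\,d\tau=1$, so the limit is $\mathcal{E}_{s,\varepsilon}$ directly; your star-shapedness argument ruling out extra components of the rescaled sphere addresses a point the paper leaves implicit.
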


\begin{proof}
For notational convenience, write
$
\alpha_s:=\alpha(s)
$
and define, for each $t>0$, the quantity
$$
G_t(s,x)
:=
\varphi_{\alpha_s}^\varepsilon(x)-t.
$$
Differentiation in the spatial variable gives
\begin{equation}
\nabla_xG_t(s,x)
=
\nabla F_\varepsilon(x)
-
\nabla F_\varepsilon(\alpha_s).
\label{eq:bregmanspherespatialgradient-esoteric}
\end{equation}
Since $D^2F_\varepsilon$ is positive definite everywhere, the gradient mapping $\nabla F_\varepsilon$ is injective. Moreover, $\varphi_{\alpha_s}^\varepsilon$ is non-negative and vanishes only at $\alpha_s$. Therefore, if
$
G_t(s,x)=0
$
and $t>0$, then $x\neq\alpha_s$, and hence
$
\nabla_xG_t(s,x)\neq0.
$
The Implicit Function Theorem consequently shows that
$$
\mathcal{M}_t
:=
\left\{
(s,x)\in I\times\mathbb{R}^d:
G_t(s,x)=0\right\}
=
\left\{(s,x) \in I \times \mathbb{R}^d : \varphi_{\alpha_s}^{\varepsilon} (x) = t
\right\}
$$
is a $C^\infty$ embedded hypersurface in $I\times\mathbb{R}^d \subseteq \mathbb{R}\times \mathbb{R}^d$.

We next compute the variation with respect to the pin $\alpha(s)$. If
$h\in\mathbb{R}^d$, then
\begin{align*}
D_\alpha\varphi_\alpha^\varepsilon(x)[h]
&=
-
\left\langle
\nabla F_\varepsilon(\alpha),h
\right\rangle
-
\left\langle
D^2F_\varepsilon(\alpha)h,x-\alpha
\right\rangle
+
\left\langle
\nabla F_\varepsilon(\alpha),h
\right\rangle\\[1ex]
&=
-
\left\langle
D^2F_\varepsilon(\alpha)h,x-\alpha
\right\rangle.
\end{align*}
It follows that
\begin{equation}\label{eq:bregmansphereparameterderivative-esoteric}
\partial_sG_t(s,x)
=
-
\left\langle
D^2F_\varepsilon(\alpha_s)\dot{\alpha}(s),
x-\alpha_s
\right\rangle.
\end{equation}
Letting
$$
A(s,x)
:=
\left\langle
D^2F_\varepsilon(\alpha_s)\dot{\alpha}(s),
x-\alpha_s
\right\rangle
$$
and defining, along the hypersurface $\mathcal{M}_t$, the vector
$$
V(s,x)
:=
\frac{
A(s,x)
}{
\left|
\nabla F_\varepsilon(x)
-
\nabla F_\varepsilon(\alpha_s)
\right|^2
}
\left(
\nabla F_\varepsilon(x)
-
\nabla F_\varepsilon(\alpha_s)
\right),
$$
the expressions
\eqref{eq:bregmanspherespatialgradient-esoteric}
and
\eqref{eq:bregmansphereparameterderivative-esoteric}
thus give
$$
\partial_sG_t(s,x)
+
\left\langle
\nabla_xG_t(s,x),V(s,x)
\right\rangle
=
-A(s,x)+A(s,x)
=
0.
$$
The vector field
$$
W(s,x):=(1,V(s,x))
$$
is therefore tangent to $\mathcal{M}_t$. Its integral curves advance
the parameter $s$ at unit speed and carry one level hypersurface to
the nearby level hypersurfaces.

To justify this flow uniformly near a fixed $s_0$, observe that
$$
\xi_s
:=
\nabla F_\varepsilon(\alpha_s)
\in\operatorname{int}(P^\circ).
$$
After restricting to a sufficiently small neighbourhood of $s_0$, the vectors $\xi_s$ lie in a compact subset of
$\operatorname{int}(P^\circ)$. Hence there is a constant $c>0$ such that
$$
\rho(x)-\langle\xi_s,x\rangle
\geq c|x|
$$
for every $x\in\mathbb{R}^d$ and every such $s$. Since
$$
F_\varepsilon(x)
\geq
\rho(x)-\varepsilon\log N,
$$
the functions
$\varphi_{\alpha_s}^\varepsilon(x)$ tend to infinity as
$|x|\to\infty$, uniformly for $s$ near $s_0$. The corresponding level hypersurfaces therefore lie in one common compact set. On that compact
set, the denominator in the definition of $V$ is bounded away from
zero along $\mathcal{M}_t$. Thus $W$ is smooth on the relevant compact
portion of $\mathcal{M}_t$. The standard local existence and uniqueness
theorem for smooth ordinary differential equations (see
\cite[Theorem 9.48]{Lee2013}) now gives a local flow on
$\mathcal{M}_t$. Since its $s$-component is identically one, we are guaranteed that this flow
produces the family of diffeomorphisms $\Theta_s$. This proves part
(1).

If $x(s)$ is any smooth path with
$
G_t(s,x(s))=0,
$
then differentiation gives
$$
\partial_sG_t(s,x(s))
+
\left\langle
\nabla_xG_t(s,x(s)),\dot{x}(s)
\right\rangle = 0.
$$
Substituting
\eqref{eq:bregmanspherespatialgradient-esoteric}
and
\eqref{eq:bregmansphereparameterderivative-esoteric},
and then dividing by
$$
\left|
\nabla F_\varepsilon(x(s))
-
\nabla F_\varepsilon(\alpha_s)
\right|,
$$
gives
\eqref{eq:bregmanspherenormalvelocity-esoteric}.
The tangential component of $\dot{x}(s)$ only changes the
parametrization of the sphere. Part (2) of Proposition~\ref{prop:smoothvariationbregmanspheres-esoteric} is thus verified.

It remains to verify part (3). Put $r:=\sqrt{2t}$ and define
$$
Q(s,r,z)
:=
2\int_0^1(1-u)
\left\langle
D^2F_\varepsilon(\alpha_s+urz)z,z
\right\rangle,du.
$$
Linearization thus gives
$$
Q(s,r,z)
=
\frac{
\varphi_{\alpha_s}^\varepsilon(\alpha_s+rz)
}{t}
$$
when $r>0$, while
$$
Q(s,0,z)
=
\left\langle
D^2F_\varepsilon(\alpha_s)z,z
\right\rangle.
$$
In particular, $Q$ is smooth in $(s,r,z)$ down to $r=0$. Since
$D^2F_\varepsilon(\alpha_s)$ is positive definite, the level set
$Q(s,r,z)=1$ is a smooth normal graph over
$$
\left\{
z:
\left\langle
D^2F_\varepsilon(\alpha_s)z,z
\right\rangle=1
\right\}
$$
for all sufficiently small $r$, by the Implicit Function Theorem. The
smooth dependence of $Q$ proves the asserted $C^\infty$ convergence,
locally uniformly in $s$.\qedhere
\end{proof}

There is also a useful description of the exceptional parameters.
It is most transparent in the dual variable
$
\beta=\nabla F_\varepsilon(\alpha).
$
For $\beta\in\mathcal{A}_\varepsilon$, define
$$
\psi_\beta^\varepsilon(x)
:=
F_\varepsilon(x)-\langle \beta,x\rangle.
$$
If $\beta=\nabla F_\varepsilon(\alpha)$, then
\begin{equation}
\varphi_\alpha^\varepsilon(x)
=
\psi_\beta^\varepsilon(x)
-
F_\varepsilon(\alpha)
+
\langle \beta,\alpha\rangle.
\label{eq:bregmandualparameter-esoteric}
\end{equation}
The final two terms are independent of $x$, and therefore do not
affect the $\mathcal{H}^1$ measure of the image of a set.

We now state our estimate for the exceptional set of bad witness parameters for the polyhedral Bregman functionals.

\begin{corollary}[Exceptional Parameters for Polyhedral Bregman
Functionals]
\label{cor:polyhedralbregmanexceptionalset-esoteric}
Let $E\subseteq\mathbb{R}^d$ be a Borel $1$-rectifiable set satisfying
$\mathcal{H}^1(E)>0$.  Define
$$
\mathcal{B}_{E,\varepsilon}^{*}
:=
\left\{
\beta\in\mathcal{A}_\varepsilon:
\mathcal{H}^1\big(\psi_\beta^\varepsilon(E)\big)=0
\right\}.
$$
Then there exists an affine subspace
$
V_{E,\varepsilon}\subseteq\mathbb{R}^d
$
such that
$$
\mathcal{B}_{E,\varepsilon}^{*}
\subseteq
V_{E,\varepsilon}
\qquad
\text{and}
\qquad
\dim V_{E,\varepsilon}\leq d-2.
$$
Equivalently, if
$$
\mathcal{B}_{E,\varepsilon}
:=
\left\{
\alpha\in\mathbb{R}^d:
\mathcal{H}^1\big(\varphi_\alpha^\varepsilon(E)\big)=0
\right\},
$$
then
$$
\mathcal{B}_{E,\varepsilon}
\subseteq \big\{y \in \mathbb{R}^d : \nabla F_{\varepsilon} (y) \in V_{E,\varepsilon}\big\}.
$$
The set on the right-hand side is either empty, or else is a smooth embedded submanifold of dimension at most $d-2$.
\end{corollary}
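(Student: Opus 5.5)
The plan is to reparametrize the Bregman family by the dual variable $\beta=\nabla F_\varepsilon(\alpha)$ and then apply Proposition~\ref{prop:parameterizedexceptionalset-genexample}(2) on the open parameter set $\mathcal{A}_\varepsilon\subseteq\mathbb{R}^d$. Set $\psi_\beta:=\psi_\beta^\varepsilon$ for $\beta\in\mathcal{A}_\varepsilon$. By \eqref{eq:bregmandualparameter-esoteric}, whenever $\beta=\nabla F_\varepsilon(\alpha)$ the maps $\psi_\beta$ and $\varphi_\alpha^\varepsilon$ differ by an additive constant. Images under the two maps are therefore translates, and they have equal $\mathcal{H}^1$ measure.

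The first step is to show that every affinely independent tuple $(\beta_1,\ldots,\beta_d)\in\mathcal{A}_\varepsilon^d$ is admissible for the family $\{\psi_\beta\}$. Since $\nabla F_\varepsilon$ is a diffeomorphism onto $\mathcal{A}_\varepsilon$, we can write $\beta_j=\nabla F_\varepsilon(\alpha_j)$ for unique $\alpha_j$. Corollary~\ref{cor:polyhedralbregmanwitness-esoteric}, equivalently Proposition~\ref{prop:bregmanfold-esoteric}, then gives admissibility of $\{\varphi_{\alpha_j}^\varepsilon\}$. Admissibility depends only on the spatial differentials, which coincide because $D_x\psi_{\beta_j}=D_x\varphi^\varepsilon_{\alpha_j}=(\nabla F_\varepsilon(x)-\beta_j)^{\mathsf T}$. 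Hence the encodings share their critical sets and tangential ranks.

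With this in hand, Proposition~\ref{prop:parameterizedexceptionalset-genexample}(2) applies with $m=d$ and $\mathcal{A}=\mathcal{A}_\varepsilon$. Its proof only picks affinely independent points inside $\mathcal{B}^*_{E,\varepsilon}\subseteq\mathcal{A}_\varepsilon$, so the fact that the maps are defined only for $\beta\in\mathcal{A}_\varepsilon$ causes no trouble. This yields $V_{E,\varepsilon}$ with $\dim V_{E,\varepsilon}\le d-2$ and $\mathcal{B}^*_{E,\varepsilon}\subseteq V_{E,\varepsilon}$. For the equivalent formulation, suppose $\alpha\in\mathcal{B}_{E,\varepsilon}$. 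The translation identity shows that $\nabla F_\varepsilon(\alpha)\in\mathcal{B}^*_{E,\varepsilon}\subseteq V_{E,\varepsilon}$, which gives the asserted inclusion.

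The remaining step, which I expect to be the only one needing care, is the geometric description of $(\nabla F_\varepsilon)^{-1}(V_{E,\varepsilon})$. The plan is to use that $\nabla F_\varepsilon:\mathbb{R}^d\to\mathcal{A}_\varepsilon$ is a $C^\infty$ diffeomorphism. The set $V_{E,\varepsilon}\cap\mathcal{A}_\varepsilon$ is either empty or an open subset of an affine subspace, and so is a smooth embedded submanifold of $\mathcal{A}_\varepsilon$ of dimension at most $d-2$. Its preimage under a diffeomorphism is then empty or an embedded submanifold of the same dimension. The subtlety is that the dimension bound may be strict, for instance when $\mathcal{B}^*_{E,\varepsilon}$ is empty and $V_{E,\varepsilon}$ is a point outside $\mathcal{A}_\varepsilon$. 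This is consistent with the ``at most $d-2$'' formulation, where an open piece of an affine subspace of dimension $k\le d-2$ counts as a $k$-manifold.
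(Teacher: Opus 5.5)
Your proposal is correct and follows the paper's proof essentially step by step. You pass to the dual parameter $\beta=\nabla F_\varepsilon(\alpha)$, use the pointwise equality $D_x\psi^\varepsilon_{\beta_j}=D_x\varphi^\varepsilon_{\alpha_j}$ with Proposition~\ref{prop:bregmanfold-esoteric} to get admissibility of every affinely independent tuple in $\mathcal{A}_\varepsilon$, invoke Proposition~\ref{prop:parameterizedexceptionalset-genexample}(2), transfer back via \eqref{eq:bregmandualparameter-esoteric}, and pull $V_{E,\varepsilon}\cap\mathcal{A}_\varepsilon$ back under the diffeomorphism $\nabla F_\varepsilon$. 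Your closing remark about strict dimension is unnecessary: when $V_{E,\varepsilon}\cap\mathcal{A}_\varepsilon$ is nonempty it is an open piece of $V_{E,\varepsilon}$ of dimension exactly $\dim V_{E,\varepsilon}\le d-2$, and the point-outside-$\mathcal{A}_\varepsilon$ scenario is simply the empty case.
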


\begin{proof}
If
$
\boldsymbol{\beta} := (\beta_1,\ldots,\beta_d) \in\mathcal{A}_\varepsilon^d
$
is affinely independent, choose
$
\alpha_j=(\nabla F_\varepsilon)^{-1}(\beta_j).
$
The derivatives
$
D_x\psi_{\beta_j}^\varepsilon
$
and
$
D_x\varphi_{\alpha_j}^\varepsilon
$
then agree pointwise, and Proposition~\ref{prop:bregmanfold-esoteric} therefore shows that every affinely independent $d$-tuple of parameters in $\mathcal{A}_\varepsilon$ is admissible.  Proposition~\ref{prop:parameterizedexceptionalset-genexample} then provides the affine subspace $V_{E,\varepsilon}$.

Equation \eqref{eq:bregmandualparameter-esoteric} and the injectivity of
$\nabla F_\varepsilon$ show that
$
\nabla F_\varepsilon \big( \mathcal{B}_{E,\varepsilon} \big) = \mathcal{B}_{E,\varepsilon}^{*},
$
and the asserted containment thus follows.  Finally,
$
\nabla F_\varepsilon
$
is a diffeomorphism onto the open set
$
\mathcal{A}_\varepsilon.
$
The inverse image of
$
V_{E,\varepsilon}\cap\mathcal{A}_\varepsilon
$
is therefore either empty or a smooth embedded submanifold of the same
dimension as this intersection, which is at most $d-2$.\qedhere
\end{proof}

\begin{remark}
The functionals
$
\varphi_\alpha^\varepsilon
$
are distance-like in that they are non-negative and vanish precisely
at the pin $\alpha$.  In general, however, a Bregman functional is not
symmetric in $x$ and $\alpha$ and does not satisfy the triangle
inequality.  Thus the construction above is not a theorem about
literal distances generated by the smoothed norm.  Its advantage is
that it works uniformly for every polyhedral norm and makes the fold
calculation depend only upon the positive-definiteness of the Hessian.

More generally, Proposition~\ref{prop:bregmanfold-esoteric} applies to any
$C^2$ regularization of a polyhedral norm whose Hessian is positive
definite everywhere.  The latter qualification is necessary for this
proof.  For example, the usual standard convolution with a compactly supported mollifier may leave the regularized function affine on portions of the unbounded polyhedral chambers, so that its Hessian still vanishes
there.  The log-sum-exp regularization was chosen because
\eqref{eq:polyhedrallogsumexphessian-esoteric} verifies the required positive-definiteness globally and explicitly, and therefore introduces no flatness or additional singularity to the argument.

The Hessian of $F_\varepsilon$ need not admit a uniform positive lower bound on all of $\mathbb{R}^d$.  This causes no difficulty for the pointwise fold calculation above.  On every fixed compact set, continuity and positive-definiteness give a positive lower bound for the least eigenvalue, which is the form needed for compactly localized stability arguments.
\end{remark}

\subsection{Fold non-degeneracy loci with arbitrary prescribed \texorpdfstring{$C^2$}{C2} geometry}

The final example departs from the distance-like functionals inspired
by \cite{IL2005,KL2006}. It shows that fold non-degeneracy can hold
when the critical hypersurface is an arbitrary $C^2$ graph over
$\mathbb{R}^{d-1}$.

Let
$
g:\mathbb{R}^{d-1}\rightarrow\mathbb{R}
$
be a fixed $C^2$ function, and decompose $\mathbb{R}^d$ as
$$
x :=(x',t)\in\mathbb{R}^{d-1}\times\mathbb{R}.
$$
We then set
$
u(x',t):=t-g(x'),
$
and, for each parameter $\alpha \in\mathbb{R}^{d-1}$, define
$$
\varphi_{\alpha}^g(x',t)
:= \langle\alpha,x'\rangle
+
u(x',t)^2.
$$
For every affinely independent parameter tuple, the encoding has the
graph of $g$ as its critical set and is fold non-degenerate there.
Thus the singular geometry may be prescribed without losing fold
non-degeneracy.

\begin{proposition}[A Prescribed Fold Hypersurface]
\label{prop:prescribedfoldhypersurface-esoteric}
Let
$
\boldsymbol{\alpha} := (\alpha_1,\ldots,\alpha_d) \in \big(\mathbb{R}^{d-1}\big)^d
$
be an affinely independent parameter tuple, and let
$$
\mathsf{H}_{\boldsymbol{\alpha}}^g
:=
\big(
\varphi_{\alpha_1}^g,\ldots,\varphi_{\alpha_d}^g
\big).
$$
Then
$$
\Sigma_{\boldsymbol{\alpha}}^g
=
\big\{
(x',t)\in\mathbb{R}^d:
t=g(x')
\big\}.
$$
Every point of $\Sigma_{\boldsymbol{\alpha}}^g$ is fold non-degenerate. Consequently, the family
$$
\mathsf{\Phi}_{\boldsymbol{\alpha}}^g
:=
\big\{
\varphi_{\alpha_1}^g,\ldots,\varphi_{\alpha_d}^g
\big\}
$$
is admissible.
\end{proposition}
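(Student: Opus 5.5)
The plan is to compute the Jacobian explicitly, exactly as in the proofs of Proposition~\ref{prop:pinneddistanceadmissible-examples} and Proposition~\ref{prop:bregmanfold-esoteric}, and then invoke Lemma~\ref{lem:foldimpliestangentialimmersion-genexample} for admissibility. Differentiating in $x=(x',t)$ gives
$$
D_x\varphi_{\alpha_j}^g(x',t)=\big(\alpha_j^{\mathsf T}-2u\,\nabla g(x')^{\mathsf T},\;2u\big),
$$
with $u=u(x',t)$. Subtracting the first row from the others leaves rows $\big((\alpha_j-\alpha_1)^{\mathsf T},0\big)$ for $j=2,\ldots,d$. Next, add $u\nabla g(x')^{\mathsf T}$ times the last column to the first $d-1$ columns; this column operation has determinant one. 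It turns the first row into $(\alpha_1^{\mathsf T},2u)$ and leaves the difference rows unchanged. Expanding along the last column then gives
$$
J_{\boldsymbol\alpha}^g(x',t)=\pm 2u(x',t)\,c_{\boldsymbol\alpha},
\qquad
c_{\boldsymbol\alpha}:=\det\big(\alpha_2-\alpha_1,\ldots,\alpha_d-\alpha_1\big).
$$
Here $c_{\boldsymbol\alpha}\neq0$ because $d$ points in $\mathbb{R}^{d-1}$ are affinely independent precisely when these $d-1$ differences form a basis. Hence $\Sigma^g_{\boldsymbol\alpha}=\{u=0\}$, which is the graph of $g$.

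For fold non-degeneracy, fix a point with $u=0$. There every row is $(\alpha_j^{\mathsf T},0)$, so the row space is $\mathbb{R}^{d-1}\times\{0\}$ by affine independence. Thus the rank is exactly $d-1$ and $\ker D_x\mathsf H^g_{\boldsymbol\alpha}=\vspan\{e_d\}$, which verifies \eqref{eq:foldrankstability-genexample}. Since $J^g_{\boldsymbol\alpha}=\pm2c_{\boldsymbol\alpha}u$, we get $D_xJ^g_{\boldsymbol\alpha}[e_d]=\pm2c_{\boldsymbol\alpha}\partial_t u=\pm2c_{\boldsymbol\alpha}\neq0$, which verifies \eqref{eq:foldkerneltransversality-genexample}.

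Lemma~\ref{lem:foldimpliestangentialimmersion-genexample}, applied with $U=\mathbb{R}^d$, then yields the critical-hypersurface and tangential-immersion conditions, so the family is admissible. As a direct check, $T\Sigma$ is spanned by the vectors $(w,\langle\nabla g,w\rangle)$, and none of them lies in $\vspan\{e_d\}$. Moreover, $\Sigma$ is a $C^2$ graph, hence an embedded hypersurface.

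The only delicate step is the determinant computation. The $\nabla g$ terms must be removed by an honest column operation rather than dropped, since off the graph they are nonzero. After that, everything is routine. One regularity point also needs care: $\varphi^g_\alpha$ is only $C^2$ when $g$ is $C^2$, so $J$ is merely $C^1$. That is exactly the regularity the fold definition and Lemma~\ref{lem:foldimpliestangentialimmersion-genexample} require, so no extra smoothness of $g$ is needed.
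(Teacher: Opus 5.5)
Your proof is correct and is essentially the paper's argument. The paper writes $\mathsf{H}^g_{\boldsymbol\alpha}=\mathsf{C}_{\boldsymbol\alpha}(x',u^2)$, where $\mathsf{C}_{\boldsymbol\alpha}$ is the invertible matrix with rows $(\alpha_j^{\mathsf T},1)$. This makes $D_x\mathsf{H}^g_{\boldsymbol\alpha}$ equal to $\mathsf{C}_{\boldsymbol\alpha}$ times a block lower-triangular matrix, so it reads off $J=2\det(\mathsf{C}_{\boldsymbol\alpha})\,u$, kernel $\operatorname{span}\{e_d\}$ on the graph, and $D_xJ[e_d]\neq0$, which is exactly what you obtain by row reduction.

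There is one small slip. The column operation should add $\nabla g(x')^{\mathsf T}$ times the last column, not $u\nabla g(x')^{\mathsf T}$. The operation is unnecessary anyway: after your row reduction the last column is $(2u,0,\ldots,0)^{\mathsf T}$, and cofactor expansion along it discards the only row containing $\nabla g$.
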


\begin{proof}
For this $d$-tuple $\boldsymbol{\alpha} \in (\mathbb{R}^{d-1})^{d}$, define the associated $d\times d$ matrix
$$
\mathsf{C}_{\boldsymbol{\alpha}}
:=
\begin{pmatrix}
\alpha_1^{\mathsf{T}}&1 \\
\alpha_2^{\mathsf{T}}&1 \\
\vdots&\vdots \\
\alpha_d^{\mathsf{T}}&1
\end{pmatrix}.
$$
The affine independence of $\alpha_1,\ldots,\alpha_d$ is equivalent to
$
\det\mathsf{C}_{\boldsymbol{\alpha}}\neq0.
$
Moreover, the construction of $\mathsf{C}_{\boldsymbol{\alpha}}$ gives
$$
\mathsf{H}_{\boldsymbol{\alpha}}^g(x',t)
=
\mathsf{C}_{\boldsymbol{\alpha}}
\begin{pmatrix}
x'\\
u(x',t)^2
\end{pmatrix}.
$$
Therefore, a direct calculation of the derivative gives
\begin{equation}\label{eq:prescribedfoldderivative-esoteric}
D_x\mathsf{H}_{\boldsymbol{\alpha}}^g(x',t)
=
\mathsf{C}_{\boldsymbol{\alpha}}
\begin{pmatrix}
I_{d-1}&0\\
-2u(x',t)\nabla g(x')^{\mathsf{T}}&2u(x',t)
\end{pmatrix}.
\end{equation}
Taking determinants in \eqref{eq:prescribedfoldderivative-esoteric} gives
\begin{equation}\label{eq:prescribedfolddeterminant-esoteric}
\det D_x\mathsf{H}_{\boldsymbol{\alpha}}^g(x',t)
=
2\det(\mathsf{C}_{\boldsymbol{\alpha}})
\big(t-g(x')\big).
\end{equation}
It follows immediately that the critical set is precisely the prescribed graph
$$
\Sigma_{\boldsymbol{\alpha}}^g
=
\{(x',t):t=g(x')\}.
$$

At a point of this graph, the second matrix on the right-hand side of
\eqref{eq:prescribedfoldderivative-esoteric} has rank $d-1$ and kernel
$
\operatorname{span}\{e_d\}.
$
Since $\mathsf{C}_{\boldsymbol{\alpha}}$ is invertible, we obtain
$$
\rank D_x\mathsf{H}_{\boldsymbol{\alpha}}^g(x',g(x'))=d-1
$$
and
$$
\ker D_x\mathsf{H}_{\boldsymbol{\alpha}}^g(x',g(x'))
=
\operatorname{span}\{e_d\}.
$$
Finally, \eqref{eq:prescribedfolddeterminant-esoteric} gives
$$
D_x\big(
\det D_x\mathsf{H}_{\boldsymbol{\alpha}}^g
\big)(x',g(x'))[e_d]
=
2\det(\mathsf{C}_{\boldsymbol{\alpha}})
\neq0.
$$
Thus every critical point is fold non-degenerate. By Proposition~\ref{prop:kerneltransversality-diffgeo}, the same calculation shows
directly that the kernel direction is transverse to the prescribed
critical graph.\qedhere
\end{proof}

The results of Section~\ref{sec:generalizedcurveprojections-genexample}
give the following good-witness theorem for the generalized curve
projections $\varphi_{\alpha}^g$ associated with arbitrary
$C^2$-graphs.

\begin{corollary}[Exceptional Parameters for a Prescribed Fold]
\label{cor:prescribedfoldexceptionalset-esoteric}
Let $E\subseteq\mathbb{R}^d$ be a Borel $1$-rectifiable set satisfying
$\mathcal{H}^1(E)>0$, and define
$$
\mathcal{B}_E^g
:=
\left\{
\alpha\in\mathbb{R}^{d-1}:
\mathcal{H}^1
\big(
\varphi_\alpha^g(E)
\big)
=0
\right\}.
$$
Then $\mathcal{B}_E^g$ is contained in an affine subspace of
$\mathbb{R}^{d-1}$ of dimension at most $d-2$.
\end{corollary}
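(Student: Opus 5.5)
The plan is to deduce this from Proposition~\ref{prop:parameterizedexceptionalset-genexample}, part (2), taking the parameter space $\mathcal{A}:=\mathbb{R}^{d-1}$ (so $m=d-1$) and the family $\mathscr{C}_{\mathcal{A}}:=\{\varphi_\alpha^g\}_{\alpha\in\mathbb{R}^{d-1}}$. Each $\varphi_\alpha^g(x',t)=\langle\alpha,x'\rangle+(t-g(x'))^2$ is $C^2$ on $\mathbb{R}^d$ because $g$ is $C^2$, so the family satisfies the standing hypotheses of that proposition.

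The only hypothesis to check is that every affinely independent $d$-tuple $(\alpha_1,\ldots,\alpha_d)\in(\mathbb{R}^{d-1})^d$ is admissible. This is exactly the content of Proposition~\ref{prop:prescribedfoldhypersurface-esoteric}:
\begin{itemize}
\item the critical set $\Sigma_{\boldsymbol{\alpha}}^g$ is the graph of $g$, which is a $C^2$ (hence $C^1$) embedded hypersurface;
\item every critical point is fold non-degenerate;
\item Lemma~\ref{lem:foldimpliestangentialimmersion-genexample}, applied with $U=\mathbb{R}^d$, then supplies the tangential-immersion condition along this graph.
\end{itemize}
Hence every affinely independent tuple lies in $\mathfrak{G}$.

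Proposition~\ref{prop:parameterizedexceptionalset-genexample}(2) then gives an affine subspace $V_E\subseteq\mathbb{R}^{d-1}$ with $\mathcal{B}_E^g\subseteq V_E$ and $\dim V_E\le d-2$. The mechanism is as follows. If $\Aff(\mathcal{B}_E^g)$ had dimension $d-1$, we could pick $d$ affinely independent bad parameters. That admissible tuple would contradict Theorem~\ref{thm:dcurvesrefined-genexample}, which guarantees a good witness among any admissible tuple.

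There is no substantive obstacle. The one point worth noting is a dimensional coincidence: $\mathbb{R}^{d-1}$ is exactly large enough to contain $d$ affinely independent points. So the bound $d-2$ means only that $\mathcal{B}_E^g$ is a proper affine subset of the parameter space, i.e.\ the conclusion is just non-fullness of $\Aff(\mathcal{B}_E^g)$. The empty case is handled as in the proposition, by taking any point for $V_E$.
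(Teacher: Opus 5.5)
Your proposal is correct and matches the paper's proof: Proposition~\ref{prop:prescribedfoldhypersurface-esoteric} makes every affinely independent $d$-tuple in $\mathbb{R}^{d-1}$ admissible, and Proposition~\ref{prop:parameterizedexceptionalset-genexample}(2) then gives the affine subspace of dimension at most $d-2$. Your remark is also accurate: since the parameter space has dimension $m=d-1$, this bound only says that $\Aff(\mathcal{B}_E^g)$ is a proper affine subspace of $\mathbb{R}^{d-1}$.
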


\begin{proof}
Proposition~\ref{prop:prescribedfoldhypersurface-esoteric} shows that every
affinely independent $d$-tuple in the parameter space
$\mathbb{R}^{d-1}$ is admissible. Proposition~\ref{prop:parameterizedexceptionalset-genexample} therefore applies.\qedhere
\end{proof}

The $C^2$ function $g : \mathbb{R}^{d-1} \rightarrow \mathbb{R}$ is arbitrary. Thus the critical set may have any geometric description allowed under $C^2$ graphs. The key idea is that the determinant calculation is entirely unchanged: the missing
direction is always the vertical direction, and the determinant crosses the critical graph linearly in that direction. This example isolates the differential-geometric content of the fold non-degeneracy hypothesis from any additional geometric context: the argument requires only $C^2$ regularity of $g$.

\subsection{A comparison of methods}
\label{subsec:comparisonofmethods-esoteric}

The concrete arguments for pinned distances and radial projections, together with the general results of the present paper, are organized around the same canonical encoding principle: one combines $d$ scalar mappings into the canonical encoding map $\mathsf{H}:=(\varphi_1,\ldots,\varphi_d)$. On the non-critical region, the Inverse Function Theorem makes $\mathsf{H}$ locally bilipschitz, after which Federer's Projection Theorem applies to its coordinate functions; see Proposition~\ref{prop:manifoldinversefunction-diffgeo}. The ingredient concerning the geometry of the critical region is as follows. If the critical set is a $C^1$ hypersurface and $D_x\mathsf{H}$ has rank $d-1$ on its tangent space, the argument can instead be performed intrinsically along that hypersurface, by Proposition~\ref{prop:restrictedrankequivalences-diffgeo}. Fold non-degeneracy is the stronger condition which makes this geometry stable under perturbations of the parameters; its differential-geometric content is summarized by Proposition~\ref{prop:kerneltransversality-diffgeo}.

The pinned-distance calculation in Section~\ref{sec:pinneddistances-examples} is the clean and canonical model in which the critical set is a hyperplane. The framework developed in the present paper treats the portion lying in the critical hyperplane as well, since the tangential-immersion condition supplies the required lower-dimensional change of variables. It also separates the differential-geometric calculation for one tuple from the affine argument controlling the exceptional parameter set.

The radial-projection calculation in Section~\ref{sec:radialprojections-examples} illustrates a different possibility. Along the line joining the two centres, the missing derivative direction is tangent to the critical set, so the tangential-immersion condition fails. The hypothesis requiring positive length away from that line is exactly what allows one to remain in the non-critical branch of the method.

Thus invertibility controls the regular region, tangential rank controls the critical region, and the fold non-degeneracy condition controls stability as one varies parameters. This spatial method is complementary to Peres--Schlag transversality \cite{PS2000}, which differentiates a normalized two-point difference in the parameter variable and yields almost-everywhere and exceptional-dimension estimates. The present method differentiates the determinant of a finite encoding in a spatial kernel direction and gives the deterministic conclusion that an admissible finite tuple must contain a good witness.

\vfill
\pagebreak

\appendix

\section{Differential geometry of curve projections}
\label{sec:differentialgeometry-diffgeo}

This appendix collects the differential-geometric facts used in Sections~\ref{sec:generalizedcurveprojections-genexample} and \ref{sec:exoticcurveprojections-esoteric}. In particular, it makes precise the critical-hypersurface and tangential-immersion conditions appearing in Definition~\ref{def:criticalhypersurfacetangentialimmersion-genexample}, explains the relation between tangential immersion, nonzero minors, and kernel transversality, and records the manifold form of the Inverse Function Theorem used in the proof of Theorem~\ref{thm:dcurvesrefined-genexample}. The conventions for embedded submanifolds and tangent spaces follow \cite{Lee2013}.

\subsection{Embedded submanifolds and restricted rank}
\label{subsec:manifoldnotation-diffgeo}

Let $M\subseteq\mathbb{R}^d$ be a $C^1$ embedded submanifold of dimension $k\in\{1,\ldots,d-1\}$, and let $p\in M$. We identify the tangent space $T_pM$ with the $k$-dimensional linear subspace
$$
T_pM
:=
\big\{
\gamma'(0):
\gamma:(-\varepsilon,\varepsilon)\rightarrow M
\text{ is }C^1,\ \gamma(0)=p
\big\}
\subseteq\mathbb{R}^d.
$$
Its dual space is denoted by $(T_pM)^*$, and always satisfies
$$
\dim T_pM=\dim(T_pM)^*=k.
$$

If $\mathsf{H}:\mathbb{R}^d\rightarrow\mathbb{R}^{\ell}$ is $C^1$, then its derivative restricted to $M$ at $p$ is
$$
D_x\mathsf{H}(p)\vert_{T_pM}
:
T_pM\longrightarrow\mathbb{R}^{\ell}.
$$
The rank of this restricted map always satisfies
$$
\rank\big(D_x\mathsf{H}(p)\vert_{T_pM}\big)
=
\dim D_x\mathsf{H}(p)(T_pM)
\leq k.
$$
If $\varphi:\mathbb{R}^d\rightarrow\mathbb{R}$ is $C^1$, then its restricted differential is the covector
$$
D_x\varphi(p)\vert_{T_pM}
\in(T_pM)^*,
\text{ where }
D_x\varphi(p)[v]
=
\langle\nabla\varphi(p),v\rangle,
\qquad v\in T_pM.
$$

We first record the manifold form of the Inverse Function Theorem used in the proof of Theorem~\ref{thm:dcurvesrefined-genexample}; see \cite{Lee2013} for a standard reference.

\begin{proposition}[Manifold Inverse Function Theorem]
\label{prop:manifoldinversefunction-diffgeo}
Let $M$ and $N$ be $C^1$ manifolds of the same dimension $k$, and let $F:M\rightarrow N$ be a $C^1$ map. If
$$
DF(p):T_pM\rightarrow T_{F(p)}N
$$
is an isomorphism, there exist neighbourhoods $p\in U\subset M$ and $F(p)\in V\subset N$ such that
$
F\vert_U:U\rightarrow V
$
is a $C^1$ diffeomorphism.

Moreover, if $M$ and $N$ are embedded in Euclidean spaces, then $U$ and $V$ may be chosen simultaneously of small diameter so that this local diffeomorphism and its inverse are Lipschitz with respect to the ambient Euclidean metrics.
\end{proposition}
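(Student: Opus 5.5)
The plan is to reduce the statement to the Euclidean Inverse Function Theorem via charts, and then to obtain the Lipschitz refinement from continuity of the derivative on a small convex chart neighbourhood.

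First, since $M$ and $N$ are $C^1$ manifolds of dimension $k$, I choose $C^1$ charts $\psi:M\supseteq U_0\to\mathbb{R}^k$ around $p$ and $\chi:N\supseteq V_0\to\mathbb{R}^k$ around $F(p)$, shrinking $U_0$ so that $F(U_0)\subseteq V_0$. The local representative
$$
\widehat F:=\chi\circ F\circ\psi^{-1}
$$
is then a $C^1$ map between open subsets of $\mathbb{R}^k$. By the chain rule, $D\widehat F(\psi(p))$ is the composition of $DF(p)$ with the isomorphisms $D\psi^{-1}$ and $D\chi$, so it is invertible. The classical Inverse Function Theorem in $\mathbb{R}^k$ gives open sets on which $\widehat F$ is a $C^1$ diffeomorphism. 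Pulling these back through the charts yields neighbourhoods $U\ni p$ and $V\ni F(p)$ with $F\vert_U:U\to V$ a $C^1$ diffeomorphism; the inverse is $\psi^{-1}\circ\widehat F^{-1}\circ\chi$, which is $C^1$.

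For the Lipschitz refinement in the embedded case, I would argue as follows.

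\begin{enumerate}
\item Take $\psi$ to be the restriction of an orthogonal projection onto $T_pM$, and similarly for $N$. By the embedded $C^1$ structure, $\psi^{-1}$ is a $C^1$ graph parametrization over a small ball, so both $\psi$ and $\psi^{-1}$ are Lipschitz there, with constants close to $1$. The same choice is made for $\chi$ on $N$.
\item Shrink to a closed ball $\overline{B}$ around $\psi(p)$ on which $D\widehat F$ and $D(\widehat F^{-1})$ are continuous, and hence bounded.
\item Convexity of the ball, together with the mean value inequality, gives a Lipschitz bound for $\widehat F$ on $B$.
\item Choose a ball around $\chi(F(p))$ inside $\widehat F(B)$ and apply the same argument to $\widehat F^{-1}$.
\item Composing with the Lipschitz charts, and restricting $U$ and $V$ to the corresponding preimages, gives ambient Lipschitz bounds for $F\vert_U$ and its inverse.
\end{enumerate}

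The main obstacle is step 1. The ambient Euclidean metric on $M$ must be comparable to the chart metric, so that the mean value estimate, which is valid in chart coordinates, transfers to ambient distances. The graph representation of an embedded $C^1$ submanifold over its tangent plane supplies exactly this bilipschitz comparison on sufficiently small neighbourhoods, and this is the point where embeddedness, rather than mere immersion, is needed.
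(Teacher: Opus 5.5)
Your proposal is correct and follows essentially the same route as the paper. Both arguments reduce to the Euclidean Inverse Function Theorem via a coordinate representation, then obtain the Lipschitz bounds from bounded derivatives on small convex coordinate balls together with the mean value inequality. Your choice of orthogonal-projection (graph) charts is a useful refinement: it makes explicit the comparison between chart distances and ambient distances on $M$ and $N$, which the paper's proof leaves implicit when it simply asserts that the charts have bounded derivatives.
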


\begin{proof}
The first conclusion is the ordinary Inverse Function Theorem applied to a coordinate representation of $F$. After restricting the coordinate domains to sufficiently small relatively compact neighbourhoods, the derivatives of the coordinate representation and its inverse are bounded. The corresponding coordinate charts and their inverses also have bounded derivatives on these neighbourhoods. After choosing convex coordinate neighbourhoods, the Mean Value Theorem therefore gives finite Lipschitz constants for $F\vert_U$ and $(F\vert_U)^{-1}$.\qedhere
\end{proof}

The following linear-algebraic reformulation isolates the exact calculation which is used repeatedly in Section~\ref{sec:generalizedcurveprojections-genexample}.

\begin{proposition}[Equivalent Forms of Tangential Immersion]
\label{prop:restrictedrankequivalences-diffgeo}
Let
$
\mathsf{H} = (\varphi_1,\ldots,\varphi_d) : \mathbb{R}^d\rightarrow\mathbb{R}^{d}
$
be any $C^1$ mapping. Suppose that $M \subset \mathbb{R}^d$ is an embedded $k$-dimensional $C^1$ submanifold, and let $p \in M$.

The following conditions are equivalent.
\begin{enumerate}
    \item The restricted derivative of $\mathsf{H}$ has maximal rank when restricted to the tangent space of $M$ at $p$:
    $$
    \rank\big(D_x\mathsf{H}(p)\vert_{T_pM}\big)=k.
    $$

    \item The restricted covectors
    $$
    D_x\varphi_1(p)\vert_{T_pM},\ldots,
    D_x\varphi_d(p)\vert_{T_pM}
    $$
    span $(T_pM)^*$.
    \medskip
    \item For some index set
    $
    I=\{i_1,\ldots,i_k\} \subseteq\{1,\ldots,d\},
    $
    the associated restricted derivative
    $$
    D\big(\mathsf{H}^I\vert_M\big)(p)
    :
    T_pM\rightarrow\mathbb{R}^k,
    \qquad
    \mathsf{H}^I
    :=
    (\varphi_{i_1},\ldots,\varphi_{i_k}),
    $$
    is a linear isomorphism onto $\mathbb{R}^k$.
    \medskip
    \item If $\theta:W\subseteq\mathbb{R}^k\rightarrow M$ is any local parametrization satisfying $\theta(0)=p$, then the associated $d\times k$ derivative matrix
    $$
    D(\mathsf{H}\circ\theta)(0)
    =
    D_x\mathsf{H}(p)D\theta(0)
    $$
    has an invertible $k\times k$ minor.
\end{enumerate}
Whenever these conditions hold, the successful mapping $\mathsf{H}^I\vert_M$ is a local $C^1$ diffeomorphism and is locally bilipschitz after its domain is restricted.
\end{proposition}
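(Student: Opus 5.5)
The plan is to reduce everything to linear algebra on the $k$-dimensional space $T_pM$ and then invoke Proposition~\ref{prop:manifoldinversefunction-diffgeo} for the final assertion. Set $A:=D_x\mathsf{H}(p)\vert_{T_pM}:T_pM\rightarrow\mathbb{R}^d$. Its rows are the restricted covectors $\ell_j:=D_x\varphi_j(p)\vert_{T_pM}\in(T_pM)^*$, in the sense that $A v=(\ell_1(v),\ldots,\ell_d(v))$. The whole argument then rests on the elementary fact that row rank equals column rank.

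I would establish the equivalences in the following order.

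\begin{enumerate}
\item For (1)$\Leftrightarrow$(2), the rank of $A$ equals $\dim\vspan\{\ell_1,\ldots,\ell_d\}$. Indeed, the annihilator of this span is exactly $\ker A$, so $\dim\vspan\{\ell_j\}=k-\dim\ker A=\rank A$. Hence $\rank A=k$ if and only if the $\ell_j$ span $(T_pM)^*$.

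\item For (2)$\Rightarrow$(3), any spanning set of a $k$-dimensional space contains a basis $\ell_{i_1},\ldots,\ell_{i_k}$. The map $v\mapsto(\ell_{i_1}(v),\ldots,\ell_{i_k}(v))$ is exactly $D(\mathsf{H}^I\vert_M)(p)$, using the chain rule along curves in $M$. A linear map $T_pM\rightarrow\mathbb{R}^k$ whose component functionals form a basis of the dual has trivial kernel, so it is an isomorphism.

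\item For (3)$\Rightarrow$(1), the rows of $A$ contain $k$ independent rows, so $\rank A\geq k$. Combined with the trivial bound $\rank A\leq k$, this gives equality.

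\item For (1)$\Leftrightarrow$(4), I would use that a local parametrization has injective $D\theta(0)$ with image $T_pM$. Then $D_x\mathsf{H}(p)D\theta(0)=A\circ D\theta(0)$ has the same rank as $A$. A $d\times k$ matrix has rank $k$ if and only if some $k\times k$ minor is nonzero. The rows selected by that minor correspond to the index set $I$ in (3), which identifies the two conditions directly.
\end{enumerate}

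For the final assertion, I would apply Proposition~\ref{prop:manifoldinversefunction-diffgeo} to $F:=\mathsf{H}^I\vert_M:M\rightarrow\mathbb{R}^k$, which is a $C^1$ map between $k$-manifolds. Its differential at $p$ is an isomorphism by (3), so it is a local $C^1$ diffeomorphism. The Lipschitz clause of that proposition then gives bilipschitz behaviour on a small neighbourhood.

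The only point needing care is the identification of $D(\mathsf{H}^I\vert_M)(p)$ with the restriction of the ambient derivative to $T_pM$. I would justify this by differentiating $\mathsf{H}^I\circ\gamma$ for $C^1$ curves $\gamma$ in $M$ through $p$, using the definition of $T_pM$ recalled above. I expect no genuine obstacle beyond this bookkeeping.
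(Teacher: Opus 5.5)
Your proposal is correct and follows essentially the same route as the paper. Everything reduces to the rank of the $d\times k$ matrix of restricted covectors, the nonzero-minor criterion produces the index set $I$, and Proposition~\ref{prop:manifoldinversefunction-diffgeo} gives the final local bilipschitz diffeomorphism. The difference is only bookkeeping: the paper works directly with $D(\mathsf{H}\circ\theta)(0)$, whose rows represent the restricted covectors in the basis supplied by $D\theta(0)$, while you argue intrinsically on $T_pM$ via annihilators and pass to coordinates only for (4). You also make explicit the chain-rule identification of $D(\mathsf{H}^I\vert_M)(p)$ with the restricted ambient derivative, which the paper leaves implicit.
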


\begin{proof}
The rows of $D(\mathsf{H}\circ\theta)(0)$ are the coordinate representations of the covectors
$$
D_x\varphi_j(p)\vert_{T_pM},
\qquad j=1,\ldots,d,
$$
with respect to the basis of $T_pM$ supplied by $D\theta(0)$. Thus conditions (1), (2), and (4) all assert that this matrix has rank $k$. A matrix with $k$ columns has rank $k$ if and only if one of its $k\times k$ minors is nonzero. Choosing the rows in that minor gives the index set $I$ in condition (3), and the converse is immediate. The final conclusion is Proposition~\ref{prop:manifoldinversefunction-diffgeo}.\qedhere
\end{proof}

\subsection{Critical hypersurfaces and kernel transversality}
\label{subsec:hypersurfacetransversality-diffgeo}
We now record the relationship between the critical-hypersurface condition and kernel transversality for the associated derivative.

Suppose that $J:U\rightarrow\mathbb{R}$ is $C^1$ on an open set $U\subseteq\mathbb{R}^d$, that $J(p)=0$, and that $D_xJ(p)\neq0$. The Regular Level Set Theorem gives a neighbourhood $p\in W\subseteq U$ such that
$$
M:=J^{-1}(0)\cap W
$$
is a $C^1$ hypersurface. Its tangent space is given exactly by
\begin{equation}\label{eq:regularleveltangent-diffgeo}
T_pM
=
\ker D_xJ(p)
=
\big\{
v\in\mathbb{R}^d:
\langle\nabla J(p),v\rangle=0
\big\}.
\end{equation}
This is the form of the Regular Level Set Theorem used for the critical hypersurfaces in Section~\ref{subsec:foldstability-genexample}; see \cite{Lee2013}.

\begin{proposition}[Kernel Transversality Criterion]
\label{prop:kerneltransversality-diffgeo}
Let $U\subseteq\mathbb{R}^d$ be open, let $\mathsf{H}:U\rightarrow\mathbb{R}^d$ be $C^2$, and define
$
J(x):=\det D_x\mathsf{H}(x).
$
Suppose that $x_0\in U$ is a fixed point which satisfies
$$
\rank D_x\mathsf{H}(x_0)=d-1,
\qquad
D_xJ(x_0)\neq0.
$$
After shrinking to a neighbourhood $W$ of $x_0$, let
$$
M:=J^{-1}(0)\cap W,
$$
which is a $C^1$ hypersurface by the Regular Level Set Theorem.
The following conditions are equivalent.
\begin{enumerate}
\item The restricted derivative map at $x_0$ has maximal rank:
$$
\rank\big(D_x\mathsf{H}(x_0)\vert_{T_{x_0}M}\big)=d-1.
$$
\item The kernel of the derivative matrix at $x_0$ intersects the tangent space of $M$ at $x_0$ transversely:
$$
\ker D_x\mathsf{H}(x_0)\cap T_{x_0}M=\{0\}.
$$
\item The fold non-degeneracy condition for spatial determinants holds at $x_0$:
\[
D_xJ(x_0)[v]\neq0,
\qquad \forall v \in \ker D_x\mathsf{H}(x_0) \setminus \{0\}.
\]
\end{enumerate}
\end{proposition}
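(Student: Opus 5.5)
The plan is to reduce all three conditions to statements about the one-dimensional kernel $K:=\ker D_x\mathsf{H}(x_0)$ and the hyperplane $T_{x_0}M=\ker D_xJ(x_0)$. We are given $\rank D_x\mathsf{H}(x_0)=d-1$, so rank--nullity gives $\dim K=1$. Since $D_xJ(x_0)\neq0$, the Regular Level Set Theorem yields the identity \eqref{eq:regularleveltangent-diffgeo}, so $T_{x_0}M=\ker D_xJ(x_0)$ is a hyperplane. With these two facts recorded, each equivalence becomes elementary linear algebra.

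For (1)$\Leftrightarrow$(2), apply rank--nullity to the restriction $D_x\mathsf{H}(x_0)\vert_{T_{x_0}M}$. The kernel of this restricted map is exactly $K\cap T_{x_0}M$, and its domain has dimension $d-1$. Hence
$$
\rank\big(D_x\mathsf{H}(x_0)\vert_{T_{x_0}M}\big)=d-1-\dim\big(K\cap T_{x_0}M\big),
$$
and this rank equals $d-1$ precisely when $K\cap T_{x_0}M=\{0\}$.

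For (2)$\Leftrightarrow$(3), use $T_{x_0}M=\ker D_xJ(x_0)$. A nonzero $v\in K$ lies in $T_{x_0}M$ exactly when $D_xJ(x_0)[v]=0$. So (3), which says no nonzero kernel vector is annihilated by $D_xJ(x_0)$, is literally the statement $K\cap T_{x_0}M=\{0\}$. By linearity and $\dim K=1$, it is enough to check one spanning vector of $K$, but this reduction is not needed for the equivalence.

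The only real subtlety is the hypotheses. Condition (3) alone would already force $D_xJ(x_0)\neq0$, but the tangent-space identity requires that nonvanishing, which is why it is assumed. The $C^2$ assumption on $\mathsf{H}$ makes $J$ a $C^1$ function, so the Regular Level Set Theorem applies and $D_xJ$ makes sense. I expect no genuine obstacle beyond recording these points carefully.
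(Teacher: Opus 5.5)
Your proof is correct and follows the paper's argument essentially verbatim. Rank--nullity on the restriction to the $(d-1)$-dimensional space $T_{x_0}M$ gives (1)$\Leftrightarrow$(2), and the level-set identity $T_{x_0}M=\ker D_xJ(x_0)$ turns (2) into (3); both you and the paper use implicitly that $J(x_0)=0$, which follows from $\rank D_x\mathsf{H}(x_0)=d-1$ and ensures $x_0\in M$.
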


\begin{proof}
Since $\rank D_x\mathsf{H}(x_0)=d-1$, the Rank-Nullity theorem implies that $\ker D_x\mathsf{H}(x_0)$ is a one-dimensional vector space. The restriction to the $(d-1)$-dimensional space $T_{x_0}M$ therefore has rank $d-1$ if and only if it is injective, which is equivalent to requiring that
$$
\ker D_x\mathsf{H}(x_0)\cap T_{x_0}M=\{0\}.
$$
Identity \eqref{eq:regularleveltangent-diffgeo} then converts the latter condition into
$$
D_xJ(x_0)[v]\neq0
$$
for every $v\in\ker D_x\mathsf{H}(x_0)\setminus \{0\}$.\qedhere
\end{proof}

In particular, if $J=\det D_x\mathsf{H}$, the last displayed condition is exactly the fold kernel-transversality condition \eqref{eq:foldkerneltransversality-genexample}. Proposition~\ref{prop:kerneltransversality-diffgeo} therefore explains directly why fold non-degeneracy implies the tangential-immersion condition.

\bibliographystyle{abbrv}
\bibliography{refs}
\end{document}